\documentclass[11pt]{article}

\usepackage[backend=biber,style=alphabetic]{biblatex}
\usepackage[utf8]{inputenc}
\usepackage[T1]{fontenc}

\usepackage[english]{babel}
\usepackage{amsthm,amssymb,amsmath,amsfonts,mathrsfs,amscd,stmaryrd}
\usepackage{verbatim}
\usepackage{dsfont}
\usepackage[letterpaper,top=2cm,bottom=2cm,left=3cm,right=3cm,marginparwidth=1.75cm]{geometry}
\usepackage{comment}
\usepackage{extarrows}

\usepackage{url}
\usepackage{amsmath,mathtools}
\usepackage{graphicx}
\usepackage[colorlinks=true, allcolors=blue]{hyperref}
\usepackage{tikz-cd}
\usepackage{bbold}

\newcommand{\C}{\mathbb{C}}
\renewcommand{\O}{\mathcal{O}}
\newcommand{\K}{\mathcal{K}}
\newcommand{\Z}{\mathbb{Z}}

\renewcommand{\S}{\mathcal{S}}
\newcommand{\B}{\mathcal{B}}

\newcommand{\R}{\mathcal{R}}

\renewcommand{\H}{\textbf{H}}
\newcommand{\Hsph}{\textbf{H}^\text{sph}}

\renewcommand{\P}{\mathbb{P}}
\newcommand{\g}{\mathfrak{g}}

\renewcommand{\t}{\mathfrak{t}}

\newcommand{\tG}{\mathbb{G}}
\newcommand{\bbG}{\mathbb{G}}
\newcommand{\bbB}{\mathbb{B}}

\newcommand{\X}{\mathbb{X}}
\renewcommand{\th}{\mathbb{h}}
\newcommand{\bbh}{\mathbb{h}}
\newcommand{\tW}{\mathbb{W}}
\newcommand{\tT}{\mathbb{T}}
\newcommand{\tB}{\mathbb{B}}
\newcommand{\tf}{\widetilde{f}}
\newcommand{\tPi}{\widetilde{\Pi}}

\newcommand{\HBM}[2]{H_\bullet^{#1}(#2)}

\newcommand{\Crot}{\C^*_{\text{rot}}}
\newcommand{\Cdil}{\C^*_{\text{dil}}}

\newcommand{\tM}{\widetilde{M}}

\newcommand{\Fl}{\mathcal{F}l}

\newcommand{\Hom}{\text{Hom}}
\newcommand{\End}{\text{End}}

\newcommand{\Res}{\text{Res}}

\newcommand{\Waff}{W_{\text{aff}}}

\newcommand{\pt}{\text{pt}}

\newcommand{\bbPhire}{\mathbb{\Phi}^\text{re}}
\newcommand{\ddH}{\overset{\bullet\bullet}{\mathrm{H}}}

\newcommand{\sw}{\operatorname{sw}}

\renewcommand{\subset}{\subseteq}

\renewcommand{\Im}{\text{Im}}

\newtheorem{theorem}{Theorem}[section]
\newtheorem{lemma}[theorem]{Lemma}
\newtheorem{proposition}[theorem]{Proposition}

\newtheorem{remark}[theorem]{Remark}
\newtheorem{corollary}[theorem]{Corollary}

\theoremstyle{definition}
\newtheorem{definition}[theorem]{Definition}
\newtheorem{definition-lemma}[theorem]{Definition-Lemma}

\numberwithin{equation}{section}

\begin{document}

\title{Convolution algebras associated to representations}

\date{}
\author{Dragoș Crișan \\
  \small Department of Mathematics, University of Chicago \\
  \small \texttt{dcrisan@uchicago.edu}}

\maketitle

\begin{abstract}
Given a complex reductive group $G$, a representation $V$ of $G$ and a Borel-stable subspace $M \subset V$, we consider the associated Steinberg-type variety $Z$. We prove that, under a certain condition on $(V,M)$, called gluability, the equivariant Borel-Moore homology or $K$-theory of $Z$, equipped with the convolution product, is obtained as the intersection of two copies of the nil-Hecke algebra inside its localization. We also provide a description of these new algebras in terms of poles and residues. Similar results are obtained when $G$ is replaced by its loop group. This generalizes results of Ginzburg, Kapranov and Vasserot describing the affine Hecke algebra and DAHA, as well as a result of Teleman and Gannon--Webster that realizes certain Coulomb branches by gluing two copies of the universal centralizer.
\end{abstract}

\section{Introduction}
    
Let $G$ be a complex reductive group, with a simply connected derived subgroup. Fix a maximal torus and a Borel subgroup $T\subset B$ and let $W$ be the Weyl group. Let $\B \cong G/B$ be the flag variety of $G$. Let $S=\C[\t][\hbar]$, $Q=\text{Frac}(S)$ and $Q[W] = Q \rtimes \C[W]$. The usual action of $W$ on $\t^*$ extends to an action of $Q[W]$ on $Q$. Let $\H$ be the subalgebra of operators in $Q[W]$ which preserve $S\subset Q$. Proposition 2.3 in \cite{GKV} shows that $\H \cong \End_{S^W}(S)$. The algebra $\H$ is a version of the nil-Hecke algebra, and we have an algebra isomorphism $\H \cong \HBM{G\times \C^*}{\B \times \B}$, where $\C^*$ acts trivially.

Let $V$ be a finite dimensional representation of $G$ and let $M\subset V$ be a $B$-stable subspace. We call the pair $(V,M)$ gluable if, for all $w\in W$, the $T$-representations $M/(M \cap wM)$ and $wM / (M \cap wM)$ have no common weights. A more general definition of gluability is discussed in Definition \ref{finite_gluability_def} and Remark \ref{finite_gluability_remark}. This assumption holds, for example, when the intersection of $M$ with any weight space of $V$ is either trivial or the full weight space. Let $\Pi_M \in S$ be the product of weights of the $T\times \C^*$-representation $M$, counted with multiplicity. Here, $\C^*$ acts by dilations on $V$.

Define the vector bundle $\tM = G\times^B M \rightarrow G/B$ and the Steinberg-type variety given by $Z = \tM \times_V \tM$. Consider the equivariant Borel-Moore homology $\HBM{G\times \C^*}{Z}$, equipped with the convolution product. The main result of this paper is:

\begin{theorem}[Theorem \ref{finite_H(Z)=H_M}]
\label{intro_theorem}
If the pair $(V,M)$ is gluable, then there exists an isomorphism of algebras
    $$\HBM{G\times \C^*}{Z} \cong \H \cap \Pi_M^{-1} \H \Pi_M$$
\end{theorem}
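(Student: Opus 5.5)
The plan is the standard localization argument. First I reduce $G\times\C^*$-equivariance to $T\times\C^*$-equivariance on a single fiber. Since $Z\subset \tM\times\tM$ lies over $\B\times\B$, induction identifies
$$\HBM{G\times \C^*}{Z}\cong \HBM{B\times\C^*}{Z_1}\cong \HBM{T\times \C^*}{Z_1},$$
where $Z_1=\{(m,gB,m')\}$ is the fiber over the base point $B$ in the first factor.

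Next I stratify $Z$ by the $G$-orbits $O_w\subset\B\times\B$ (Bruhat cells). The piece $Z_w$ over $O_w$ is a vector bundle over $O_w$ with fiber $M\cap wM$. Since all these pieces have even, pure homology, the long exact sequences split. Hence $\HBM{G\times\C^*}{Z}$ is a free left $S$-module with a basis $[\overline{Z_w}]$ indexed by $w\in W$. In particular it is torsion-free, so the localization map
$$\HBM{G\times\C^*}{Z}\to \HBM{T\times\C^*}{Z^T}\otimes Q=\bigoplus_w Q$$
is injective. The $T$-fixed points of $Z$ are the pairs $(0,wB,w'B)$, and convolution on the localized side becomes the multiplication of $Q[W]$ once the points are normalized correctly. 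Concretely, I use the embedding $\tM\hookrightarrow \B\times V$ and push forward along $Z\hookrightarrow \tM\times\tM$ into $\HBM{}{\tM\times\tM}\cong \HBM{}{\B\times\B}=\H$, with the standard Euler class twists. This realizes $\HBM{}{Z}$ as a subalgebra of $Q[W]$. The twist gives the conjugation by $\Pi_M$: in the two natural ways of pushing $Z$ into $\tM\times\tM$ (Thom isomorphism on the left factor versus the right factor), the fixed-point contributions differ by $e(M)=\Pi_M$ on one side. So the image lands in $\H$ under one normalization and in $\Pi_M^{-1}\H\Pi_M$ under the other. This is the pattern in GKV and Vasserot for the affine Hecke algebra.

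It remains to prove the reverse inclusion $\H\cap\Pi_M^{-1}\H\Pi_M\subset \operatorname{Im}$. I compute the localization of the basis classes $[\overline{Z_w}]$. Their $w$-component is the Euler class of the normal data, namely $\prod(\text{weights of }M/(M\cap wM))^{-1}$ times the nil-Hecke leading term; it is triangular in the Bruhat order. Given an element $a=\sum a_w w$ in the intersection, I take a maximal $w$ in its support. The two membership conditions say that $a_w$ is divisible both by the weights of $M/(M\cap wM)$ (from one copy of $\H$) and by the weights of $wM/(M\cap wM)$ (from the conjugate copy). Gluability says these two sets of weights share no factor, so their product divides $a_w$ in $S$, because $S$ is a UFD and distinct weights are coprime. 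This is exactly what is needed to subtract $S\cdot[\overline{Z_w}]$ and induct downward.

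I expect the main obstacle to be this divisibility step: pinning down exactly which weight product the leading coefficient of an element of $\H\cap\Pi_M^{-1}\H\Pi_M$ must be divisible by. This requires a careful description of $\H$ through the residue/pole conditions along the hyperplanes $\alpha=0$ and their $\Pi_M$-conjugates. Gluability, applied with multiplicities, is what makes the two divisibilities combine.
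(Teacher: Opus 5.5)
Your overall plan follows the paper's. You use the free $S$-basis $\{[\overline{Z_w}]\}$, an injective algebra map into $Q[W]$ that is Bruhat-triangular on this basis, containment of the image in $\H\cap\Pi_M^{-1}\H\Pi_M$, and downward induction on the Bruhat order. For the containment you compare the two one-sided Gysin restrictions to $\B\times\B$, one through $\B\times\tM$ and one through $\tM\times\B$, which differ by conjugation by $\Pi_M$. That is a legitimate alternative; it is the picture in the Remark following Lemma~\ref{affine_Image_in_HM}. The paper's proof instead uses only $\theta_0$. It gets the second condition from the fact that convolution commutes with $\varepsilon_*\colon\HBM{G}{\B}\to\HBM{G}{\tM}$, whose image is $\Pi_M S$, combined with Proposition~\ref{H_Sigma_is_normalizer}. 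Note that the plain pushforward to $\tM\times\tM$ followed by the Thom isomorphism is $z\mapsto\Pi_M\,\theta_0(z)$, which is not multiplicative.

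The genuine gap is in the step you flag as the main obstacle: as you state it, it is false. Take $\theta_0=\theta\circ i^*\circ p_*$, the normalization whose image is $\H\cap\Pi_M^{-1}\H\Pi_M$. Lemma~\ref{finite_leading_term_ZW} gives $\theta_0([\overline{Z_w}])=\frac{\Pi_{wM}}{\Pi_{M\cap wM}}D_w+\sum_{y<w}b_{w,y}D_y$. The Gysin restriction multiplies by the Euler class of the normal bundle $wM/(M\cap wM)$, not by an inverse Euler class; an inverse is impossible, since the image lies in $\H$.

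Now let $f=g_wD_w+\sum_{c\not\geq w}g_cD_c$ lie in the intersection, and compare coefficients of $[w]$. This uses that $\{D_c\}$ is an $S$-basis of $\H$ and that $D_w$ is triangular with leading term a unit multiple of $[w]/\prod_{\beta\in\Phi_+\cap w\Phi_-}\beta$.
\begin{itemize}
\item Membership in $\H$ gives only $g_w\in S$, with no divisibility by $\Pi_M/\Pi_{M\cap wM}$.
\item Membership in $\Pi_M^{-1}\H\Pi_M$ gives $g_w\Pi_M/\Pi_{wM}\in S$, i.e.\ $\tPi_{wM,M}\mid g_w$, where $\tPi_{wM,M}=\Pi_{wM}/\gcd(\Pi_M,\Pi_{wM})$.
\end{itemize}
The product of your two weight sets does not divide $g_w$ in general. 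Indeed, $\theta_0([\overline{Z_w}])$ itself lies in the intersection, and its leading coefficient $\Pi_{wM}/\Pi_{M\cap wM}$ is not divisible by $\frac{\Pi_M}{\Pi_{M\cap wM}}\cdot\frac{\Pi_{wM}}{\Pi_{M\cap wM}}$ as soon as $wM\neq M$.

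Gluability therefore enters differently from how you describe it. One always has $\Pi_{M\cap wM}\mid\gcd(\Pi_M,\Pi_{wM})$, so $\tPi_{wM,M}$ divides $\Pi_{wM}/\Pi_{M\cap wM}$. Gluability is exactly the condition $\gcd(\Pi_M,\Pi_{wM})=\Pi_{M\cap wM}$. Equivalently, the divisor $\tPi_{wM,M}$ forced on every element of the intersection equals the leading coefficient of $\theta_0([\overline{Z_w}])$. That equality is what lets you subtract $(g_w/\tPi_{wM,M})\,\theta_0([\overline{Z_w}])$ and induct, as in Lemma~\ref{correct_leading_term} and Proposition~\ref{H_Sigma_basis}. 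Without it, the forced divisibility is weaker than the leading coefficient and the induction stalls.

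Your picture of ``two coprime sets, hence their product'' is correct only in the symmetric normalization $z\mapsto\Pi_M\theta_0(z)$, whose image is $\Pi_M\H\cap\H\Pi_M$. There the two memberships force divisibility by all of $\Pi_M$ and all of $\Pi_{wM}$, not just by the weights of $M/(M\cap wM)$ and $wM/(M\cap wM)$. The basis class has leading coefficient $\Pi_M\Pi_{wM}/\Pi_{M\cap wM}$, and gluability says exactly that this equals $\operatorname{lcm}(\Pi_M,\Pi_{wM})$. In your version you dropped the common factor $\Pi_{M\cap wM}$.

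Finally, no residue description of $\H$ is needed for this step; the $D$-basis and the leading term of $D_w$ suffice.
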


Here $\Pi_M^{-1}  \H \Pi_M \coloneqq \{ \frac{1}{\Pi_M} \cdot g \cdot \Pi_M \in Q[W]  \mid g \in \H\}$ and the intersection is computed in $Q[W]$. We also give a poles and residues construction of the algebra $\H \cap \Pi_M^{-1}  \H \Pi_M$ in the spirit of \cite{GKV}. Similar results hold when replacing Borel-Moore homology with $K$-theory.

Denote by $\tau_M$ the anti-involution of $\HBM{G\times \C^*}{Z}$ induced by the map $(x,y)\mapsto (y,x)$ in $Z\subset \tM \times \tM$. We also construct geometrically an $S^W$-valued bilinear form on $S$ with respect to which $\tau_M$ is the adjoint anti-involution. Extending $\tau_M$ to an anti-involution of $Q[W]$, we can reformulate Theorem \ref{intro_theorem} to obtain $\HBM{G\times \C^*}{Z} \cong \H \cap \tau_M(\H)$. 

When $V$ is the adjoint representation of $G$ and $M$ is the Lie algebra of the unipotent radical of $B$, the variety $Z$ is the usual Steinberg variety. In this case, combining Theorem \ref{intro_theorem} with results of Ginzburg, Kazhdan and Lusztig \cite{KL, Gi2, Lu}, we recover a result of \cite{GKV}, which realizes the affine Hecke algebra and its degeneration as the intersection of two copies of the (degenerate) nil-Hecke algebra.

We also obtain a version of Theorem \ref{intro_theorem} when the group $G$ is replaced by its loop group, and $B$ is replaced by an Iwahori subgroup or the arc group. The spherical version is a quantized version of the gluability construction of Coulomb branches, due to Teleman and Gannon--Webster \cite{Tel, GW}, and the Iwahori version can be seen as a gluing construction for quantized Iwahori-Coulomb branches. The $K$-theoretic version of the latter also recovers descriptions of DAHA in \cite{GKV, BEG}.

The current paper is organized as follows. In Section \ref{section_algebra}, the algebra $\H \cap \Pi^{-1} \H \Pi$ is defined and a poles and residues construction of it is provided (Theorem \ref{H_Sigma_(1)(2)(3)}). In Section \ref{section_finite} we study the geometry of the Steinberg-type variety $Z$ and we prove a slightly more general version of Theorem \ref{intro_theorem}. We also discuss the associated bilinear form and the anti-involution $\tau_M$. In Section \ref{section_affine} we prove Iwahori and spherical analogues of Theorem \ref{intro_theorem} in the affine case.

\subsection*{Acknowledgements}
I would like to thank my advisor, Victor Ginzburg, for suggesting this problem, for directing me towards the relevant literature and for many helpful discussions. I am also grateful to Michael Finkelberg and Pavel Etingof for very interesting suggestions and for pointing out a connection to \cite{BEF}. I also thank Daniil Klyuev for informing me about his forthcoming paper \cite{Kly2} and for communicating several related computations.

The author used ChatGPT (Sol 5.6, OpenAI) to review an earlier version of this paper. In particular, ChatGPT identified the need for condition (2b) in Theorem \ref{H_Sigma_(1)(2)(3)}. The entire paper was written and independently verified by the author, who takes full responsibility for its contents.

\section{Algebra}

\label{section_algebra}

\subsection*{Preliminaries on nil-Hecke algebras}

We use the standard framework of Kac-Moody algebras, as explained in \cite{Ka} or chapter I of \cite{Ku}. Let $A=(a_{ij})$ be a symmetrizable generalized Cartan matrix. Fix a root datum associated to $A$, that is $({\X}, {\X}^\vee, \mathbb{\Pi}, {\mathbb{\Pi}}^\vee)$, where ${\X}$ is a free abelian group, ${\X}^\vee \coloneqq \Hom_\Z ({\X},\Z)$, $\mathbb{\Pi}=\{ \alpha_1, \alpha_2, \ldots \alpha_n\} \subset \X$ is the set of simple roots, $\mathbb{\Pi}^\vee = \{ \alpha_1^\vee, \alpha_2^\vee, \ldots \alpha_n^\vee\}$ is the set of simple coroots. The elements of $\mathbb{\Pi}$ and of $\mathbb{\Pi}^\vee$ are required to be linearly independent and to satisfy the relation $\langle \alpha_i^\vee, \alpha_j \rangle = a_{ij}$, where $\langle-,-\rangle : \X^\vee \times \X \rightarrow \Z$ is the usual pairing. Let $\mathbb{h}^* \coloneqq \C \otimes_{\Z}{\X}$ be the dual Cartan and $\mathbb{h} \coloneqq \C\otimes_{\Z} \X^\vee$ be the Cartan. The triple $(\mathbb{h},{\mathbb{\Pi}},{\mathbb{\Pi}}^\vee)$ is called a realization of $A$. Associated to this realization, one defines the Kac-Moody algebra $\mathbb{g}$. Associated to the root datum, one defines the Kac-Moody group $\tG$, its Borel subgroup $\tB$ and its maximal torus $\tT$. Let $\mathbb{\Phi}\subset \mathbb{h}^*$ be the set of roots of the realization, let $\mathbb{\Phi}_\pm$ be the set of positive/negative roots, let $\bbPhire$ be the set of real roots and let $\bbPhire_\pm \coloneqq \bbPhire \cap \mathbb{\Phi}_\pm$ be the set of positive/negative real roots.  Let $\tW$ be the associated Weyl group, and let "$\leq$" be the Bruhat order on it.

We work with two different setups in parallel, to highlight the similarities and differences between them. We think of the first setup as being "in homology", while the second one is "in $K$-theory". This will be later justified geometrically. We use cursive letters to denote the $K$-theoretic analogue of the objects in homology denoted by roman or greek letters. Although we only write the results in homology, they hold with the same proofs in $K$-theory.

We recall the definition of the nil-Hecke algebra, introduced in \cite{KK1}, see also chapter XI of \cite{Ku}. Let $S = \C[\bbh]$, $Q=\C(\bbh)$. Let $Q[\tW] \coloneqq Q \otimes_\C \C[\tW]$. To simplify notation, denote $f[w] \coloneqq f\otimes w$. Define a multiplication on $Q[\tW]$ by:
$$f_1 [w_1]\cdot f_2 [w_2] \coloneqq f_1 \cdot w_1(f_2) [w_1 w_2]$$
\noindent where $w_1(-)$ is the extension to $Q$ of the action of $w_1 \in \tW$ on $\bbh$. Identify $Q$ with $Q[1]$ in $Q[\tW]$. We let $Q[\tW]$ act on $Q$, where $\tW$ acts as above and $Q$ acts by left multiplication. We denote this action by "." .

For every simple root $\alpha_i \in \mathbb{\Pi}$, define
$$D_{s_i} \coloneqq -\frac{1}{\alpha_i} [s_{\alpha_i}] + \frac{1}{\alpha_i} [1] \in Q[\tW]$$

The elements $D_{s_i}$ satisfy the braid relations in $\tW$, so if $w = s_{i_1} s_{i_2} \ldots s_{i_\ell}$ is a reduced expression for $w\in \tW$, we can define 
$$D_w \coloneqq D_{s_{i_1}} D_{s_{i_2}} \ldots D_{s_{i_\ell}}$$
\noindent independent of the choice of reduced expression. These are called the Demazure operators and they satisfy $D_{s_i}^2=0$. Let $\H$ be the subring of $Q[\tW]$ generated by $\{D_w\}_{w\in \tW}$ and $S$. This is called the nil-Hecke algebra.

\begin{proposition}
\label{tH_description}
    $\H$ is a free left $S$-module, with a basis given by $\{D_w\}_{w\in \tW}$.\\
    $\H = \{ f\in Q[\tW] \mid f.S \subset S \}$    
\end{proposition}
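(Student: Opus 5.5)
The plan is to prove both statements together, via a triangularity argument in the basis of $Q[\tW]$ over $Q$ given by $\{[w]\}_{w\in\tW}$, together with the standard action of Demazure operators on $S$.

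\emph{Spanning and freeness.} First I show that the left $S$-span of $\{D_w\}$ is closed under multiplication, hence equals $\H$. Since $\H$ is generated by $S$ and the $D_{s_i}$, it suffices to check two things.

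First, $D_{s_i} f = s_i(f) D_{s_i} + \frac{f - s_i(f)}{\alpha_i}[1]$ for $f\in S$, a direct computation. Since $\alpha_i$ divides $f-s_i(f)$ in $S$, commuting $D_{s_i}$ past $f$ stays in the span.

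Second, $D_{s_i}D_w$ equals $D_{s_iw}$ if $\ell(s_iw)>\ell(w)$ and equals $0$ otherwise. This uses the braid relations and $D_{s_i}^2=0$: if $\ell(s_iw)<\ell(w)$, write $D_w=D_{s_i}D_{s_iw}$.

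For linear independence, expand $D_w$ in the basis $[v]$. By induction on a reduced expression, $D_w=\sum_{v\le w} c_{w,v}[v]$, and the top coefficient is $c_{w,w}=\pm\prod_{\beta}\beta^{-1}$, where $\beta$ runs over $\bbPhire_+\cap w\bbPhire_-$. In particular $c_{w,w}\neq 0$. Unitriangularity with nonzero diagonal, with respect to the Bruhat order, gives independence of the $D_w$ over $Q$, hence over $S$.

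\emph{Characterization.} Let $\H'=\{f\in Q[\tW]: f.S\subset S\}$. The inclusion $\H\subset\H'$ is clear, since $S$ acts on $S$ and each $D_{s_i}$ maps $S$ to $S$ by divisibility.

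For the converse, take $f\in\H'$. Using triangularity, write $f=\sum_w q_w D_w$ with $q_w\in Q$, a finite sum: the $D_w$ form a $Q$-basis of $Q[\tW]$, since the transition matrix to $\{[w]\}$ is triangular and invertible on each finite Bruhat ideal. The goal is to show each $q_w\in S$. I argue by induction on the finite set of $w$ with $q_w\neq 0$, choosing $w$ minimal in Bruhat order among them.

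The key tool is evaluation on suitable elements of $S$. The idea is to find $g_w\in S$ with $D_v.g_w=\delta_{v,w}$ for all $v$ with $\ell(v)\le \ell(w)$, or at least $D_w.g_w=1$ and $D_v.g_w=0$ for the other $v$ in the support not above $w$. Then $f.g_w=q_w+\sum_{v>w}q_v D_v.g_w$.

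It is cleaner to proceed the other way: apply $f$ to elements killed by all $D_v$ with $v\ne w$ of length at most $\ell(w)$. In finite type such elements are Schubert-type polynomials, namely $g_w=D_{w^{-1}w_0}(\Pi)$ suitably normalized. In Kac--Moody type I would instead use the standard facts, from Kostant--Kumar, that $S$ surjects onto $\C$ under the maps $D_v$ appropriately, and that there exist $\xi^w$ with $D_v\xi^w$ evaluated at $0$ equal to $\delta_{v,w}$.

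The cleanest route is to induct on minimal $w$, not maximal. Using $\epsilon_0(D_v.g)$, where $\epsilon_0$ denotes evaluation at $0$, and elements $\xi^w$ with $\epsilon_0(D_v.\xi^w)=\delta_{vw}$, I would then extract the $q_w$ one at a time. A more robust alternative is to exploit $S$-linearity: $f.(hg)$ for $h\in S$, via the twisted Leibniz rule, lets one isolate coefficients.

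\emph{Main obstacle.} The hardest step is this extraction of $q_w\in S$ from the condition $f.S\subset S$, in arbitrary Kac--Moody type. Here polynomials in $S$ need not separate the Weyl group nicely, so one must invoke Kostant--Kumar's result that $\H$ is the full "integral" subring. I would cite \cite{KK1} and Chapter XI of \cite{Ku} for this step, reducing to their duality between $\H$ and the ring of functions $\tW\to S$ satisfying the GKM divisibility conditions.
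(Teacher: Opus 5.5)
Your argument for the first assertion is correct and is the standard Kostant--Kumar one. The rule $D_{s_i}f=s_i(f)D_{s_i}+\frac{f-s_i(f)}{\alpha_i}$ together with $D_{s_i}D_w\in\{D_{s_iw},0\}$ shows that the left $S$-span of the $D_w$ is $\H$. Bruhat-triangularity with leading coefficient $\pm\prod_{\beta\in\bbPhire_+\cap w\bbPhire_-}\beta^{-1}$ gives independence, and indeed shows that the $D_w$ form a $Q$-basis of $Q[\tW]$. The inclusion $\H\subset\{f : f.S\subset S\}$ is also fine. The paper gives no argument for any of this: it quotes the whole proposition from \cite{KK1} and Chapter XI of \cite{Ku}. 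So a bare citation for the reverse inclusion would leave you on the same footing as the paper.

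The gap is that the mechanisms you sketch for the reverse inclusion do not prove it. Both fail in the Kac--Moody, in particular affine, generality in which the statement is used later.

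\emph{First}, elements $\xi^w\in S$ with $\epsilon_0(D_v.\xi^w)=\delta_{v,w}$ for all $v$ need not exist. For $g\in S$ homogeneous of degree $d$, the numbers $\epsilon_0(D_v.g)$ with $\ell(v)=d$ are the Schubert coordinates of the image of $g$ under the characteristic map $S\to H^*(\bbG/\bbB;\C)$. This map is not surjective in general; it already fails for the affine flag variety of $SL_3$, whose cohomology is not generated in degree $2$. Kostant--Kumar's $\xi^w$ live in the dual $\operatorname{Hom}_S(\H,S)$, not in $S$, so they cannot be fed into $f.(-)$. For finite $\tW$ your extraction does work: take $w$ Bruhat-minimal in the support and $g_w$ the BGG polynomial of degree $\ell(w)$, so that $f.g_w=q_w$.

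\emph{Second}, duality alone does not reduce the claim. The equality $\H=\{f : \psi(f)\in S \text{ for all } \psi\in\operatorname{Hom}_S(\H,S)\}$ is immediate from the dual basis. But $f.S\subset S$ only tests $f$ against the functionals $\sum_w a_w[w]\mapsto h\sum_w a_w\,w(g)$ with $g,h\in S$, and for the same reason these do not span the dual.

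A self-contained route that works in general is the residue argument.
\begin{itemize}
\item Suppose $f=\sum_w a_w[w]$ preserves $S$. Examine $f.g$ near a generic point $x$ of a hypersurface, interpolating $g$ and its first derivatives at the points $w^{-1}x$. These points are pairwise distinct, except that $w$ and $s_\beta w$ collide on $\th_\beta$ for $\beta\in\bbPhire_+$. (In the affine setting, $\hbar=0$ is also fixed pointwise, by translations, which is why $Q$ excludes poles there.) This yields conditions (1) and (2a).
\item Take $w$ Bruhat-maximal in the support. Then $a_{s_\beta w}=0$ whenever $s_\beta w>w$, so (2a) gives $c_w\coloneqq(-1)^{\ell(w)}a_w\prod_{\beta\in\bbPhire_+\cap w\bbPhire_-}\beta\in S$.
\item The element $f-c_wD_w$ still preserves $S$, and its support generates a strictly smaller Bruhat ideal, so induction finishes.
\end{itemize}
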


When $A$ is of affine type, we extend the definition of the nil-Hecke algebra. Let $\K=\C((\varpi))$ and $\O=\C[[\varpi]]$. Let $G$ be a finite dimensional reductive group with simply connected derived subgroup, let $T \subset B$ be a maximal torus and a Borel subgroup and let $W$ be the Weyl group. Let $G(\K)$ be the corresponding loop group and consider its extension $\mathbb{G} = G(\K) \rtimes \Crot$, where $\Crot \cong \C^*$ acts on $G(\K)$ by loop rotations. For our purposes, we ignore the central extension by $\C^*$, as it will always act trivially. Let $\mathbb{B} = I \rtimes \Crot$, $\mathbb{T} = T \times \Crot$, where $I$ is the Iwahori subgroup. Then $\mathbb{h} = \operatorname{Lie} \mathbb{T}$ is the affine Cartan subalgebra and $\mathbb{\Phi}$ is the set of affine roots. The affine Weyl group $\Waff$ is the Coxeter group generated by the simple affine reflections. Let $S=\C[\th]$ and let $Q$ be the ring of rational functions on $\th$ with no poles along imaginary root hyperplanes. The same formulas as above define Demazure operators $D_w \in Q[\Waff]$, for $w \in \Waff$. We extend this to the extended affine Weyl group $\tW \coloneqq X_*(T) \rtimes W$ as follows. The length function on the Coxeter group $\Waff$ extends to a length function $\ell$ on $\tW$. Let $\Omega = \{ w\in \tW \mid \ell(w)=0\}$ be the stabilizer in $\tW$ of the fundamental alcove. Then $\tW = \Waff \rtimes \Omega$. For $w = a\omega \in \tW$, with $a\in \Waff$ and $\omega \in \Omega$, define 
$$D_w \coloneqq D_a [\omega] \in Q[\tW]$$

As before, define the (extended) nil-Hecke algebra $\H$ as the subring of $Q[\tW]$ generated by $\{ D_w\}_{w\in \tW}$ and $S$. Proposition \ref{tH_description} still holds for this extended version of $\H$, and all of the results that follow will hold with the same proofs. We define $\H$ in a similar way for $\bbG = G(\K) \rtimes \Crot$ or $\bbG = G(\K) \rtimes \Crot \times \Cdil$, where $\Cdil=\C^*$.

Following \cite{KK2}, assuming the initial root datum is simply connected, we define the $K$-theoretic analogues $\S \coloneqq \C[\tT]$, $\mathcal{Q}\coloneqq \C(\tT)$ (or rational functions on $\tT$ with no poles at imaginary roots, in the affine case). We denote elements of $X^*(\tT)$ by $e^{\alpha}$. The $K$-theoretic Demazure operators for simple reflections are defined by
$${\mathcal{D}_{s_i}} \coloneqq - \frac{e^{-\alpha_i}}{1-e^{-\alpha_i}} [s_{\alpha_i}]+ \frac{1}{1-e^{-\alpha_i}} [1] \in \mathcal{Q}[\tW]$$

They satisfy the braid relations, so one defines $\mathcal{D}_w$ for any $w\in \tW$ as before, but $\mathcal{D}_{s_i}^2 = {\mathcal{D}_{s_i}}$. As in homology, define the $K$-theoretic nil-Hecke algebra $\mathcal{H}$ to be the subring of $\mathcal{Q}[\tW]$ generated by $\mathcal{S}$ and $\{ 
\mathcal{D}_w\}_{w\in \tW}$. In the affine case, we define the extended nil-Hecke algebra (nil-DAHA) using the same technique as in homology. This is constructed in \cite{Hai}, \cite{CF}.

\subsection*{Definition and properties of $\H_\Sigma$}

Let $X^*(\tT)$ be the character lattice of $\tT$. Note that $\tW$ acts on it. Let $\Sigma$ be a collection of elements of $X^*(\tT)$, in which every $\lambda \in X^*(\tT)$ appears $m_\lambda$ times, where $m_\lambda \in \Z_{\geq 0}$. 

\begin{equation}
\label{finiteness_assumption}
\tag{A}
\parbox{0.85\textwidth}{We assume that for all $w\in \tW$, the collections $w(\Sigma) \setminus \Sigma$ and $\Sigma \setminus w(\Sigma)$ are finite (i.e. only finitely many weights appear with non-zero multiplicity).}
\end{equation}

\noindent Define
$$\Pi_\Sigma \coloneqq \prod_{\lambda \in X^*(\tT)} \lambda^{m_\lambda}$$

\noindent Its $K$-theoretic analogue is
$$\varPi_\Sigma\coloneqq \prod_{\lambda \in X^*(\tT)} (1-e^{-\lambda})^{m_\lambda}$$

\noindent In general, $\Pi_\Sigma$ is an infinite product, so it is not an element of $Q$. We will treat it as a formal symbol. Let $w\in \tW$. Assumption (\ref{finiteness_assumption}) ensures that $\frac{\Pi_\Sigma}{\Pi_{w(\Sigma)}}$ is a well-defined rational function on $\th$. Expressing this fraction in lowest terms, let

$$\frac{\Pi_\Sigma}{\Pi_{w(\Sigma)}} = \frac{\tPi_{\Sigma, w(\Sigma)}}{\tPi_{w(\Sigma), \Sigma}}$$

\noindent where $\tPi_{\Sigma, w(\Sigma)}$ and  $\tPi_{w(\Sigma), \Sigma}$ are coprime elements of $S$. 

\noindent When $\Sigma$ is finite, $\tPi_{\Sigma, w(\Sigma)} = \frac{\Pi_\Sigma}{\gcd(\Pi_\Sigma, \Pi_{w(\Sigma)})}$ and $\tPi_{w(\Sigma), \Sigma} = \frac{\Pi_{w(\Sigma)}}{\gcd(\Pi_\Sigma, \Pi_{w(\Sigma)})}$.

Assumption (\ref{finiteness_assumption}) also allows us to define an operator $\Pi_\Sigma^{-1} (-) \Pi_\Sigma : Q[\tW] \rightarrow Q[\tW]$ by:

$$\Pi_\Sigma^{-1} \left(\sum_{w\in \tW} f_w [w]\right) \Pi_\Sigma \coloneqq \sum_{w\in \tW} f_w \frac{\Pi_{w(\Sigma)}}{\Pi_\Sigma}[w]$$

\begin{definition}
    Define the subalgebra $\H_\Sigma \subset Q[\tW]$ by $\H_\Sigma \coloneqq \H \cap \Pi_\Sigma^{-1} \H \Pi_\Sigma$.
\end{definition}

The goal of this section is to study the algebra $\H_\Sigma$. We describe it as the stabilizer in $\H$ of an $S$-module in Proposition \ref{H_Sigma_is_normalizer}. We give a poles and residues description of it in Theorem \ref{H_Sigma_(1)(2)(3)}, similar to \cite{GKV}. In a special case, we also describe a basis of $\H_\Sigma$ viewed as an $S$-module in Proposition \ref{H_Sigma_basis}.

Let $Q \Pi_\Sigma^{-1}$ be a free rank $1$ module over $Q$, with a generator denoted by $\Pi_\Sigma^{-1}$. Define a left $Q[\tW]$-module structure on it by linearly extending:
$$f_w [w] . (g \Pi_\Sigma^{-1}) \coloneqq f_w w(g) \frac{\Pi_\Sigma}{\Pi_{w(\Sigma)}} \Pi_\Sigma^{-1}$$

\noindent Let $S \Pi_\Sigma^{-1}$ be the $S$-submodule of $Q \Pi_{\Sigma}^{-1}$ generated by $\Pi_{\Sigma}^{-1}$. This is a free $S$-module of rank $1$.

\begin{proposition}
\label{H_Sigma_is_normalizer}
We have equalities:
$$\H_\Sigma = \{ f \in \H \mid f.(S \Pi_\Sigma^{-1}) \subset S \Pi_\Sigma^{-1} \} = \{ f \in Q[\tW] \mid f.S \subset S \text{ and } f.(S \Pi_\Sigma^{-1}) \subset S \Pi_\Sigma^{-1}\}$$
\end{proposition}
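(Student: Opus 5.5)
The plan is to reduce everything to the characterization $\H = \{ f\in Q[\tW] \mid f.S \subset S\}$ from Proposition \ref{tH_description}, and to observe that conjugation by $\Pi_\Sigma$ intertwines the action on $Q$ with the action on $Q\Pi_\Sigma^{-1}$.

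Let $\phi : Q \to Q\Pi_\Sigma^{-1}$ be the $Q$-linear isomorphism $g \mapsto g\Pi_\Sigma^{-1}$; it carries $S$ onto $S\Pi_\Sigma^{-1}$. The key identity to check is
$$ f.\phi(g) = \phi\big( (\Pi_\Sigma f \Pi_\Sigma^{-1}).g \big), \qquad f\in Q[\tW],\ g\in Q, $$
where $\Pi_\Sigma (-) \Pi_\Sigma^{-1}$ is the inverse of the operator $\Pi_\Sigma^{-1}(-)\Pi_\Sigma$. Explicitly, $\Pi_\Sigma\big(\sum f_w[w]\big)\Pi_\Sigma^{-1} = \sum f_w \frac{\Pi_\Sigma}{\Pi_{w(\Sigma)}}[w]$. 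It suffices to verify the identity on a single term $f_w[w]$, and there it is immediate from the definition of the module structure:
$$ f_w[w].(g\Pi_\Sigma^{-1}) = f_w\, w(g)\, \frac{\Pi_\Sigma}{\Pi_{w(\Sigma)}}\, \Pi_\Sigma^{-1}, $$
which is exactly $\phi$ applied to $\big(f_w \frac{\Pi_\Sigma}{\Pi_{w(\Sigma)}}[w]\big).g$. By assumption (\ref{finiteness_assumption}) all these ratios lie in $Q$, so the identity makes sense. I also need that $\Pi_\Sigma^{-1}(-)\Pi_\Sigma$ is an algebra automorphism of $Q[\tW]$ with inverse $\Pi_\Sigma(-)\Pi_\Sigma^{-1}$. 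This is a short check using $w_1\big(\Pi_{w_2(\Sigma)}/\Pi_\Sigma\big) = \Pi_{w_1w_2(\Sigma)}/\Pi_{w_1(\Sigma)}$, which follows from $w_1(\Pi_\Lambda/\Pi_{\Lambda'}) = \Pi_{w_1\Lambda}/\Pi_{w_1\Lambda'}$ for finite-difference collections. The algebra property is not strictly needed for the set equalities, but it confirms that $\H_\Sigma$ is a subalgebra.

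With the identity in hand, the argument runs as follows. For $f\in Q[\tW]$ we have $f.(S\Pi_\Sigma^{-1}) \subset S\Pi_\Sigma^{-1}$ if and only if $(\Pi_\Sigma f\Pi_\Sigma^{-1}).S \subset S$. By Proposition \ref{tH_description}, this holds if and only if $\Pi_\Sigma f \Pi_\Sigma^{-1} \in \H$, that is, if and only if $f \in \Pi_\Sigma^{-1}\H\Pi_\Sigma$. Intersecting with the condition $f.S\subset S$, which is equivalent to $f\in\H$, gives the third description: $\{f\in Q[\tW] \mid f.S\subset S,\ f.(S\Pi_\Sigma^{-1})\subset S\Pi_\Sigma^{-1}\} = \H\cap\Pi_\Sigma^{-1}\H\Pi_\Sigma = \H_\Sigma$. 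The middle description is the same set, written with $f\in \H$ replacing the condition $f.S\subset S$.

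The only point that needs care is the bookkeeping of the formal symbol $\Pi_\Sigma$ in the infinite (affine) case. All manipulations must go through the ratios $\Pi_\Sigma/\Pi_{w(\Sigma)}$, which are genuine rational functions by (\ref{finiteness_assumption}). In particular, I must confirm that these ratios have no poles along imaginary root hyperplanes, so that they lie in $Q$ in the affine setting. Since $\tW$ fixes imaginary roots, those factors cancel in the ratio, which should settle this point.
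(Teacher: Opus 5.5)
Your proof is correct and follows the paper's argument: the paper also reduces to the observation that $f.(S\Pi_\Sigma^{-1})\subset S\Pi_\Sigma^{-1}$ if and only if $(\Pi_\Sigma f\Pi_\Sigma^{-1}).S\subset S$, and then applies Proposition \ref{tH_description}. You spell out what the paper calls ``follows from the definitions'': the intertwining identity, the invertibility of the conjugation, and the check that the ratios $\Pi_\Sigma/\Pi_{w(\Sigma)}$ lie in $Q$.
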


\begin{proof}
    The second equality follows from Proposition \ref{tH_description}. It follows from the definitions that $f.(S \Pi_{\Sigma}^{-1}) \subset S \Pi_\Sigma^{-1}$ if and only if $(\Pi_\Sigma f \Pi_{\Sigma}^{-1}).S \subset S$. Then, the first equality follows from Proposition \ref{tH_description} and the definition of $\H_\Sigma$.
\end{proof}

For $\alpha \in X^*(\tT)$, let $\mathbb{h}_{\alpha} \coloneqq \ker(\alpha) \subset \mathbb{h}$ and $\tT_\alpha \coloneqq \ker(e^\alpha) \subset\tT$.

For $f = \sum_{w\in \tW} f_w [w]\in Q[\tW]$ consider the following conditions.

(1) For any $w\in \tW$, the rational function $f_w$ has no singularities other than finitely many first order poles at $\th_{\alpha}$, where $\alpha \in \bbPhire_+$

(2a) For any $w\in \tW$ and $\alpha \in \bbPhire_+$, we have $\Res_{\th_{\alpha}}(f_w) + \Res_{\th_\alpha} (f_{s_{\alpha}w}) = 0$.

(2b) For any $w\in \tW$ and $\alpha \in \bbPhire_+$, we have $\Res_{\th_{\alpha}}\left(\frac{\Pi_\Sigma}{\Pi_{w(\Sigma)}}f_w\right) + \Res_{\th_\alpha} \left(\frac{\Pi_\Sigma}{\Pi_{s_\alpha w(\Sigma)}}f_{s_{\alpha}w}\right) = 0$.

(3) For any $w\in \tW$, the rational function $\frac{f_w}{\tPi_{w(\Sigma),\Sigma}}$ has no singularities other than finitely many first order poles at $\th_{\alpha}$, where $\alpha \in \bbPhire_+$

(1) and (2a) are conditions (1.3.1) and (1.3.2) in \cite{GKV}, and (3) is the generalization to $\Sigma$ of condition (1.3.3). In $K$-theory, we consider similar conditions, replacing $\th_\alpha$ by $\tT_\alpha$.

\noindent The following follows from \cite{GKV}, see also \cite[3(vi)]{FT}.

\begin{proposition}
\label{tH_(1)(2)}
    $\H = \{ f\in Q[\tW] \mid f\text{ satisfies conditions (1), (2a)} \}$
\end{proposition}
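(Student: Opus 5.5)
We prove the two inclusions separately, using the characterization $\H = \{ f\in Q[\tW] \mid f.S \subset S\}$ of Proposition \ref{tH_description} as the bridge between the algebraic and the analytic descriptions.

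\emph{Inclusion $\subseteq$.} First check that the generators satisfy (1) and (2a). For $f\in S$ this is trivial. For $D_{s_i}$, the coefficients are $\pm 1/\alpha_i$, with first-order poles only along $\th_{\alpha_i}$, and their residues cancel because $s_{\alpha_i}\cdot 1 = s_{\alpha_i}$. Then show that the set of $f$ satisfying (1) and (2a) is closed under multiplication. Write $(fg)_w=\sum_{uv=w} f_u\, u(g_v)$. Poles of $u(g_v)$ lie along $u(\th_\beta)=\th_{u\beta}$, which is again a real root hyperplane. For a fixed $\alpha$, pair the term indexed by $(u,v)$ with the term indexed by $(s_\alpha u, v)$. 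Residues of $f_u u(g_v)+f_{s_\alpha u}s_\alpha u(g_v)$ along $\th_\alpha$ cancel: the part coming from $f$'s pole cancels by (2a) for $f$, since $u(g_v)$ and $s_\alpha u(g_v)$ agree on $\th_\alpha$ when regular there. The part coming from $g$'s pole is handled by pairing $(u,v)$ with $(u s_\beta, s_\beta v)$, where $u\beta=\pm\alpha$, and using (2a) for $g$ transported by $u$. Finiteness of poles follows since only finitely many $u$ contribute. This is essentially the computation in \cite{GKV}.

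\emph{Inclusion $\supseteq$.} This is the main step. Let $f$ satisfy (1) and (2a). We show $f.S\subset S$, from which $f\in\H$ follows by Proposition \ref{tH_description}. For $p\in S$, the function $f.p=\sum_w f_w\, w(p)$ is rational with at most first-order poles along finitely many $\th_\alpha$, $\alpha\in\bbPhire_+$. (In the affine case, note also that (1) excludes imaginary poles.) Group the terms into pairs $\{w, s_\alpha w\}$. Then
$$\Res_{\th_\alpha}\bigl(f_w w(p)+f_{s_\alpha w} s_\alpha w(p)\bigr)=\bigl(\Res f_w+\Res f_{s_\alpha w}\bigr)\, w(p)|_{\th_\alpha}=0,$$
because $s_\alpha w(p)$ and $w(p)$ coincide on $\th_\alpha$. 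Hence $f.p$ has no poles in codimension one. Since $S$ is a polynomial ring, hence normal, this gives $f.p\in S$.

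The delicate points are the pairing argument in the multiplicativity check and, in the infinite (Kac--Moody or affine) case, making sure all sums are finite. Since elements of $Q[\tW]$ have finite support, this reduces to bookkeeping. Alternatively, the closure argument can be avoided entirely: the second inclusion already shows that $\{f : \text{(1),(2a)}\}\subset \H$, and checking (1) and (2a) on the $S$-basis $\{D_w\}$ of Proposition \ref{tH_description} reduces $\subseteq$ to showing that each $D_w$ has these properties. That in turn follows from the product formula restricted to multiplying by a single $D_{s_i}$ on the right, which is a short explicit computation.
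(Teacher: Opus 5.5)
Your outline is the paper's. The paper gets $\subseteq$ from the generators plus closure of the right-hand side under products, citing \cite[Theorem 1.4]{GKV}. It gets $\supseteq$ by showing $f.S\subset S$ and applying Proposition~\ref{tH_description}, citing \cite[Lemma 3.1(i)]{GKV}. Your residue-pairing and normality argument for $\supseteq$ is correct, and it is exactly what that citation supplies.

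The closure paragraph, however, does not work as written.
\begin{itemize}
\item Since $(us_\beta)(s_\beta v)=uv$, pairing $(u,v)$ with $(us_\beta,s_\beta v)$ keeps both terms inside $(fg)_w$. It therefore cannot give the cancellation required by (2a), which compares $(fg)_w$ with $(fg)_{s_\alpha w}$. For a pole coming from $g$, the correct partner of $(u,v)$ is $(u,s_\beta v)$.
\item More seriously, $f_u$ and $u(g_v)$ may both be singular along $\th_\alpha$. Then $f_u\,u(g_v)$ has a double pole, and its residue does not split into an ``$f$-part'' and a ``$g$-part''. You never show these double poles cancel inside $(fg)_w$, and that is needed for (1).
\end{itemize}
A correct direct argument groups the four terms indexed by $(u,v),(u,s_\beta v),(s_\alpha u,v),(s_\alpha u,s_\beta v)$. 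Set $H=u(g_v+g_{s_\beta v})$, which is regular along $\th_\alpha$ by (2a) for $g$. The four terms sum to $f_uH+f_{s_\alpha u}\,s_\alpha(H)$, whose residue vanishes by (2a) for $f$. The simple-pole property still needs a separate $s_\alpha$-symmetry argument.

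Your alternative avoids all of this and is complete, so use it. Explicitly, $(fD_{s_i})_w=(f_w+f_{ws_i})/w(\alpha_i)$ and $ws_i=s_{w\alpha_i}w$.
\begin{itemize}
\item Condition (1): by (2a) for $f$, the numerator is regular along $\th_{w\alpha_i}$, so $(fD_{s_i})_w$ has only simple poles.
\item Condition (2a) at a root $\alpha\neq\pm w\alpha_i$: it follows from (2a) for $f$ at $w$ and at $ws_i$, using $(s_\alpha w)(\alpha_i)|_{\th_\alpha}=w(\alpha_i)|_{\th_\alpha}$.
\item Condition (2a) at $\alpha=\pm w\alpha_i$: it is immediate, since $(fD_{s_i})_{ws_i}=-(fD_{s_i})_w$.
\end{itemize}
In the extended affine case, also note that right multiplication by $[\omega]$, $\omega\in\Omega$, only reindexes coefficients. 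Compared with the paper, this route makes the proof self-contained, relying only on Proposition~\ref{tH_description} rather than on \cite[Theorem 1.4]{GKV}.
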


\begin{proof}
    The right hand side is an algebra by Theorem 1.4 in \cite{GKV}. Every element of $S$ and each $D_{s_i}$ satisfies conditions (1) and (2a), so $\H \subset \{ f\in Q[\tW] \mid f\text{ satisfies (1) and (2a)} \}$. Using Proposition \ref{tH_description}, the reverse inclusion follows from Lemma 3.1(i) in \cite{GKV}.
\end{proof}

\begin{theorem}
\label{H_Sigma_(1)(2)(3)}
    $\H_\Sigma = \{ f \in Q[\tW] \mid f \text{ satisfies conditions (1), (2a), (2b), (3)}\}$
\end{theorem}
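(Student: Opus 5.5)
The plan is to use the definition $\H_\Sigma = \H \cap \Pi_\Sigma^{-1}\H\Pi_\Sigma$ together with Proposition \ref{tH_(1)(2)}, applied twice. An element $f=\sum_w f_w[w]$ lies in $\H$ iff it satisfies (1) and (2a). It lies in $\Pi_\Sigma^{-1}\H\Pi_\Sigma$ iff $g \coloneqq \Pi_\Sigma f \Pi_\Sigma^{-1} = \sum_w \frac{\Pi_\Sigma}{\Pi_{w(\Sigma)}} f_w [w]$ lies in $\H$. By Proposition \ref{tH_(1)(2)} this holds iff $g$ satisfies (1) and (2a), i.e. iff:

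(1$'$) each $\frac{\Pi_\Sigma}{\Pi_{w(\Sigma)}} f_w$ has only finitely many first order poles along real root hyperplanes;

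(2$'$) the residue identity holds for $g$. This is literally condition (2b).

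It therefore suffices to show that, given (1) and (2a), condition (1$'$) is equivalent to (3). Here (3) says that $f_w/\tPi_{w(\Sigma),\Sigma}$ has only first order poles along $\th_\alpha$, $\alpha\in\bbPhire_+$.

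For this I would work prime by prime in $S$, a UFD. Fix $w$ and write $\frac{\Pi_\Sigma}{\Pi_{w(\Sigma)}} f_w = \tPi_{\Sigma,w(\Sigma)}\cdot \frac{f_w}{\tPi_{w(\Sigma),\Sigma}}$.

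For the direction (3)$\Rightarrow$(1$'$): multiplying by the polynomial $\tPi_{\Sigma,w(\Sigma)}$ cannot create poles, so (3) implies (1$'$).

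For the converse, assume (1) and (1$'$), and let $p$ be an irreducible factor of a denominator of $f_w/\tPi_{w(\Sigma),\Sigma}$.
\begin{itemize}
\item If $p$ does not divide $\tPi_{\Sigma,w(\Sigma)}$, its order in the product equals its order in $f_w/\tPi_{w(\Sigma),\Sigma}$. Hence (1$'$) forces $p$ to be a real root $\alpha$ (up to scalar) with pole order at most one.
\item If $p$ divides $\tPi_{\Sigma,w(\Sigma)}$, then by coprimality $p \nmid \tPi_{w(\Sigma),\Sigma}$. So the order of $p$ in the denominator of $f_w/\tPi_{w(\Sigma),\Sigma}$ equals its order in the denominator of $f_w$, and (1) already controls it.
\end{itemize}
Finiteness of the set of poles is clear in both cases. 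Thus (1) and (1$'$) together imply (3), which gives (1),(2a),(3),(2b) $\Leftrightarrow$ $f\in\H$ and $g\in\H$.

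The main obstacle is the bookkeeping of the irreducible factors of $\Pi_\Sigma/\Pi_{w(\Sigma)}$. Weights $\lambda\in\Sigma$ need not be roots, and distinct weights may be proportional, so one must argue with honest prime factorizations in $S$ (linear forms up to scalar) rather than with multisets of weights. In the affine case there is an extra point: $Q$ only allows rational functions without imaginary-root poles, so one must check that the fraction $\Pi_\Sigma/\Pi_{w(\Sigma)}$ is handled correctly. Since $\tW$ fixes the imaginary root $\delta$, the multiplicities of multiples of $\delta$ cancel in $\Sigma$ versus $w(\Sigma)$, so no imaginary factors appear in the lowest-terms expression, and the argument goes through.

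In $K$-theory the same proof applies with $S=\C[\tT]$, which is again a UFD under the simply connected assumption. The linear forms are replaced by the irreducible factors of $1-e^{-\lambda}$, and the hyperplanes by the subtori $\tT_\alpha$.
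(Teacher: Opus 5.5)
Your proposal is correct and follows essentially the same route as the paper. You apply Proposition \ref{tH_(1)(2)} to both $f$ and $\Pi_\Sigma f \Pi_\Sigma^{-1}$, read (2b) as (2a) for the conjugate, and use the coprimality of $\tPi_{\Sigma,w(\Sigma)}$ and $\tPi_{w(\Sigma),\Sigma}$ to trade condition (1) for the conjugate against (3); your prime-by-prime bookkeeping and the check that no imaginary-root factors appear in the affine case simply make explicit what the paper states in one line.
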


\begin{proof}
    Let $f = \sum_{w\in \tW} f_w [w] \in Q[\tW]$ such that it satisfies (1), (2a), (2b), (3). By Proposition \ref{tH_(1)(2)}, $f\in \H$. Now consider
    $$\Pi_\Sigma f \Pi_\Sigma^{-1} = \sum_{w\in \tW} f_w \frac{\Pi_\Sigma}{\Pi_{w(\Sigma)}} [w] \in Q[\tW]$$

    \noindent It satisfies condition (2a), because $f$ satisfies condition (2b). Since $f_w\frac{\Pi_\Sigma}{\Pi_{w(\Sigma)}} = \frac{f_w}{\tPi_{w(\Sigma),\Sigma}} \tPi_{\Sigma,w(\Sigma)}$, condition (3) for $f$ implies condition (1) for $\Pi_\Sigma f \Pi_\Sigma^{-1}$. Using Proposition \ref{tH_(1)(2)}, we obtain $\Pi_\Sigma f \Pi_\Sigma^{-1} \in \H$. So $f \in \H_\Sigma$.

    Conversely, let $f = \sum_{w\in \tW} f_w [w] \in \H_\Sigma$. In particular, $f\in \H$, so Proposition \ref{tH_(1)(2)} implies that $f$ satisfies (1) and (2a). As $\Pi_\Sigma f \Pi_\Sigma^{-1} \in \H$, it satisfies condition (2a), so $f$ satisfies condition (2b). We check that it also satisfies condition (3). Since $f\in\H_\Sigma$, we have:
    $$\Pi_\Sigma f \Pi_\Sigma^{-1} = \sum_{w\in \tW} f_w \frac{\Pi_\Sigma}{\Pi_{w(\Sigma)}}[w] =  \sum_{w\in \tW} f_w \frac{\tPi_{\Sigma, w(\Sigma)}}{\tPi_{w(\Sigma), \Sigma}}[w] \in \H$$
    
    Since $\tPi_{\Sigma, w(\Sigma)}$ and $\tPi_{w(\Sigma), \Sigma}$ are coprime, condition (1) for $\Pi_\Sigma f \Pi_\Sigma^{-1}$ and $f$ implies condition (3) for $f$.
\end{proof}

\begin{remark}
    Assume that, for all $\alpha \in \bbPhire_+$, no element of $\C \alpha$ is in $\Sigma$. Then, assuming condition (1), conditions (2a) and (2b) become equivalent, so Theorem \ref{H_Sigma_(1)(2)(3)} says:
    $$\H_\Sigma = \{ f \in Q[\tW] \mid f \text{ satisfies conditions (1), (2a), (3)}\}$$
    This is the case considered in \cite{GKV} and this is the case of interest in our future geometric applications, see Remarks \ref{finite_gluability_remark} and \ref{finite_Springer_remark}.
\end{remark}

\begin{lemma}
\label{correct_leading_term}
    Let $f = g_w D_w + \sum_{c \not\geq w} g_c D_c \in \H_\Sigma$. Then its leading term $g_w$ is divisible by $\tPi_{w(\Sigma),\Sigma}$ in $S$.
\end{lemma}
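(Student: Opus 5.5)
The plan is to compare the coefficient of $[w]$ in $f$ with the coefficient of $[w]$ in $\Pi_\Sigma f \Pi_\Sigma^{-1}$. Since $\Pi_\Sigma f\Pi_\Sigma^{-1}$ lies in $\H$, the latter coefficient can be read off from its expansion in the Demazure basis of Proposition \ref{tH_description}. The basic input is the triangularity of Demazure operators:
$$D_c=\sum_{v\le c} a_{c,v}[v],\qquad a_{c,c}=\pm\prod_{\beta\in \bbPhire_+\cap c(\bbPhire_-)}\beta^{-1}\neq 0.$$
This follows by induction on $\ell(c)$ from $D_{s_i}=-\frac1{\alpha_i}[s_i]+\frac1{\alpha_i}[1]$ and the subword property of Bruhat order. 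In the extended affine case, $D_{a\omega}=D_a[\omega]$ and the same statement holds.

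\emph{Step 1: the $[w]$-coefficient of $f$.} Let $\mathrm{supp}(h)\subset\tW$ denote the set of $v$ with nonzero $[v]$-coefficient. The support of $f$ lies in $\{w\}\cup\{v : v\le c \text{ for some } c\not\ge w\}$. No element $v$ of this set satisfies $v>w$, since $v\le c$ and $v\ge w$ would force $c\ge w$. For the same reason $[w]$ itself does not occur in any $D_c$ with $c\not\ge w$. Hence the $[w]$-coefficient of $f$ is $f_w=g_w a_{w,w}$.

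\emph{Step 2: conjugate and stay in $\H$.} By definition of $\H_\Sigma$, $h:=\Pi_\Sigma f\Pi_\Sigma^{-1}=\sum_v f_v\frac{\Pi_\Sigma}{\Pi_{v(\Sigma)}}[v]$ lies in $\H$. Conjugation only rescales coefficients, so $\mathrm{supp}(h)=\mathrm{supp}(f)$. In particular no $v>w$ lies in $\mathrm{supp}(h)$, and the $[w]$-coefficient of $h$ is $g_w a_{w,w}\frac{\tPi_{\Sigma,w(\Sigma)}}{\tPi_{w(\Sigma),\Sigma}}$. Write $h=\sum_c h_c D_c$ with $h_c\in S$.

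\emph{Step 3: the key triangularity claim.} I claim $h_c=0$ for every $c>w$, and hence the $[w]$-coefficient of $h$ equals $h_w a_{w,w}$. This is the step I expect to need care, especially in the affine case, where $\{c : c> w\}$ is infinite but only finitely many $h_c$ are nonzero.

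First, suppose some $c>w$ has $h_c\ne0$, and choose $c$ maximal among such. The $[c]$-coefficient of $h$ receives $h_ca_{c,c}\neq0$. It could also receive contributions from $h_{c'}D_{c'}$ with $c'>c$, but any such $c'$ satisfies $c'>w$, so $h_{c'}=0$ by maximality. Therefore $c\in\mathrm{supp}(h)$ with $c>w$, contradicting Step 2.

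Second, given this vanishing, the only terms contributing to $[w]$ are $h_wD_w$ and terms $h_cD_c$ with $c\ge w$, $c\ne w$. The latter have $c>w$ and so vanish. Therefore $h_wa_{w,w}$ is the $[w]$-coefficient of $h$.

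\emph{Step 4: conclude.} Comparing with Step 2 and cancelling $a_{w,w}\ne0$ gives
$$h_w\,\tPi_{w(\Sigma),\Sigma}=g_w\,\tPi_{\Sigma,w(\Sigma)},$$
with $h_w,g_w\in S$. Since $S$ is a UFD and $\tPi_{\Sigma,w(\Sigma)}$ and $\tPi_{w(\Sigma),\Sigma}$ are coprime, $\tPi_{w(\Sigma),\Sigma}$ divides $g_w$. The same argument works verbatim in $K$-theory with $\mathcal D_c$, whose diagonal coefficients are again nonzero.
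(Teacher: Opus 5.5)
Your proof is correct and follows essentially the same route as the paper: compare the $[w]$-coefficient of $f$ with that of $\Pi_\Sigma f \Pi_\Sigma^{-1}\in\H$ using the leading term of $D_w$, then conclude by coprimality of $\widetilde{\Pi}_{\Sigma,w(\Sigma)}$ and $\widetilde{\Pi}_{w(\Sigma),\Sigma}$. Your Step 3 spells out the triangularity argument that the paper leaves implicit when it writes ``since $\H$ is a free $S$-module with basis $\{D_w\}$''. That argument shows no $D_c$ with $c>w$ can occur in the Demazure expansion of $\Pi_\Sigma f\Pi_\Sigma^{-1}$.
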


\begin{proof}
    Let $\alpha = \prod_{\beta \in \mathbb{\Phi}_+ \cap w \mathbb{\Phi}_-} \beta$. By Theorem 11.1.2 (e2) in \cite{Ku}, the leading term of $D_w$ is $\frac{(-1)^{\ell(w)}}{\alpha} [w]$, i.e. $D_w = \frac{(-1)^{\ell(w)}}{\alpha} [w] + \sum_{c< w} d_{w,c} [c]$ for some $d_{w,c} \in Q$. So $f = \frac{(-1)^{\ell(w)}g_w}{\alpha} [w] + \sum_{c \not\geq w} f_c [c]$ for some $f_c\in Q$, and
    
    $$\Pi_\Sigma f \Pi_{\Sigma}^{-1} = \frac{(-1)^{\ell(w)}g_w}{\alpha} \frac{\tPi_{\Sigma,w(\Sigma)}}{\tPi_{w(\Sigma),\Sigma}}[w] + \sum_{c\not 
    \geq w} f_c \frac{\Pi_\Sigma}{\Pi_{c(\Sigma)}}[c] \in \H$$
    
    \noindent Since $\H$ is a free $S$-module with basis $\{D_w\}_{w\in \tW}$, we obtain
    $$\frac{g_w}{\alpha} \frac{\tPi_{\Sigma,w(\Sigma)}}{\tPi_{w(\Sigma),\Sigma}} \in \frac{1}{\alpha} S$$
    Since $\tPi_{\Sigma,w(\Sigma)}$ and $\tPi_{w(\Sigma),\Sigma}$ are coprime, this implies that $\tPi_{w(\Sigma),\Sigma} | g_w$ in $S$.
\end{proof} 

\begin{proposition}
    \label{H_Sigma_basis}
    Assume that for each $w\in \tW$, there exists some $A_w \in \H_\Sigma$ such that \mbox{$A_w= \tPi_{w(\Sigma),\Sigma} D_w + \sum_{c < w} g_{w,c} D_c$}, where $g_{w,c} \in S$. Then $\{ A_w\}_{w\in\tW}$ is a basis of $\H_\Sigma$ viewed as an $S$-module.
\end{proposition}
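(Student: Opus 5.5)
The plan is to prove linear independence and spanning separately, using that $\{D_w\}_{w\in\tW}$ is a left $S$-basis of $\H$ (Proposition \ref{tH_description}) and the leading-term divisibility of Lemma \ref{correct_leading_term}. Throughout, "$S$-module" means left $S$-module. $\H_\Sigma$ is closed under left multiplication by $S$, since $S\subset\H$ and $\Pi_\Sigma^{-1} S \Pi_\Sigma = S$.

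For linear independence, suppose $\sum_{w\in F} h_w A_w = 0$ with $F$ finite and all $h_w\neq 0$. Pick $w\in F$ maximal in the Bruhat order among the elements of $F$. Expand in the basis $\{D_c\}$. The coefficient of $D_w$ receives $h_w\tPi_{w(\Sigma),\Sigma}$ from $A_w$. Any other $A_{w'}$ with $w'\in F$ only involves $D_c$ with $c\le w'$, and $c=w$ would force $w\le w'$, hence $w=w'$ by maximality. So the coefficient of $D_w$ is $h_w\tPi_{w(\Sigma),\Sigma}$. Since $S$ is a domain and $\tPi_{w(\Sigma),\Sigma}\neq 0$, this gives $h_w=0$, a contradiction.

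For spanning, take $f\in\H_\Sigma$ and write $f=\sum_{c} g_c D_c$ with $g_c\in S$, a finite sum. Induct on the finite set $\operatorname{supp}(f)=\{c : g_c\neq 0\}$, measured for instance by the multiset of lengths, or more cleanly by the order ideal it generates. Choose $w\in\operatorname{supp}(f)$ maximal in Bruhat order. Then every other $c$ in the support satisfies $c\not\ge w$, so $f=g_wD_w+\sum_{c\not\ge w} g_cD_c$. By Lemma \ref{correct_leading_term}, $g_w=h\,\tPi_{w(\Sigma),\Sigma}$ for some $h\in S$.

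Set $f' = f - hA_w$. This lies in $\H_\Sigma$ because $hA_w\in\H_\Sigma$. Its expansion no longer contains $D_w$. The new terms $D_c$ introduced all have $c<w$. Hence $\operatorname{supp}(f')$ is contained in $(\operatorname{supp}(f)\setminus\{w\})\cup\{c<w\}$. To make the induction terminate, measure $f$ by the finite lower set $L(f)=\{c : c\le c' \text{ for some } c'\in\operatorname{supp}(f)\}$, which is finite since Bruhat intervals below an element are finite. Then $L(f')\subseteq L(f)$. Next, record the number of maximal elements of $L(f)$ that lie in the support, together with $|L(f)|$. If $w$ is no longer needed, $L$ shrinks strictly.

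More cleanly, use induction on the pair $(|L(f)|, \#\{c\in L(f): g_c\neq 0\})$, ordered lexicographically. Removing $w$ either shrinks $L$ or keeps $L$ while strictly reducing the support count restricted to the maximal elements. I expect this bookkeeping to be the only mildly delicate point, and it is routine. The main substantive input is Lemma \ref{correct_leading_term}, which is already available. The process ends at $0$ and expresses $f$ as an $S$-combination of the $A_w$.

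In the extended affine case, Bruhat order on $\tW=\Waff\rtimes\Omega$ compares only elements with the same $\Omega$-component. Lower sets are still finite, so the same argument applies verbatim.
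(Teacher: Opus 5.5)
Your proposal is correct and follows the paper's proof: linear independence comes from unitriangularity against the $S$-basis $\{D_w\}$, and spanning comes from peeling off a Bruhat-maximal term of the support via Lemma \ref{correct_leading_term} and subtracting $\frac{g_w}{\tPi_{w(\Sigma),\Sigma}}A_w$. Your termination bookkeeping can be simplified: since $w$ is maximal in $\operatorname{supp}(f)$, one has $L(f')\subseteq L(f)\setminus\{w\}$, so $|L(f)|$ strictly drops at every step and the second component of your lexicographic measure is never needed.
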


\begin{proof}
The elements $D_w$ are linearly independent over $Q$, so $A_w$ are also linearly independent. We prove that they span $\H_\Sigma$ by induction with respect to the Bruhat order. Let $f = \sum_{c\in \tW} g_c D_c \in \H_\Sigma$, with $g_c \in S$. Pick some $w\in \tW$ such that $f = g_w D_w + \sum_{c\not \geq w} g_c D_c$. Lemma \ref{correct_leading_term} implies that $\frac{g_w}{\tPi_{w(\Sigma),\Sigma}}  \in S$. The coefficient of $[w]$ in $f - \frac{g_w}{\tPi_{w(\Sigma),\Sigma}}  A_w$ is $0$, so the conclusion follows by induction.
\end{proof}

\begin{remark}
    It is not clear to us whether such a basis always exists. The only case in which we know it does is when such a basis comes from geometry, i.e. when $\Sigma$ is the collection of weights of a representation $M$, for a gluable pair $(V,M)$. \\
    The algebra $\H_\Sigma$ is a $\tW$-filtered algebra, as a subalgebra of the $\tW$-graded algebra $Q[\tW]$. The existence of a basis as in Proposition \ref{H_Sigma_basis} is equivalent to an isomorphism of graded $S$-modules:
    $$\operatorname{gr}(\H_\Sigma) \cong \bigoplus_{w\in \tW} \frac{\tPi_{w(\Sigma),\Sigma}}{\prod_{\beta \in \mathbb{\Phi}_+ \cap w \mathbb{\Phi}_-} \beta} \cdot S \cdot [w]$$
\end{remark}

\subsection*{An anti-involution and a bilinear form in the finite type}

Consider the anti-involution $\tau_0 : \H \rightarrow \H$ defined by 
$$\tau_0\left(\sum_{w\in \tW} g_w D_w\right) = \sum_{w\in \tW} D_{w^{-1}} g_w$$
Let
$$\Delta \coloneqq \prod_{\alpha \in \mathbb{\Phi}_+} \alpha$$
Then $\tau_0$ is the restriction to $\H$ of the anti-involution $\tau_0 : Q[\tW]\rightarrow Q[\tW]$ defined by 
$$\tau_0(f[w]) = \Delta [w^{-1}] \Delta^{-1} f= (-1)^{\ell(w)} [w^{-1}] f$$

\noindent for $f\in Q$. Define the anti-involution 
$\tau_\Sigma : Q[\tW] \rightarrow Q[\tW]$ by $$\tau_\Sigma = \Pi_\Sigma^{-1} \tau_0 \Pi_\Sigma$$

Since $\tau_\Sigma(\H) = \Pi_\Sigma^{-1} \H \Pi_\Sigma$, we have $\H_\Sigma = \H \cap \tau_{\Sigma}(\H)$. Therefore, $\tau_\Sigma$ restricts to an anti-involution of $\H_\Sigma$.

\noindent The $K$-theoretic analogue of $\Delta$ is
$$\varDelta \coloneqq \prod_{{\alpha \in \mathbb{\Phi}_+}} (1-e^{-\alpha})$$

Now assume that the initial generalized Cartan matrix is of finite type, so $\mathbb{G}$ is a finite dimensional reductive group and $\tW=W$ is its Weyl group. Also assume that $\Sigma$ is finite. Define the symmetric bilinear form $\langle - , -\rangle_\Sigma : Q \times Q \rightarrow Q^W$ by
$$\langle f,g\rangle_\Sigma \coloneqq \sum_{w\in W} w\left( \frac{fg\Pi_\Sigma}{\Delta}\right) = \frac{\sum_{w\in W}(-1)^{\ell(w)} \cdot w(fg\Pi_\Sigma)}{\Delta}$$

\noindent Recall that $Q[W]$ acts on $Q$. A direct computation shows that $\tau_\Sigma : Q[W] \rightarrow Q[W]$ satisfies

$$\langle z.f, g \rangle_\Sigma = \langle f, \tau_\Sigma(z).g\rangle_{\Sigma}$$

\noindent for all $f,g\in Q$ and $z\in Q[W]$.

\noindent Moreover, the same formula defines a bilinear map $\langle - , -\rangle_\Sigma : S \times S\Pi_{\Sigma}^{-1} \rightarrow S^W$. Recall that $\H_\Sigma$ acts on $S$ and on $S \Pi_\Sigma^{-1}$. Then $\tau_\Sigma : \H_\Sigma \rightarrow \H_\Sigma$ satisfies a similar adjointness equation.

\subsection*{A spherical version of $\H_\Sigma$}

Let $J\subset \mathbb{\Pi}$ be a subset of simple roots generating a root subsystem of finite type, so $W_J \subset W$ is finite. Let $e = \frac{1}{|W_J|} \sum_{w\in W_J} [w] \in \C[W_J] \subset \H$. As $e$ is idempotent in $\H$, $e\H e$ is an algebra with unit $e$. Recall the action of $Q[\tW]$ on $Q$, and consider $eQ \subset Q$. The restriction of the action of $Q[\tW]$ makes $eQ$ into an $e Q[\tW] e$-module. We also consider $eS \subset eQ$. Explicitly $eS\cong S^{W_J}$ and $eQ \cong Q^{W_J}$, but we will not use this.

\begin{lemma}
\label{tHsph_description}
    $e \H e = \{ \tf \in e Q[\tW] e \mid \tf.(eS) \subset eS\}$
\end{lemma}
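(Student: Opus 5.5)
We prove the two inclusions separately, taking the characterization $\H = \{ f\in Q[\tW] \mid f.S \subset S\}$ from Proposition \ref{tH_description} as the main input.

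\emph{The inclusion $\subseteq$.} Let $f\in\H$. Then $efe \in eQ[\tW]e$, since $e\in \C[W_J]\subset Q[\tW]$. For $s\in S$ we have $(efe).(es) = ef.(es)$ because $e^2=e$. Since $e\in\H$ and $\H.S\subset S$, we get $es\in S$, then $f.(es)\in S$, then $e.(f.(es))\in eS$. Hence $efe.(eS)\subset eS$.

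\emph{The inclusion $\supseteq$.} Let $\tf\in eQ[\tW]e$ with $\tf.(eS)\subset eS$. Then $\tf = e\tf e$, so for any $s\in S$,
$$\tf.s = \tf.(e.s) \in eS\subset S.$$
By Proposition \ref{tH_description}, $\tf\in\H$, and therefore $\tf = e\tf e\in e\H e$.

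The only step that needs care is checking that the action of $Q[\tW]$ on $Q$ is a genuine algebra action, so that $(ab).x = a.(b.x)$. This holds because $Q[\tW]$ acts as a twisted group algebra, with $f[w].g = f\,w(g)$.

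We also need $e$ to act as the projection onto $W_J$-invariants, so that $eS\subset S$. This uses that $|W_J|$ is invertible in $\C$, which holds because $W_J$ is finite, and it is why $e\in\H$ makes sense. Concretely, $e$ lies in $\C[W_J]\subset\H$ since each $[s_i] = 1-\alpha_i D_{s_i}\in\H$.

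There is no real obstacle. The whole argument is the observation that $\tf = e\tf e$ lets us pass between the action on $S$ and the action on $eS$.
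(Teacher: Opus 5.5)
Your proof is correct and follows the paper's argument. The forward inclusion is the same, and for the converse you reduce to $\tf.S\subset S$ and apply Proposition \ref{tH_description}, as the paper does; the one difference is that the paper completes $e$ to orthogonal idempotents $e_1=e,e_2,\ldots,e_k$ and notes that $\tf$ kills $e_iS$ for $i\geq 2$, whereas you use $\tf=\tf e$ directly, which is the same fact stated more economically.
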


\begin{proof}
    If $\tf = efe$, with $f\in \H$, then
    $$\tf .eS = efe.S \subset ef.S \subset e.S$$
    because $f.S\subset S$.
    
    Conversely, let $\tf \in e Q[\tW]e$ such that $\tf.(eS) \subset eS$.
    Complete $e=e_1$ to a set of orthogonal idempotents $e_1, e_2, \ldots e_k \in \C[W_J] \subset \H$ with $e_1 + e_2 + \ldots + e_k = 1 $. So $S = e_1S \oplus e_2S \oplus \ldots \oplus e_k S$. Since $\tf. e_i S =0$ for $i \geq 2$, we have $\tf . S \subset S$. Then, by Proposition \ref{tH_description}, we have $\tf \in \H$, so $\tf = e \tf e \in e\H e$.
\end{proof}

We assume that $\Sigma$ is $W_J$-invariant, so $\Pi_{\Sigma}^{-1} e \Pi_\Sigma = e$. Thus $\Pi_\Sigma^{-1} (-) \Pi_\Sigma$ restricts to an automorphism of $e Q[\tW] e$. Recall the $Q[\tW]$-module $Q \Pi_\Sigma^{-1}$ defined before. We consider its subset $eS \Pi_\Sigma^{-1} \coloneqq \{ e g \Pi_\Sigma^{-1} \mid g \in S\}$.

\begin{lemma}
\label{H_Sigma_sph_description}
    $$e \H_\Sigma e = e\H e \cap \Pi_\Sigma^{-1} e \H e \Pi_\Sigma = \{\tf\in e Q[\tW] e \mid \tf . (eS) \subset eS\text{ and } \tf.(eS\Pi_\Sigma^{-1}) \subset eS\Pi_\Sigma^{-1}\} $$
\end{lemma}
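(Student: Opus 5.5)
We prove the two equalities by a chain of inclusions, treating $\Pi_\Sigma^{-1}(-)\Pi_\Sigma$ as an algebra automorphism of $Q[\tW]$ that fixes $e$. This uses the $W_J$-invariance of $\Sigma$: for $w\in W_J$ we have $\Pi_{w(\Sigma)}=\Pi_\Sigma$, so $\Pi_\Sigma^{-1}[w]\Pi_\Sigma=[w]$. Denote by $R$ the right-hand set. The plan is to show
$$e\H_\Sigma e\subset e\H e\cap \Pi_\Sigma^{-1}e\H e\Pi_\Sigma\subset R\subset e\H_\Sigma e .$$

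\textbf{First inclusion.} Let $f\in\H_\Sigma$. Then $efe\in e\H e$, since $e\in\C[W_J]\subset\H$. Write $f=\Pi_\Sigma^{-1}g\Pi_\Sigma$ with $g\in\H$. Then
$$efe=\Pi_\Sigma^{-1}(ege)\Pi_\Sigma\in\Pi_\Sigma^{-1}e\H e\Pi_\Sigma .$$

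\textbf{Second inclusion.} Let $\tf$ lie in the intersection. Lemma \ref{tHsph_description} gives $\tf.(eS)\subset eS$. Since $\tf=\Pi_\Sigma^{-1}\tilde g\Pi_\Sigma$ with $\tilde g\in e\H e$, the second condition follows from the twisted version of Lemma \ref{tHsph_description}. Concretely, the map $g\mapsto g\Pi_\Sigma^{-1}$ identifies $Q$ with $Q\Pi_\Sigma^{-1}$. Under it, the action of $z$ on $Q\Pi_\Sigma^{-1}$ corresponds to the action of $\Pi_\Sigma z\Pi_\Sigma^{-1}$ on $Q$; this is the computation already used in the proof of Proposition \ref{H_Sigma_is_normalizer}. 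Because $e$ is fixed by conjugation, $eS\Pi_\Sigma^{-1}$ corresponds to $eS$. Hence $\tf.(eS\Pi_\Sigma^{-1})\subset eS\Pi_\Sigma^{-1}$ is equivalent to $\tilde g.(eS)\subset eS$, which holds by Lemma \ref{tHsph_description}.

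\textbf{Third inclusion.} Let $\tf\in R$. Repeat the idempotent argument from the proof of Lemma \ref{tHsph_description}. Complete $e=e_1$ to orthogonal idempotents $e_1,\dots,e_k\in\C[W_J]$ summing to $1$. Then $S=\bigoplus_i e_iS$, and $\tf.e_iS=0$ for $i\ge2$ because $\tf=\tf e$. Hence $\tf.S\subset S$.

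The same decomposition holds on the twisted side: $S\Pi_\Sigma^{-1}=\bigoplus_i e_iS\Pi_\Sigma^{-1}$. This is because $\C[W_J]$ acts on $S\Pi_\Sigma^{-1}$ via the untwisted action on the coefficient. Indeed, $[w].(g\Pi_\Sigma^{-1})=w(g)\Pi_\Sigma^{-1}$ for $w\in W_J$, since $\Pi_\Sigma/\Pi_{w(\Sigma)}=1$. So $\tf.(S\Pi_\Sigma^{-1})\subset S\Pi_\Sigma^{-1}$.

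Proposition \ref{H_Sigma_is_normalizer} then gives $\tf\in\H_\Sigma$, and therefore $\tf=e\tf e\in e\H_\Sigma e$.

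\textbf{Main obstacle.} The only delicate point is checking carefully that conjugation by $\Pi_\Sigma$ fixes $\C[W_J]$ and that the $\C[W_J]$-action on $Q\Pi_\Sigma^{-1}$ is untwisted. Both reduce to $\Pi_{w(\Sigma)}=\Pi_\Sigma$ for $w\in W_J$, which holds as an equality of formal products because $\Sigma$ is $W_J$-invariant as a collection with multiplicities. In the infinite (affine) case one should note that the ratio $\Pi_\Sigma/\Pi_{w(\Sigma)}$ is then literally $1$ in $Q$.
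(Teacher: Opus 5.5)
Your proof is correct and follows essentially the same route as the paper: you multiply by $e$, using that conjugation by $\Pi_\Sigma$ is an algebra automorphism fixing $e$; you translate the twisted condition via $\tf.(eS\Pi_\Sigma^{-1})\subset eS\Pi_\Sigma^{-1} \iff (\Pi_\Sigma\tf\Pi_\Sigma^{-1}).(eS)\subset eS$; and you rerun the orthogonal-idempotent argument of Lemma \ref{tHsph_description} to land in $\H_\Sigma$. The only cosmetic difference is that you apply the idempotent decomposition directly to $S\Pi_\Sigma^{-1}$ and then invoke Proposition \ref{H_Sigma_is_normalizer}, whereas the paper applies Lemma \ref{tHsph_description} to $\Pi_\Sigma\tf\Pi_\Sigma^{-1}$.
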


\begin{proof}
The second equality follows from Lemma \ref{tHsph_description}, because $\tf.(eS\Pi_\Sigma^{-1}) \subset eS\Pi_\Sigma^{-1}$ if and only if $(\Pi_\Sigma \tf \Pi_\Sigma^{-1}) . (eS) \subset eS$. Now, we prove the first equality.

Multiplying  $\H_\Sigma = \H \cap \Pi_\Sigma^{-1} \H \Pi_\Sigma$ by $e$ on both sides, we have $e \H_\Sigma e \subset e \H e \cap \Pi_\Sigma^{-1} e \H e \Pi_\Sigma$.

Conversely, let $\tf\in e Q[\tW] e$ such that $\tf . (eS) \subset eS\text{ and } \tf.(eS\Pi_\Sigma^{-1}) \subset eS\Pi_\Sigma^{-1}$. As in the proof of Lemma \ref{tHsph_description}, the first inclusion implies $\tf \in \H$ and the second inclusion implies $\Pi_\Sigma \tf \Pi_\Sigma^{-1} \in \H$. That is, $\tf \in \H_\Sigma \coloneqq \H \cap \Pi_\Sigma^{-1} \H \Pi_\Sigma$. So $\tf = e \tf e \in e \H_\Sigma e$.
\end{proof}

\section{Geometric construction of $\H_\Sigma$}
\label{section_finite}

Let $G$ be a complex reductive group. Fix $T\subset B\subset G$ a maximal torus and a Borel subgroup, and let $\B\cong G/B$ be the flag variety of $G$. Let $\Phi, \Phi_+, \Pi$ be the set of roots, positive roots and simple roots, respectively. Let $V$ be a finite dimensional representation of $G$ and let $M \subset V$ be a $B$-stable subspace. Consider the vector bundle $\tM \coloneqq G\times^B M \xrightarrow{\pi} G/B\cong \B$. The map $\tM \rightarrow \B \times V$, $(gB,v) \mapsto (gB,g.v)$ is a closed embedding of $\tM$ into the trivial vector bundle $\B \times V$. Composing with the projection to the second factor, we obtain a proper map $\mu:\tM \rightarrow V$, $(gB,v) \mapsto g.v$.

We define the following analogue of the Steinberg variety $Z\coloneqq \tM \times_V \tM$. The map $Z \rightarrow \B \times\B\times V$, $(g_1B,g_2B,v) \mapsto (g_1B,g_2B, g_1.v)$ is a closed embedding. Projecting on the first two factors, we obtain a map $\pi^2 : Z\rightarrow \B\times \B$. The Bruhat decomposition gives $\B \times \B = \bigsqcup_{w\in W} (\B\times\B)_w$, where $(\B\times\B)_w\coloneqq G.(1B,wB)$. So $Z = \bigsqcup_{w\in W} Z_w$, where \mbox{$Z_w \coloneqq (\pi^2)^{-1}(\B\times \B)_w$}. A direct computation shows that $(\pi^2)^{-1}(1B, wB) = M \cap wM \subset V$, where $wM = \dot{w}M$ for some lift $\dot{w}\in G$ of $w\in W$. Since $\pi^2$ is $G$-equivariant, we obtain an isomorphism of $G$-equivariant vector bundles:

\begin{equation}
\label{finite_Zw_as_bundle}
\begin{tikzcd}
	Z_w & G \times^{B \cap wB w^{-1}} (M \cap wM)\\
    \\
    (\B\times\B)_w & G/(B\cap w B w^{-1})
	\arrow["\cong"', from=1-1, to=1-2]
	\arrow["\pi^2", from=1-1, to=3-1]
	\arrow[""', from=1-2, to=3-2]
	\arrow["\cong", from=3-1, to=3-2]
\end{tikzcd}  
\end{equation}

\begin{remark}
    For the adjoint representation $V=\g$ and $M = \mathfrak{n} \coloneqq \operatorname{Lie} [B,B]$, we recover the Springer resolution $\tM \cong T^* \B$ and $Z$ is the usual Steinberg variety. Then $\tM \times \tM \cong T^*(\B\times \B)$ and $Z_w$ is the conormal bundle to $(\B \times \B)_w$, see \textsection 3.3 in \cite{CG}.\\
    The case $V=\g$, $M=\mathfrak{b}\coloneqq \operatorname{Lie} B$ corresponds to the Grothendieck-Springer simultaneous resolution.
\end{remark}

For a variety $X$ with an action of an algebraic group $L$, denote by $\HBM{L}{X}$ its $L$-equivariant Borel-Moore homology with coefficients in $\C$. This is a module over $H^\bullet_L(\pt)$.

We use convolution in equivariant Borel-Moore homology, as in \textsection 2.7 in \cite{CG}.  We have $Z\subset \tM \times \tM$ and $Z\circ Z = Z$ inside $\tM \times \tM \times \tM$, so convolution in Borel-Moore homology endows $\HBM{G}{Z}$ with an $H_G^\bullet(\pt)$-algebra structure. Also, $Z \circ \B = \B$ inside $\tM \times \tM \times \{\pt\}$, so $\HBM{G}{\B}$ is a module over $\HBM{G}{Z}$. Here we identify $\B$ with the $0$-section in $\tM\times\{ \pt\}$.

We use notation from Section \ref{section_algebra} for $\mathbb{G}=G$. Explicitly, let $S = \C[\t]$, $Q=\C(\t)$, and let $\mathbb{W}=W$ be the Weyl group of $G$. For a $T$-representation $N$, let $\Sigma(N)$ be the collection of weights of $N$, counted with multiplicity. For brevity, put  $\Pi_{N} \coloneqq \Pi_{\Sigma(N)}$ and $\H_N \coloneqq \H_{\Sigma(N)}$. For the pair $(V,M)$ and for $w\in W$, the subspace $wM\coloneqq \dot{w} M$ is a $T$-subrepresentation of $V$, where $\dot{w} \in G$ is any lift of $w\in W$. We have $\Sigma(wM) = w(\Sigma(M))$. Let $\Pi_{wM}\coloneqq  \Pi_{\Sigma(wM)}= \Pi_{w(\Sigma(M))}$.

\begin{definition}
\label{finite_gluability_def}
    A pair $(V,M)$ is called gluable if $\frac{\Pi_{M}}{\Pi_{M\cap wM}}$ and $\frac{\Pi_{wM}}{\Pi_{M\cap wM}}$ are coprime in $S$, for all $w\in W$.
\end{definition}

The property of a pair $(V,M)$ being gluable depends not only on $M$, but also on the ambient space $V$, since the intersection $M\cap wM$ is taken inside $V$. If the pair $(V,M)$ is gluable, then one has equalities $\tPi_{M,wM} = \frac{\Pi_M}{\Pi_{M\cap wM}}$ and $\tPi_{wM,M} = \frac{\Pi_{wM}}{\Pi_{M\cap wM}}$. As explained below, $\HBM{G}{\B}$ is a free $S$-module of rank $1$ generated by the fundamental class $[\B]$. The goal of this section is to prove the following.

\begin{theorem}
\label{finite_H(Z)=H_M}
    Assume that the pair $(V,M)$ is gluable. Then there exists an algebra isomorphism $\HBM{G}{Z} \cong\H_M$. This isomorphism intertwines the action of $\HBM{G}{Z}$ on $\HBM{G}{\B}$ by convolution and the action of $\H_M$ on $S$.
\end{theorem}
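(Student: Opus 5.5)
We will realize $\HBM{G}{Z}$ inside $Q[W]$ through its faithful action on a localized module, and then identify the image by using the two characterizations of $\H_M$ in Proposition \ref{H_Sigma_is_normalizer}.

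\emph{Step 1: the module and localization.} We have $\HBM{G}{\B}\cong H^\bullet_T(\pt)=S$, freely generated by $[\B]$. We use the Bruhat stratification $Z=\bigsqcup_w Z_w$ together with (\ref{finite_Zw_as_bundle}). Each $Z_w$ is a vector bundle over $(\B\times\B)_w$, and $(\B\times\B)_w$ is an affine bundle over $G/T$. Hence $\HBM{G}{Z_w}\cong H^\bullet_T(\pt)=S$ up to a degree shift, and everything is even. The long exact sequences for a filtration by closed unions of strata then split. This shows that $\HBM{G}{Z}$ is a free $S$-module of rank $|W|$, and that it is filtered by Bruhat order with $\operatorname{gr}_w\cong S\cdot[\overline{Z_w}]$. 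We then pass to $T$-fixed points. Since $Z^T$ is the finite set $\{(uB,vB,0)\}$, localization gives an embedding $\HBM{G}{Z}\hookrightarrow \HBM{T}{Z}\otimes_S Q\cong \bigoplus_{u,v}Q$. Convolution becomes matrix multiplication, weighted by Euler classes of $T_{uB}\tM$. Concretely, via the action on $\HBM{G}{\B}\otimes Q=\bigoplus_u Q[uB]$, this yields an algebra map $\Phi:\HBM{G}{Z}\to Q[W]$. The same formalism applied to $Z'=\B\times\B$ recovers $\H\cong\HBM{G}{\B\times\B}$ via Demazure operators. We will check that $\Phi$ is injective using the free basis and the leading terms of $\Phi$ applied to the classes $[\overline{Z_w}]$.

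\emph{Step 2: two integrality constraints.} The map $Z\to\B\times\B$ is the restriction of the zero section composed with a proper map. We will write $\Phi$ as a composition involving the pullback $i^*$ from $\tM\times\tM$. The image of $\Phi$ preserves $S=\HBM{G}{\B}$, because $\HBM{G}{Z}$ acts on $\HBM{G}{\B}$; by Proposition \ref{tH_description}, this gives $\Phi(\HBM{G}{Z})\subset\H$. It also acts on $\HBM{G}{\tM}$. Via Thom isomorphism and localization, $\HBM{G}{\tM}\otimes Q$ is identified with $Q\Pi_M^{-1}$, since the fundamental class of $\tM$ restricts at $uB$ to $[uB]$ divided by the Euler class $u(\Pi_M)$. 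The integral lattice $\HBM{G}{\tM}\cong S$ corresponds to $S\Pi_M^{-1}$, and the twisted action on it is exactly the one defined before Proposition \ref{H_Sigma_is_normalizer}. Preservation of this lattice gives $\Phi(\HBM{G}{Z})\subset\H_M$ by Proposition \ref{H_Sigma_is_normalizer}.

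\emph{Step 3: surjectivity.} This is the main step, and it is where gluability enters. We will compute the leading term of $\Phi([\overline{Z_w}])$ at $[w]$. Localizing at $(1B,wB)$, the class of the vector bundle $Z_w$ with fiber $M\cap wM$ inside the fiber $M\oplus wM$ of $\tM\times\tM$ contributes the Euler class of the normal directions, namely $\frac{\Pi_M\Pi_{wM}}{\Pi_{M\cap wM}}$. After normalization, the coefficient should equal $\frac{\Pi_{wM}}{\Pi_{M\cap wM}}$ times the leading term of $D_w$. By gluability this is exactly $\tPi_{wM,M}$. Hence $A_w:=\Phi([\overline{Z_w}])$ has the form required by Proposition \ref{H_Sigma_basis}, with lower terms in $\bigoplus_{c<w}SD_c$; this holds because $\Phi([\overline{Z_w}])\in\H$ and is supported on $c\le w$. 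Proposition \ref{H_Sigma_basis} then says that the $A_w$ form an $S$-basis of $\H_M$, so $\Phi$ is onto $\H_M$. Intertwining with the action on $S$ holds by construction.

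The hard part is the precise fixed-point computation in Step 3. It requires tracking the signs, the $[w]$ versus $[w^{-1}]$ conventions, and the correct normalization of convolution so that the leading coefficient is $\tPi_{wM,M}$ and not $\tPi_{M,wM}$. We will first test it on $w=s_i$ and on $V=\g$, $M=\mathfrak n$, where the answer must reproduce \cite{GKV}.
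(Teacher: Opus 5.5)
Your proposal is correct and is essentially the paper's proof: the same Bruhat-stratum basis $[\overline{Z_w}]$, the same two lattice conditions ($\HBM{G}{\B}\cong S$ and, via the zero-section pushforward, $\HBM{G}{\tM}\leftrightarrow \Pi_M^{-1}S$) feeding Proposition \ref{H_Sigma_is_normalizer}, the same leading coefficient $\Pi_{wM}/\Pi_{M\cap wM}=\tPi_{wM,M}$ under gluability, and Proposition \ref{H_Sigma_basis} for surjectivity. The only difference is that you compute the leading term by fixed-point localization at $(1B,wB)$. The paper instead realizes the action map integrally as $\theta\circ i^*p_*$ through $\B\times\tM$; note that restricting both factors of $\tM\times\tM$, as your Step 2 wording suggests, would introduce an extra factor $\Pi_M$. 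The paper then reads off the coefficient by restricting to the open stratum $(\B\times\B)_w$ and applying the Thom isomorphism to $M\cap wM\subset wM$.
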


If $G$ has a simply connected derived group, the analogous theorem holds when equivariant Borel-Moore homology is replaced with equivariant $K$-theory with coefficients in $\C$. The proof is similar, using results in \cite{KL} and \cite{CG}.

\begin{remark}
\label{finite_gluability_remark}
Let $G = G' \times \Cdil$, where $G'$ is a reductive group and $\Cdil=\C^*$. Let $(V,M)$ be a pair such that $\Cdil$ acts on $V$ by dilations $z.v\coloneqq zv$. Let $T=T'\times \Cdil$, where $T'$ is a maximal torus in $G'$. In this case, gluability of $(V,M)$ is equivalent to the property that for all $w\in W$ and $\lambda \in X^*(T')$, $\dim (M\cap wM)_\lambda = \min(\dim M_\lambda, \dim (wM)_\lambda)$, where $(-)_\lambda$ is the $\lambda$-weight space. This holds if, for each $\lambda\in X^*(T')$, the space $M_\lambda$ is either $V_\lambda$ or $0$.
\end{remark}

\begin{remark}
\label{finite_Springer_remark}
    In the case of the Springer resolution $(V,M)=(\g',\mathfrak{n}')$, Theorem \ref{finite_H(Z)=H_M} and its $K$-theoretic analogue recover the geometric realizations of the graded/degenerate affine Hecke algebra \cite{Lu} and the affine Hecke algebra \cite{KL}, \cite{Gi2}. Constructions of these algebras in terms of poles and residues are given in \cite{GKV}. The algebra $\H_q$ studied in \cite{GKV} and our $\H_M$ are related by $\H_q = \Pi_M \H_M \Pi_M^{-1}$ inside $Q[W]$. Conditions (1.3.1) and (1.3.2) in \cite{GKV} correspond to our conditions (1) and (2a), and condition (1.3.3) corresponds to:
    
    \noindent (3'): For any $w\in \tW$, the rational function $\frac{f_w}{\tPi_{\Sigma,w(\Sigma)}}$ has no singularities other than finitely many first order poles at $\th_{\alpha}$, where $\alpha \in \bbPhire_+$.
    
    \noindent A straightforward modification of the argument in Theorem \ref{H_Sigma_(1)(2)(3)} shows that 
    $$\Pi_M\H_M \Pi_M^{-1} = \{ f \in Q[W] \mid f \text{ satisfies conditions (1), (2a), (3')}\}.$$
\end{remark}

The swap morphism $\sw:Z \rightarrow Z$ defined by $(a,b)\mapsto (b,a)$ induces an anti-involution $\sw_*:\HBM{G}{Z} \rightarrow \HBM{G}{Z}$. Let $
    \langle -,- \rangle : \HBM{G}{\B} \times \HBM{G}{\B} \rightarrow \HBM{G}{\pt}$ denote convolution in $\{ \pt \} \times \tM \times \{ \pt \}$.

\begin{proposition}
    The map $\sw_* : \HBM{G}{Z} \rightarrow \HBM{G}{Z}$ is the adjoint anti-involution with respect to $\langle -,- \rangle$. Under the equivariant localization theorem, $\sw_*$ and $\langle -, -\rangle$ are identified with $\tau_{\Sigma(M)}$ and $\langle-,-\rangle_{\Sigma(M)}$ defined in Section \ref{section_algebra}.
\end{proposition}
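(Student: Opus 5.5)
We prove the two claims in order: first the abstract adjointness of $\sw_*$, then its identification with $\tau_{\Sigma(M)}$ and $\langle -,-\rangle_{\Sigma(M)}$ under localization.

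\textbf{Adjointness.} We argue purely with convolution formalism (\textsection 2.7 of \cite{CG}). For $z\in\HBM{G}{Z}$ and $a,b\in\HBM{G}{\B}$, the pairing $\langle z\star a, b\rangle$ is a convolution in $\{\pt\}\times\tM\times\tM\times\{\pt\}$. By associativity of convolution, both $\langle z\star a,b\rangle$ and $\langle a, \sw_*(z)\star b\rangle$ equal the same triple convolution $b^t\star z\star a$, where $b^t$ denotes $b$ viewed as a class in $\{\pt\}\times\tM$. The second expression is obtained by applying the swap of factors to the whole chain $\pt\times\tM\times\tM\times\pt$. Convolution is compatible with swapping factors, because pushforward and pullback commute with the flip isomorphism. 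The same compatibility shows that $\sw_*$ is an anti-homomorphism, and $\sw^2=\mathrm{id}$ gives that it is an involution.

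\textbf{Identification under localization.} Pass to $T$-fixed points. We have $\HBM{G}{\B}\cong\HBM{T}{\pt_B}$-type identifications, and localization gives $\HBM{G}{Z}\otimes_{S^W}Q^W\cong$ an algebra inside $Q[W]$. The embedding into $Q[W]$ comes from the fixed points $Z^T=\{(wB,w'B,0)\}$, with $Z^T\subset Z_{w^{-1}w'}$-components. This embedding is presumably the one used to prove Theorem \ref{finite_H(Z)=H_M}.

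The fixed-point classes $[wB]\in\HBM{T}{\B}$ localize with respect to the pairing in $\tM$ as follows. The point $(wB,0)$ is a fixed point of $\tM$, and its tangent weights are $w(\Phi_-)\cup w(\Sigma(M))$ up to sign conventions. By Atiyah--Bott, $\langle f,g\rangle=\sum_w w\!\left(fg\cdot\frac{\Pi_M}{\Delta}\right)$ up to the sign convention for $\Delta$. The factor $\Pi_M$ enters because the pairing intersects two copies of the zero section inside $\tM$, producing the Euler class of the fiber $M$; the factor $1/\Delta$ is the localization denominator on $\B$. This expression is $\langle f,g\rangle_{\Sigma(M)}$.

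Next we compute the action on fixed-point bases. The swap sends the fixed point $(wB,w'B)$ to $(w'B,wB)$. Hence, in the $Q[W]$ model, $\sw_*(f[w])$ is supported on $[w^{-1}]$. Its coefficient is obtained by comparing the localization denominators, namely the Euler classes of the tangent spaces of $Z$, or of the ambient $\tM\times\tM$, at the two fixed points. These normal data differ exactly by the ratio $\Pi_{wM}/\Pi_M$ and by the sign $(-1)^{\ell(w)}$ coming from $\Delta$. This is precisely the formula $\tau_{\Sigma(M)}=\Pi_M^{-1}\tau_0\Pi_M$.

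A cleaner route avoids computing the coefficients directly. The form $\langle-,-\rangle_{\Sigma(M)}$ is nondegenerate over $Q^W$ on $Q$, and the action of $\HBM{G}{Z}$ on $Q$ is faithful. The adjoint of any $z$ is therefore unique. Adjointness of $\sw_*$ with respect to $\langle-,-\rangle$ is established above, and adjointness of $\tau_{\Sigma(M)}$ with respect to $\langle-,-\rangle_{\Sigma(M)}$ is the direct computation noted in Section \ref{section_algebra}. Together these force $\sw_*=\tau_{\Sigma(M)}$ on $\HBM{G}{Z}\cong\H_M$.

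\textbf{Main obstacle.} The difficult step is matching the normalizations exactly: the signs and the precise Euler classes, such as the choice between $\Pi_M$ and $\Pi_{wM}$ and between $\Delta$ and $(-1)^{\ell(w)}$ factors. These must be consistent with the identification of Theorem \ref{finite_H(Z)=H_M}, so we would fix conventions there and check the case of a simple reflection.
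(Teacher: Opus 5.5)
Your adjointness argument ($z\star a=a\star\sw_*(z)$ plus associativity in $\{\pt\}\times\tM\times\tM\times\{\pt\}$) is essentially the paper's. So is your identification of the pairing: Atiyah--Bott on the smooth $\B$, with the Euler class $\Pi_M$ of $\tM\to\B$ in the numerator.

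Where you differ is the identification of $\sw_*$ with $\tau_{\Sigma(M)}$. The paper works on the basis: it uses $\sw_*([\overline{Z_w}])=[\overline{Z_{w^{-1}}}]$ and the leading-term formula of Lemma~\ref{finite_leading_term_ZW}. One checks that $\tau_{\Sigma(M)}$ sends the coefficient of $[w]$ in $\theta_0([\overline{Z_w}])$ exactly to the coefficient of $[w^{-1}]$ in $\theta_0([\overline{Z_{w^{-1}}}])$. This is a concrete version of your first, fixed-point sketch. Since the lemma says nothing about the lower coefficients $b_{w,y}$, it is implicitly a comparison of localized classes, stratum by stratum.

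Your ``cleaner route'' instead deduces $\theta_0\circ\sw_*=\tau_{\Sigma(M)}\circ\theta_0$ from uniqueness of adjoints. This needs no Euler-class or sign bookkeeping: even a nonzero scalar discrepancy between the two pairings is harmless, so the normalization worry in your ``main obstacle'' paragraph largely disappears. It also controls all coefficients at once.

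The one extra hypothesis your route uses is nondegeneracy, i.e.\ $\Pi_M\neq 0$: no zero $T$-weight in $M$, which is automatic in the $\Cdil$ setting of the introduction. If $\Pi_M=0$, the pairing vanishes identically and uniqueness of adjoints says nothing. Yet $\tau_{\Sigma(M)}$, which only involves ratios $\Pi_{w^{-1}M}/\Pi_M$, is still defined.

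To make your route precise, set $y=\theta_0(\sw_*z)-\tau_{\Sigma(M)}(\theta_0(z))\in Q[W]$. You do not need gluability or $\HBM{G}{Z}\cong\H_M$, only injectivity of $\theta_0$. Geometric adjointness and the intertwining $\rho'\circ\theta_0=\alpha\circ\rho_0$ (Lemma~\ref{finite_map_to_H0} and Proposition~\ref{finite_H_0_thm}) give $\langle a,y.b\rangle_{\Sigma(M)}=0$ for $a,b\in S$.

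Extend this to $a,b\in Q$ by $Q^W$-bilinearity, since $S$ spans $Q$ over $Q^W$. Nondegeneracy of the twisted trace form $(f,g)\mapsto\operatorname{Tr}_{Q/Q^W}(fg\Pi_M/\Delta)$ then gives $y.Q=0$. Finally $y=0$ by faithfulness of $Q[W]\cong\End_{Q^W}(Q)$ on $Q$. The relevant faithfulness is that of $Q[W]$, not of $\HBM{G}{Z}$ as you wrote, because $\tau_{\Sigma(M)}(\theta_0(z))$ is a priori only an element of $Q[W]$.
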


\begin{proof}
    Using the definition of convolution, for all $z\in \HBM{G}{Z}$ and $a\in \HBM{G}{\B}$, we have $z \star a =a \star \sw_*(z)$, where the first convolution is in $\tM\times\tM\times\{\pt\}$ and the second convolution is in $\{\pt\}\times\tM\times\tM$. Then, the associativity of convolution in $\{ \pt \} \times \tM \times \tM \times \{ \pt\}$ implies that $\sw_*$ is the adjoint anti-involution with respect to $\langle -, - \rangle$.

    The identification of $\langle-, -\rangle$ with $\langle-,-\rangle_{\Sigma(M)}$ follows from direct computations, as $\B$ is smooth. The identification of $\sw_*$ and $\tau_{\Sigma(M)}$ follows from Lemma \ref{finite_leading_term_ZW}, to be proved later, observing that $\sw_*\left([\overline{Z_w}]\right) = [\overline{Z_{w^{-1}}}]$.
\end{proof}

\subsection*{Proof of Theorem \ref{finite_H(Z)=H_M}}

We identify $H^\bullet_T(\pt) \cong \C[\t] = S$. Let $Z'\coloneqq \tM \cap (G/B\times M) \subset G/B \times V$, where $\tM \hookrightarrow G/B \times V$ is as before. A direct computation shows that the morphism $\alpha : G\times^B Z' \rightarrow G/B \times G/B \times V$, $(gB,z) \mapsto (gB, g.z)$ induces an isomorphism $G\times^B Z' \cong Z$, which commutes with projections on the first factor $G/B$. Let $Z_\Delta = Z_1 \coloneqq (\pi^2)^{-1}(\B_\Delta)$, where $\B_\Delta \subset \B \times \B$ is the diagonal copy of $\B$. Then, $Z_\Delta \circ Z_\Delta = Z_\Delta$, so $\HBM{G}{Z_\Delta}$ acquires an algebra structure. Since $Z_\Delta \cong \tM$ is smooth, $\HBM{G}{\tM}$ is the free $H^\bullet_{G}(\tM)$-module of rank $1$ generated by the fundamental class $[\tM]$. This interchanges the cup product in $H^\bullet_{G}(\tM)$ and the convolution algebra structure of $\HBM{G}{\tM}$, see Example 2.7.10(i) in \cite{CG}. We have $Z_\Delta \circ Z = Z$ inside $\tM \times \tM \times \tM$, so $\HBM{G}{Z}$ is a module over $\HBM{G}{Z_\Delta} \cong \HBM{G}{\tM}$. 

For a variety $X$ equipped with an action of $B$, there exists an induction isomorphism {$\operatorname{ind}_X :\HBM{B}{X} \xrightarrow{\cong} \HBM{G}{G\times^B X}$}, see \cite[\textsection 2.6]{BL} and \cite[\textsection 5.2.16]{CG}. As $B$ homotopy retracts to $T$, we have $\HBM{B}{X} \cong \HBM{T}{X}$. For $X=Z'$, we obtain an isomorphism $\HBM{G}{Z} \cong \HBM{B}{Z'} \cong \HBM{T}{Z'}$. For $X=M$, we obtain isomorphisms $$\HBM{G}{Z_\Delta} \cong \HBM{G}{\tM} \cong \HBM{B}{M} \cong \HBM{T}{M} = H^\bullet_T(\pt).[M] = S.[M]$$ where $H^\bullet_T(\pt).[M]$ is the free module of rank $1$ generated by the fundamental class of $M$.

\begin{lemma}
\label{finite_S_actions_agree}
    The action of $\HBM{G}{Z_\Delta}$ on $\HBM{G}{Z}$ by convolution is identified with the action of $H^\bullet_T(\pt)$ on $\HBM{T}{Z'}$ via the isomorphisms $\HBM{G}{Z} \cong \HBM{T}{Z'}$ and $\HBM{G}{Z_\Delta} \cong \text{H}^\bullet_T(\pt).[M]$ described above.
\end{lemma}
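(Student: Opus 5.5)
The plan is to reduce both sides to the same ``restriction along a base change'' picture and then compare them using the compatibility of convolution with induction. Write $Z \cong G\times^B Z'$, where $Z' = \{(gB,v)\in G/B\times V \mid v\in M,\ g^{-1}v\in M\}$ is the fiber of $Z$ over the base point $1B$ of the first factor. Similarly $Z_\Delta \cong G\times^B M$, with fiber $M$ over $1B$. The convolution $Z_\Delta\circ Z$ takes place in $\tM\times\tM\times\tM$. Since $Z_\Delta$ lies over the diagonal $\B_\Delta$, the composite $Z_\Delta\times_{\tM} Z$ is supported over the locus where the first two points of $\B$ coincide. I would first rewrite the convolution $c\star z$ for $c\in \HBM{G}{Z_\Delta}\cong H^\bullet_G(\tM)\cdot[\tM]$ as a cap product: $c\star z = p_1^*(c)\cap z$, where $p_1: Z\to \tM$ is the first projection. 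This is the standard statement (Example 2.7.10 in \cite{CG} and its proof) that convolution with a class supported on the diagonal of a smooth space acts by pullback and cap product, since the diagonal $\tM\to\tM\times\tM$ is a regular embedding and the fiber product reduces to $Z$ itself.

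Second, I would transport this through induction. The isomorphism $\operatorname{ind}: \HBM{B}{X}\to \HBM{G}{G\times^B X}$ is compatible with cap products by equivariant cohomology classes, via $H^\bullet_G(G\times^B X)\cong H^\bullet_B(X)$. This holds because induction is defined through $G\times X\to X$ and $G\times X\to G\times^B X$, both of which are smooth pullbacks or free quotients that commute with cap products. Applying this with $X=Z'$ and $X=M$, the map $p_1$ corresponds to the $B$-equivariant map $p_1': Z'\to M$, $(gB,v)\mapsto v$. The class $c = f\cdot[\tM]$ with $f\in H^\bullet_G(\tM)\cong H^\bullet_B(M)\cong H^\bullet_B(\pt)=S$ acts by $p_1'^*(f)\cap(-)$. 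Because $M$ is contractible equivariantly to a point, $p_1'^*(f)$ is simply the image of $f\in H^\bullet_B(\pt)$ in $H^\bullet_B(Z')$. So the action is the natural $H^\bullet_B(\pt)$-module structure on $\HBM{B}{Z'}$. Finally, I would restrict from $B$ to $T$ using the homotopy equivalence $B\simeq T$, which identifies $H^\bullet_B(\pt)\cong H^\bullet_T(\pt)=S$ compatibly with module structures.

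The main obstacle is the first step: making precise that the convolution by $Z_\Delta$ is a cap product with a pulled-back class. This requires care because $Z$ is singular. I would handle it by computing in the ambient smooth space $\tM\times\tM\times\tM$ with the diagonal in the first two factors, using the refined intersection (restriction with supports) definition of convolution from \textsection 2.7 of \cite{CG}. The key input is that restriction with supports to the diagonal of the external product $[\tM]\boxtimes z$, followed by projection, recovers $z$ with the cohomology class capped. A secondary point is to check that the identifications $\HBM{G}{Z_\Delta}\cong S\cdot[M]$ send $f[\tM]$ to $f[M]$ without a twist. This holds since induction sends fundamental classes to fundamental classes.
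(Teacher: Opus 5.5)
Your proposal is correct and takes essentially the same route as the paper. The paper also writes convolution with a class $\alpha\cap[Z_\Delta]$ as the cap product $p_1^*(\alpha)\cap(-)$, via base change and the projection formula. It then transports this through $\operatorname{ind}$, using that induction commutes with cap products and flat pullbacks, so that $p_1$ becomes $q_1\colon Z'\to M$, and finishes with the contractibility of $M$, which gives $H^\bullet_B(M)\cong H^\bullet_B(\pt)\cong S$.
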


\begin{proof}
    Let $a = \alpha \cap[Z_\Delta] \in \HBM{G}{Z_\Delta}$, where $\alpha\in \text{H}^\bullet_G(Z_\Delta)$. Let $b\in \HBM{G}{Z}$. 
    Let $p_1:Z\rightarrow \tM$ be the restriction to $Z$ of the first projection $\tM\times\tM \rightarrow \tM$. By base change and the projection formula, we have $a\star b = p_1^*(\alpha) \cap b$.

    Let $a=\text{ind}_M(\alpha'\cap [M])$ and $b=\text{ind}_{Z'}(b')$, where $\alpha'\in \text{H}^\bullet_B(M)$ and $b'\in \HBM{B}{Z'}$. Since $\text{ind}$ commutes with $\cap$ and with flat pullbacks, we have $a\star b = \text{ind}_{Z'}(q_1^* (\alpha') \cap b')$, where $q_1 : Z' \rightarrow M$ is the restriction to $Z'$ of the projection $\B\times M \rightarrow M$. Let $q:M\rightarrow \{\pt\}$ be the projection to a point. As $M$ is a vector space, it is contractible, so $q^* : \text{H}^\bullet_{B}({\pt}) \rightarrow \text{H}^\bullet_B(M)$ is an isomorphism. Thus $q_1^*(\alpha') \cap b' = (q^*)^{-1}(\alpha') . b'$, which is what we wanted to prove.
\end{proof}

Thanks to this lemma, we view $\HBM{G}{Z}$ as an $S$-module. The following is the homological version of Theorem 6.2.4 in \cite{CG}.

\begin{lemma}
\label{finite_S_mod_basis}
    The fundamental classes $\left\{\left[\overline{Z_w}\right]\right\}_{w\in W}$ form a basis of the $S$-module $\HBM{G}{Z}$.
\end{lemma}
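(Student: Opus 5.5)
We follow the standard cellular fibration argument of \cite[Theorem 6.2.4]{CG}, adapted to Borel--Moore homology. First, choose a total order $w_1, w_2, \ldots, w_N$ on $W$ refining the Bruhat order, so that each $Z_{\leq k} \coloneqq \bigsqcup_{i\leq k} Z_{w_i}$ is closed in $Z$. This is possible because $\overline{(\B\times\B)_w} = \bigsqcup_{y\leq w}(\B\times\B)_y$ and $Z_w = (\pi^2)^{-1}(\B\times\B)_w$. The result is a filtration of $Z$ by closed $G$-stable subvarieties $Z_{\leq k}$, and the successive differences are $Z_{\leq k}\setminus Z_{\leq k-1} = Z_{w_k}$.

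By \eqref{finite_Zw_as_bundle}, $Z_{w}$ is a $G$-equivariant vector bundle over $G/(B\cap wBw^{-1})$. The induction isomorphism, together with the Thom isomorphism and the fact that $B\cap wBw^{-1}$ retracts onto $T$, gives
$$\HBM{G}{Z_w}\cong \HBM{B\cap wBw^{-1}}{M\cap wM}\cong \HBM{T}{M\cap wM} = S\cdot[M\cap wM].$$
Hence $\HBM{G}{Z_w}$ is a free $S$-module of rank one generated by the fundamental class $[Z_w]$. Moreover, it is concentrated in degrees of a single parity, since $S$ lives in even degrees and $[Z_w]$ has degree $2\dim_\C Z_w$. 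One should check that this $S$-module structure agrees with the one from Lemma \ref{finite_S_actions_agree}. It suffices to note that both come from pulling back $H_T^\bullet(\pt)$ along the projection to the first factor of $\B$, as in the proof of that lemma, and that this is compatible with open restriction.

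Next, consider the long exact sequences of the closed--open pairs
$$\cdots\to \HBM{G}{Z_{\leq k-1}}\to \HBM{G}{Z_{\leq k}}\to \HBM{G}{Z_{w_k}}\to\cdots.$$
By induction on $k$, all three terms vanish in odd degrees. The connecting maps therefore vanish, and each sequence is a short exact sequence of $S$-modules. The right-hand term is free, so each sequence splits. Choose as lift of the generator $[Z_{w_k}]$ the fundamental class $[\overline{Z_{w_k}}]\in\HBM{G}{Z_{\leq k}}$; its restriction to the open piece $Z_{w_k}$ is $[Z_{w_k}]$. Induction on $k$ then shows that $\{[\overline{Z_{w_i}}]\}_{i\leq k}$ is an $S$-basis of $\HBM{G}{Z_{\leq k}}$, and the case $k=N$ is the lemma.

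The main obstacle is the parity-vanishing step. The real content there is that each $\HBM{G}{Z_w}$ is even, which reduces to the Thom isomorphism and the evenness of $H^\bullet_T(\pt)$. A secondary point is the compatibility of the $S$-module structures mentioned above. The rest is formal.
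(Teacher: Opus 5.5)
Your proof is correct and is essentially the paper's argument: you filter by a total order refining the Bruhat order, use even-degree concentration to split the closed--open long exact sequences, and lift each stratum's generator by the fundamental class of its closure. The only difference is cosmetic: the paper first uses $Z\cong G\times^B Z'$ and Lemma \ref{finite_S_actions_agree} to pass to $\HBM{T}{Z'}$, paved by the $T$-stable affine spaces $Z'_w$, whereas you work $G$-equivariantly with the strata $Z_w\cong G\times^B Z'_w$ directly and check the $S$-module compatibility stratum by stratum.
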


\begin{proof}
    Let $\pi:Z' \rightarrow G/B$ be the restriction of $G/B \times V \rightarrow G/B$. For $w\in W$, denote $Z'_w \coloneqq \pi^{-1}(BwB/B)$. As before, $\pi$ restricts to a $B$-equivariant vector bundle $Z'_w \rightarrow BwB/B$. In particular, $Z'_w$ is an affine space. Moreover, the isomorphism $G\times^BZ' \cong Z$ restricts to an isomorphism $G\times^B Z'_w \cong Z_w$. Thus, using Lemma \ref{finite_S_actions_agree}, it is enough to prove that the classes $\left\{\left[\overline{Z'_w}\right]\right\}_{w\in W}$ form a basis of the $S$-module $\HBM{T}{Z'}$.

    Completing the Bruhat order to a total linear order, we obtain a paving of $Z'$ by $T$-stable affine spaces $Z'_w$. Since the Borel-Moore homology of the strata is concentrated in even degrees, the associated long exact sequences break into short exact sequences. Thus, $\HBM{T}{Z'}$ is a free $S$-module with a basis given by $\left\{\left[\overline{Z'_w}\right]\right\}_{w\in W}$.
\end{proof}

Let $\rho_0 : \HBM{G}{Z} \rightarrow \End_{H_G^\bullet(\pt)} \left( \HBM{G}{\B}\right)$ be the convolution map induced by $Z\circ \B = \B$ in $\tM \times \tM \times \{ \pt \}$. Let $\rho : \HBM{G}{\B\times\B}\rightarrow \End_{H_G^\bullet(\pt)} \left( \HBM{G}{\B}\right)$ be the convolution map induced by $(\B\times \B)\circ \B = \B$ in $\B \times \B \times \{ \pt \}$. Let $p:Z\rightarrow \B\times\tM$ be the composition of $Z\hookrightarrow \tM\times\tM$ and $\pi\times\text{id}_{\tM} : \tM\times\tM \rightarrow\B\times\tM$. As $p$ is a closed embedding, it is proper. Let $i : \B\times\B \rightarrow \B\times\tM$ be induced by the $0$-section embedding $\B\hookrightarrow \tM$ on the second factor. Let $\iota_0$ be the composition of the maps $\HBM{G}{Z} \xrightarrow{p_*} \HBM{G}{\B\times\tM} \xrightarrow{i^*} \HBM{G}{\B\times\B}$, where $i^*$ is the Gysin pullback. This setup is similar to \cite[\textsection 5.4.22]{CG}, but in our case the intermediary space is $\B\times\tM$, and in their case it is $\tM \times \B$. Adapting the proof of Lemma 5.4.27 in \cite{CG}, we obtain:

\begin{lemma}
\label{finite_map_to_H0}
    The following diagram commutes
    \[\begin{tikzcd}
	\HBM{G}{Z} \\
	&&  \End_{H_G^\bullet(\pt)} \left( \HBM{G}{\B}\right) \\
	\HBM{G}{\B\times\B}
	\arrow["\rho_0", from=1-1, to=2-3]
	\arrow["\iota_0"', from=1-1, to=3-1]
	\arrow["\rho"', from=3-1, to=2-3]
\end{tikzcd}\]
\end{lemma}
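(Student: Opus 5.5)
We prove the commutativity $\rho_0 = \rho\circ\iota_0$ by comparing both maps on an arbitrary class $c\in\HBM{G}{\B}$, following the strategy of Lemma 5.4.27 in \cite{CG} with the roles of the two factors swapped. Since $\B$ is the zero section of $\tM$, it is convenient to regard $c$ as a class supported on the zero section. For $z\in \HBM{G}{Z}$ the convolution is
$$\rho_0(z)(c) = (p_{1})_*\bigl( (z\boxtimes [\tM])\cap \Delta^{\tM,!}(\,[\tM]\boxtimes c\,)\bigr),$$
where the middle factor carries the restriction with supports to $\tM\times\Delta_{\tM}$. More precisely, it equals $p_{13*}\bigl(\delta^!(z\boxtimes c)\bigr)$, with $\delta:\tM\times\tM\times\B\to\tM\times\tM\times\tM\times\B$ the diagonal on the middle factor and $\B\subset\tM$ the zero section. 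The result is supported on $Z\circ\B=\B$.

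\textbf{Step 1: rewrite $c$ through the zero section.} Write $c=i_{0*}c$ for the zero-section embedding $i_0:\B\hookrightarrow\tM$. Apply the projection formula and base change for the Gysin map to the cartesian square formed by $\B\times\tM\times\B\to\B\times\tM\times\tM$ and the diagonal. This moves the intersection with $\Delta_{\tM}$ to an intersection of $\B\times\tM$ (in the middle) with the zero section $\B\subset\tM$, i.e. a Gysin restriction $i^*$ along $i:\B\times\B\to\B\times\tM$.

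\textbf{Step 2: push $z$ forward.} The first factor of the output lies over $\B$ via $\pi$. Because $\pi:\tM\to\B$ is a vector bundle, pushing $z$ forward along $p=(\pi\times\mathrm{id})|_Z$ does not change the convolution with classes supported on $\B$. Pushforward commutes with the subsequent proper pushforward to the first factor and, by base change, with the Gysin restriction on the second factor. This identifies the convolution of $z$ with $c$ in $\tM\times\tM\times\{\pt\}$ with the convolution of $p_*z\in\HBM{G}{\B\times\tM}$ with $c$ in $\B\times\tM\times\{\pt\}$.

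\textbf{Step 3: restrict to $\B\times\B$.} Convolution in $\B\times\tM\times\{\pt\}$ against a class supported on the zero section equals convolution in $\B\times\B\times\{\pt\}$ of $i^*p_*z$ with $c$. This is the standard compatibility of convolution with Gysin restriction to a smooth subvariety, \cite[Prop. 2.7.23 / \S 5.4.22]{CG}. It uses that $\B\subset\tM$ is smooth of codimension $\dim M$, so the refined intersection with $\Delta_{\tM}$ restricted to $\B$ agrees with the intersection with $\Delta_\B$ after $i^*$, since $[\B]\cap\Delta_{\tM}$ meets cleanly. This gives $\rho_0(z)(c)=\rho(\iota_0 z)(c)$.

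The main obstacle is Step 3: checking that the refined Gysin pullbacks match, i.e. that the intersection of $\tM\times\B$ with the diagonal is clean with the correct excess bundle. I expect no excess term, because the zero section meets the diagonal transversally inside the fibers. If needed, I would reduce to $T$-equivariant classes via induction $\operatorname{ind}$ and check the identity on the fixed points $\B^T=W$ through localization.
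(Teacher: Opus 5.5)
Your proposal is correct and follows the same route as the paper, which simply adapts the proof of Lemma 5.4.27 in \cite{CG} with the two factors swapped. Pushing forward along $\pi$ in the first factor commutes with convolution, because the intersection only takes place over the middle factor and $\pi$ is the identity on the support $Z\circ\B=\B$; Gysin restriction of the middle factor to the zero section then turns convolution in $\B\times\tM\times\{\pt\}$ into convolution in $\B\times\B\times\{\pt\}$. The step you flag as the main obstacle is harmless: $\tM\times\B$ contains the full first factor, so it meets $\Delta_{\tM}$ transversally in $\tM\times\tM$, there is no excess bundle, and the localization fallback is unnecessary.
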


As before, using the induction isomorphism in equivariant homology, we identify $$\HBM{G}{\B} \cong \HBM{G}{G/B} \cong \HBM{B}{\pt} \cong \HBM{T}{\pt} = S$$ $$H_G^\bullet(\pt) \cong \left( H_T^\bullet(\pt) \right)^W \cong S^W$$ Let $\alpha:\End_{H_G^\bullet(\pt)} \left( \HBM{G}{\B}\right) \xrightarrow{\cong} \End_{S^W}(S)$ be the resulting isomorphism. 

The next proposition follows from \cite{KK1}.

\begin{proposition}
\label{finite_H_0_thm}
The assignment
$$\left[ \overline{(\B\times\B)_w} \right] \mapsto D_w$$
extends to an $S$-linear algebra isomorphism $\theta : \HBM{G}{\B\times \B} \xrightarrow{\cong}\H$, such that the following diagram commutes:
    \[\begin{tikzcd}
	\HBM{G}{\B\times\B} && \End_{H_G^\bullet(\pt)} \left( \HBM{G}{\B}\right) \\
	\\
	\H && \End_{S^W}(S)
	\arrow["\rho"', from=1-1, to=1-3]
	\arrow["\theta", from=1-1, to=3-1]
	\arrow["\alpha", from=1-3, to=3-3]
	\arrow["\rho'",from=3-1, to=3-3]
\end{tikzcd}\]
    Here $\rho'$ is the action of $\H$ on $S$ described in Section \ref{section_algebra}. Moreover, all maps in the above diagram are isomorphisms.
\end{proposition}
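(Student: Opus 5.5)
The plan is to realize the nil-Hecke algebra through equivariant localization to $T$-fixed points, in the spirit of Kostant--Kumar, and to check three things in order: that $\rho$ is injective, that its image lands in $\End_{S^W}(S)$, and that the resulting map agrees with $\rho'$ on generators.

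First I will identify $\HBM{G}{\B\times\B}$ with $\HBM{T}{\B}$ via induction, using $\B\times\B\cong G\times^B \B$. The $T$-fixed points of $\B$ are the points $wB$, and the Bruhat cells $BwB/B$ give a paving by affine spaces. This shows that $\HBM{G}{\B\times\B}$ is a free $S$-module with basis $\{[\overline{(\B\times\B)_w}]\}_{w\in W}$, exactly as in the proof of Lemma \ref{finite_S_mod_basis}. The same identification shows that the $S$-module structure given by the first factor matches left multiplication by $S$ on $\H$. Since $\{D_w\}$ is an $S$-basis of $\H$ by Proposition \ref{tH_description}, the assignment defines an $S$-linear isomorphism $\theta$ of modules.

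Next I will show that $\alpha\circ\rho$ is injective and lands in the operators described by $Q[W]$. Localizing, $\HBM{T}{\B}\otimes_S Q\cong\bigoplus_{w} Q\,[wB]$. In these terms, convolution of a class with $[\B]$, viewed in $S$, is computed by the Atiyah--Bott formula: the fixed point $wB$ contributes $w(\cdot)$ divided by the Euler class $\prod_{\beta\in\Phi_+}w(\beta)$, up to sign. This expresses $\alpha\circ\rho$ of any class as an element of $Q[W]$ acting on $S$. The resulting map $\HBM{G}{\B\times\B}\otimes Q\to Q[W]$ is injective, because the fixed point classes are linearly independent and $Q[W]$ acts faithfully on $Q$.

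The key computation is then on generators. For a simple reflection $s$, the closure $\overline{(\B\times\B)_s}$ is the fibre product $\B\times_{G/P_s}\B$. Convolution with it is the pull-push $\pi_s^*\pi_{s*}$ through the $\mathbb{P}^1$-fibration $\B\to G/P_s$. Localizing on the two fixed points of a $\mathbb{P}^1$ fibre gives $f\mapsto \frac{f-s(f)}{\alpha_s}$ up to the sign convention, which is $D_s$. This identifies $\alpha\circ\rho$ on these classes with $\rho'(D_s)$, and on $S$ (via $\B_\Delta$) with multiplication.

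To get multiplicativity, I will use the fact that $\rho$ is an algebra map (convolution is associative) and that $\rho'$ is faithful. It then suffices to show that the classes $[\overline{(\B\times\B)_w}]$ are products of the simple ones. For a reduced word, the convolution of the corresponding Bott--Samelson correspondences is birational onto $\overline{(\B\times\B)_w}$, so $[\overline{(\B\times\B)_{s_1}}]\star\cdots\star[\overline{(\B\times\B)_{s_\ell}}]=[\overline{(\B\times\B)_w}]$. This requires a transversality and dimension check: the degree is 1 and the lower-dimensional strata do not contribute, for dimension reasons. This birationality step is the main obstacle. I will try to handle it through the leading-term structure, by comparing the fixed-point coefficient at $[w]$ with the $\frac{(-1)^{\ell(w)}}{\prod\beta}$ of Lemma \ref{correct_leading_term}.

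Once these pieces are in place, $\alpha\rho=\rho'\theta$ holds on a generating set, and therefore $\theta$ is an algebra map. The maps $\theta$ and $\alpha$ are isomorphisms. Finally, $\rho'$ is an isomorphism by $\H\cong\End_{S^W}(S)$ (Proposition 2.3 of \cite{GKV}), so $\rho$ is an isomorphism as well.
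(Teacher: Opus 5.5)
Your proposal is correct, but it does more than the paper, which gives no argument and simply attributes the statement to Kostant--Kumar \cite{KK1}. Their results identify the nil-Hecke ring with operators on the $T$-equivariant (co)homology of $G/B$, not with a convolution algebra on $\B\times\B$, so the paper leaves that translation implicit. You carry out the translation directly. The paving gives the $S$-basis, and localization at the points $(1B,wB)$ embeds $\alpha\circ\rho$ into $Q[W]$. The rank-one computation on $\B\times_{G/P_s}\B$ produces $D_s$. The factorization $[\overline{(\B\times\B)_{s_1}}]\star\cdots\star[\overline{(\B\times\B)_{s_\ell}}]=[\overline{(\B\times\B)_w}]$, together with faithfulness of $\rho'$, gives multiplicativity of $\theta$. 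Bijectivity of $\rho=\alpha^{-1}\rho'\theta$ then follows from $\H\cong\End_{S^W}(S)$. What your route buys is a self-contained proof that makes the conventions explicit. You must fix the identification $H^\bullet_G(\B)\cong S$ once and for all so that $[\overline{(\B\times\B)_s}]\mapsto D_s$ rather than $-D_s$; precomposing with $x\mapsto -x$ on $\t$ flips all these signs simultaneously. The step you call the main obstacle is standard, and your fallback works. The iterated fibre product $\overline{(\B\times\B)_{s_1}}\times_\B\cdots\times_\B\overline{(\B\times\B)_{s_\ell}}$ is smooth of dimension $\dim\B+\ell$, because each projection to $\B$ is a $\mathbb{P}^1$-bundle, so the convolution is the pushforward of its fundamental class. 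Its image is $\overline{(\B\times\B)_w}$, so for degree reasons the product equals $d\,[\overline{(\B\times\B)_w}]$ with $d\in\C$. To pin down $d$, localize at the smooth point $(1B,wB)$ and compare with the leading term $\frac{(-1)^{\ell(w)}}{\prod_{\beta\in\Phi_+\cap w\Phi_-}\beta}[w]$ of $D_w$ used in the proof of Lemma \ref{correct_leading_term}. This comparison of coefficients of $[w]$ forces $d=1$, which is equivalent to birationality of the Bott--Samelson map.
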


Let $\theta_0 \coloneqq \theta \circ\iota_0: \HBM{G}{Z}\rightarrow \H$.

\begin{lemma}
\label{finite_Image_in_HM}
    $\Im(\theta_0) \subset \H_M$.
\end{lemma}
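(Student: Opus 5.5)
By Proposition \ref{H_Sigma_is_normalizer}, $\H_M$ is the set of $f\in\H$ with $f.(S\Pi_M^{-1})\subset S\Pi_M^{-1}$. Since $\theta_0$ takes values in $\H$, it suffices to show that for every $z\in\HBM{G}{Z}$ the operator $\theta_0(z)$ preserves $S\Pi_M^{-1}$. Equivalently, $\Pi_M\theta_0(z)\Pi_M^{-1}$ must preserve $S$.

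The plan is to realize $S\Pi_M^{-1}$ geometrically as a second convolution module. The natural candidate is $\HBM{G}{\tM}$, on which $\HBM{G}{Z}$ acts by convolution via $Z\circ\tM=\tM$ inside $\tM\times\tM\times\{\pt\}$. By the induction isomorphism, $\HBM{G}{\tM}\cong\HBM{T}{M}=S.[M]$, a free rank one $S$-module.

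Next I compare the two modules. The zero section $\B\hookrightarrow\tM$ gives a pushforward $j_*:\HBM{G}{\B}\to\HBM{G}{\tM}$. Under the identifications with $S$, and after localization, $j_*$ should be multiplication by the Euler class of $M$, i.e. $[\B]\mapsto\Pi_M\cdot[\tM]$. I expect $j_*$ to intertwine the convolution actions of $\HBM{G}{Z}$, because convolution commutes with proper pushforward on the last factor. Granting this, after inverting, the action of $z$ on $S.[M]\cong S\Pi_M^{-1}\cdot\Pi_M$ is the conjugate $\Pi_M\rho_0(z)\Pi_M^{-1}$. More precisely, if $[\tM]$ corresponds to $\Pi_M^{-1}$ in $Q\Pi_M^{-1}$, then $j_*$ becomes the inclusion $S\hookrightarrow Q\Pi_M^{-1}$, $g\mapsto g\Pi_M\cdot\Pi_M^{-1}$, and the $Q[W]$-action defined in Section \ref{section_algebra} is compatible with this. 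Integrality of the convolution action on $\HBM{G}{\tM}$ then shows that $\theta_0(z)$ maps $S\Pi_M^{-1}$ into itself. Combined with Lemma \ref{finite_map_to_H0} and Proposition \ref{finite_H_0_thm}, which identify $\rho_0$ with $\rho'\circ\theta_0$, this gives $\theta_0(z)\in\H_M$.

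The main obstacle is making this compatibility rigorous. The action of $\H\subset Q[W]$ on $Q\Pi_M^{-1}$ is defined formally, so I must check that the localized action of $z$ on $\HBM{G}{\tM}\otimes Q$ matches the formula $f_w[w].(g\Pi_M^{-1})=f_w w(g)\frac{\Pi_M}{\Pi_{wM}}\Pi_M^{-1}$. I would do this by $T$-equivariant localization to fixed points $wB$: the fixed-point class of $\tM$ at $wB$ carries Euler factor $\Pi_{wM}$, and that of $\B$ carries none. This produces exactly the ratio $\Pi_M/\Pi_{wM}$ when transporting the $[w]$-component of the action.

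An alternative route uses the basis $[\overline{Z_w}]$ from Lemma \ref{finite_S_mod_basis}. One would compute the leading term of $\theta_0([\overline{Z_w}])$ by localization and verify conditions (2b) and (3) of Theorem \ref{H_Sigma_(1)(2)(3)}. I expect this to be messier, so I would only use it as a check. Notably, gluability is not needed for this inclusion; it should matter only for surjectivity.
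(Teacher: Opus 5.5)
Your proposal is correct and is essentially the paper's own argument. The paper also passes to the convolution module $\HBM{G}{\tM}=S.[\tM]$, uses Lemma 5.2.23 of \cite{CG} to see that the zero-section pushforward $\varepsilon_*$ intertwines the two convolution actions, identifies $\varepsilon_*$ with multiplication by $\Pi_M$ via the Thom isomorphism, and concludes with Proposition~\ref{H_Sigma_is_normalizer}, without gluability. The paper leaves implicit the compatibility you flag as the main obstacle, and it is cheaper than fixed-point localization: both actions are $S^W$-linear, so multiplying by the $W$-invariant element $\prod_{w\in W} w(\Pi_M)$ shows the action on $S.[\tM]$ is exactly $\Pi_M\theta_0(z)\Pi_M^{-1}$ acting on $S$.
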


\begin{proof}
    Combining Lemma \ref{finite_map_to_H0} and Proposition \ref{finite_H_0_thm}, we have $\rho' \circ \theta_0 = \alpha \circ \rho_0$. Using Proposition \ref{H_Sigma_is_normalizer}, it is enough to prove that for all $z\in \HBM{G}{Z}$, the map $(\alpha\circ \rho_0)(z)$ sends the fractional ideal $S \Pi_M^{-1}$ to itself.

    Recall the map $\rho_0$, given by convolution, using $Z\circ\B = \B$ in $\tM \times \tM \times \{\pt\}$. We also have $Z\circ \tM = \tM$, so we obtain $\rho_M : \HBM{G}{Z} \rightarrow\End_{H_G^\bullet(\pt)}\left(\HBM{G}{\tM}\right)$. Let $\varepsilon_* : \HBM{G}{\B} \rightarrow \HBM{G}{\tM}$ be the pushforward along the zero section embedding. By the Thom isomorphism, $\varepsilon_*$ is injective and its image is the principal ideal in $\HBM{G}{\tM} = S.[\tM]$ generated by $\Pi_M$. Here $\Pi_M \in S$ is the equivariant Euler class of $\tM \rightarrow \B$. Lemma 5.2.23 in \cite{CG} shows that the following diagram commutes:
    \[\begin{tikzcd}
	\HBM{G}{Z} \otimes \HBM{G}{\B} && \HBM{G}{\B} \\
	\\
	\HBM{G}{Z} \otimes \HBM{G}{\tM} && \HBM{G}{\tM}&& 
	\arrow["\star"', from=1-1, to=1-3]
	\arrow["\text{id} \otimes \varepsilon_*", from=1-1, to=3-1]
	\arrow["\varepsilon_*", from=1-3, to=3-3]
	\arrow["\star",from=3-1, to=3-3]
\end{tikzcd}\]

\noindent where the top horizontal map is induced by $\rho_0$ and the bottom one by $\rho_M$. This implies that for all $z\in \HBM{G}{Z}$, $(\alpha\circ \rho_0)(z)$ maps the fractional ideal $S \Pi_M^{-1}$ to itself.

\end{proof}

\begin{lemma}
\label{finite_leading_term_ZW}
    The action of $\theta_0$ on fundamental classes is given by:
    $$\theta_0\left(\left[\overline{Z_w}\right]\right) = \frac{\Pi_{wM}}{\Pi_{M\cap wM}} D_w + \sum_{y < w} b_{w,y} D_y$$
    for some $b_{w,y} \in S$.
    In particular, $\theta_0$ is injective.
\end{lemma}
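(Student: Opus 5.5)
We compute $\iota_0([\overline{Z_w}]) = i^*p_*[\overline{Z_w}]$ and expand it in the basis $\{[\overline{(\B\times\B)_y}]\}_{y}$ of $\HBM{G}{\B\times\B}$; applying $\theta$ then gives the expansion in the $D_y$.

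\textbf{Support.} The class $p_*[\overline{Z_w}]$ is supported on $p(\overline{Z_w})$, which lies over $\overline{(\B\times\B)_w} = \bigsqcup_{y\le w}(\B\times\B)_y$. After Gysin restriction along $i$, the class $\iota_0([\overline{Z_w}])$ is supported on $\overline{(\B\times\B)_w}$. By the cellular argument of Lemma \ref{finite_S_mod_basis}, applied to $\B\times\B$, it is an $S$-combination of $[\overline{(\B\times\B)_y}]$ with $y\le w$. The coefficients lie in $S$ because the Bruhat cells give a paving of this closed subvariety.

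\textbf{Leading coefficient.} To find the coefficient of $[\overline{(\B\times\B)_w}]$, restrict to the open set $U=(\pi^2)^{-1}$ of the open cell $(\B\times\B)_w$ in $\overline{(\B\times\B)_w}$. Restriction to an open subset is compatible with $p_*$ and $i^*$. Over the cell, (\ref{finite_Zw_as_bundle}) shows:
\begin{itemize}
\item $Z_w$ is the vector bundle $G\times^{B\cap wBw^{-1}}(M\cap wM)$;
\item $p$ embeds it into the restriction of $\B\times\tM$, which is the bundle with fibre $wM$ (the second factor's fibre over $wB$);
\item $i$ is the zero section of that bundle.
\end{itemize}
Using induction to $T$-equivariant homology at the point $(1B,wB)$, we have a linear subspace $M\cap wM\subset wM$ and a Gysin restriction to $0$. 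This equals the Euler class of the quotient $wM/(M\cap wM)$ times the fundamental class of the base, namely $\frac{\Pi_{wM}}{\Pi_{M\cap wM}}$. Hence
$$\iota_0([\overline{Z_w}]) = \tfrac{\Pi_{wM}}{\Pi_{M\cap wM}}[\overline{(\B\times\B)_w}] + \sum_{y<w} b_{w,y}[\overline{(\B\times\B)_y}].$$
The $S$-module structures must be matched carefully. On $\HBM{G}{Z}$ the action is through the first factor (Lemma \ref{finite_S_actions_agree}), and $\theta$ is $S$-linear for the corresponding left action. This is consistent because the coefficient comes out of left multiplication on the first factor. The orientation convention that the weights are those of $wM$, rather than $w^{-1}$-twisted, must be checked against the identification of the fibre at $(1B,wB)$. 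Applying $\theta$ from Proposition \ref{finite_H_0_thm} gives the formula.

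\textbf{Injectivity.} The $D_w$ form an $S$-basis of $\H$ (Proposition \ref{tH_description}), and $S$ is a domain with $\frac{\Pi_{wM}}{\Pi_{M\cap wM}}\neq0$. The matrix of $\theta_0$ from the basis $[\overline{Z_w}]$ of Lemma \ref{finite_S_mod_basis} to $\{D_y\}$ is therefore triangular with respect to a linear refinement of the Bruhat order, with nonzero diagonal entries. So $\theta_0$ is injective.

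The main obstacle is the leading-coefficient computation. Two points need care: that restricting to the open cell commutes with both $p_*$ and the Gysin map $i^*$ (a base-change statement for refined Gysin pullbacks), and that the resulting excess-intersection class is exactly the Euler class of $wM/(M\cap wM)$ with the correct sign and twist.
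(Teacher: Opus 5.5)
Your proposal is correct and follows essentially the same route as the paper: restrict $i^*p_*[\overline{Z_w}]$ to the open cell $(\B\times\B)_w\subset\overline{(\B\times\B)_w}$, where $Z_w$ sits as the subbundle with fibre $M\cap wM$ inside $G\times^{B\cap wBw^{-1}}wM$, so the Thom isomorphism gives the coefficient $\Pi_{wM}/\Pi_{M\cap wM}$. Injectivity then follows by triangularity against the $S$-basis $\{D_w\}$. You also spell out the support step that the paper leaves implicit, namely that the class lies in $\HBM{G}{\overline{(\B\times\B)_w}}$, so the correction terms are $S$-combinations of $[\overline{(\B\times\B)_y}]$ with $y<w$.
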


\begin{proof}
    Fix $w\in W$. Let $Y_w \coloneqq (\B\times\B)_w$. Consider the open-closed embeddings $$Y_w \xhookrightarrow{j} \overline{Y_w} \xhookleftarrow{k} \overline{Y_w} \setminus Y_w$$ As in the proof of Lemma \ref{finite_S_mod_basis}, the corresponding long exact sequence in homology breaks into short exact sequences:
    \begin{center}
    \begin{tikzcd}[column sep=large]
    0 \arrow[r] & \HBM{G}{\overline{Y_w} \setminus Y_w} \arrow[r, "k_*"] & \HBM{G}{\overline{Y_w}} \arrow[r, "j^*"] & \HBM{G}{Y_w} \arrow[r] & 0
    \end{tikzcd}    
    \end{center}
    Using Proposition \ref{finite_H_0_thm}, it is enough to prove that 
    $$j^* i^* p_*([\overline{Z_w}]) = \frac{\Pi_{wM}}{\Pi_{M\cap wM}} [Y_w]$$ 
    Let $\pi' : G\times^{B\cap w B w^{-1}} wM \rightarrow Y_w$ be the restriction of the morphism $\B \times \tM \rightarrow \B\times\B$ to $Y_w$. Let $i' : Y_w \rightarrow G\times^{B\cap w B w^{-1}} wM$ be the $0$-section of the vector bundle $\pi'$. Denote by $p' : Z_w \hookrightarrow G\times^{B\cap w B w^{-1}} (wM)$ the restriction of $p : Z \hookrightarrow \B\times\tM$. Then, it is enough to prove that:
    $$(i')^* (p')_*([Z_w]) = \frac{\Pi_{wM}}{\Pi_{M\cap wM}} [Y_w]$$
    Using diagram (\ref{finite_Zw_as_bundle}), this follows from the Thom isomorphism. Then, injectivity of $\theta_0$ follows from Proposition \ref{tH_description}.
\end{proof}

\begin{proof}[Proof of Theorem \ref{finite_H(Z)=H_M}]
    We show that $\theta_0$ is the desired isomorphism. For all $w\in W$, define $A_w = \theta_0\left(\left[\overline{Z_w}\right]\right)$. These are elements of $\H_M$ by Lemma \ref{finite_Image_in_HM}. The definition of gluability and Lemma \ref{finite_leading_term_ZW} show that the elements $A_w$ satisfy the hypothesis of Proposition \ref{H_Sigma_basis}. This gives $\Im(\theta_0) = \H_M$. Lemma \ref{finite_leading_term_ZW} shows that $\theta_0$ is injective, so it induces an isomorphism $\HBM{G}{Z} \cong \H_M$.

    Combining Lemma \ref{finite_map_to_H0} and Proposition \ref{finite_H_0_thm}, it follows that $\theta_0$ intertwines the action of $\HBM{G}{Z}$ on $\HBM{G}{\B}\cong S.[\B]$ by convolution and the action of $\H_M$ on $S$.
\end{proof}

\section{The affine case}
\label{section_affine}

Let $G$ be as before. Let $\O \coloneqq \C[[\varpi]]$ be the ring of formal power series in $\varpi$ and let $\K = \C((\varpi))$ be the field of formal Laurent series. Let $G(\K)$ and $G(\O)$ be the formal loop group and the formal arc group associated to $G$. Let $I$ be the Iwahori subgroup of $G(\K)$ associated to $B$.

We consider two versions of $G(\K)$, in parallel. Let $\bbG\coloneqq G(\K) \rtimes \Crot$ or $G(\K)\rtimes \Crot \times \Cdil$. Here $\Crot \coloneqq \C^*$ acts on $G(\K)$ by loop rotations, and $\Cdil \coloneqq \C^*$ will act on (unions of) vector bundles by dilations along the fibers. Let $\bbB \coloneqq I \rtimes \Crot$ or $I \rtimes \Crot \times \Cdil$. We use notation from Section \ref{section_algebra} for each choice of $\bbG$. 

Explicitly, let $S = \C[\t][\hbar]$ or $\C[\t][\hbar, \epsilon]$ and let $Q$ be the subring of $\operatorname{Frac}(S)$ consisting of functions with no poles at $\hbar$, where $\hbar, \epsilon$ correspond to the identity characters $\Crot \rightarrow \C^*$ and $\Cdil \rightarrow \C^*$. Let $\tW = X_*(T) \rtimes W$ be the extended affine Weyl group associated to $G$ and let $\H$ be the subring of $Q[\tW]$ generated by $S$ and by Demazure operators.

The affine flag variety is the ind-scheme $\Fl \coloneqq G(\K) / I \cong \bbG/\bbB$, which is equipped with an action of $\bbB$. The orbits of this action are described by the Bruhat decomposition:

$$\Fl = \bigsqcup_{w\in \tW} \Fl_w$$

\noindent where $\Fl_w\coloneqq \bbB \dot{w} \bbB/\bbB$, for $\dot{w} \in \bbG$ being any lift of $w\in \tW$. The closures of the $\bbB$-orbits are given by:

$$\overline{\Fl_w} = \Fl_{\leq w} \coloneqq \bigsqcup_{y\leq w} \Fl_y$$

\noindent where $\leq$ is the Bruhat order on $\tW$.

Let $V$ be a representation of $\mathbb{G}$ and let $M$ be a closed $\bbB$-stable subspace of $V$. When $\bbG$ includes $\Cdil$, we additionally assume that $\Cdil$ acts on $V$ by vector space dilations, via the identity character $\Cdil \rightarrow \C^*$. Assume that each $\tT$-weight space of $V$ is finite dimensional. We also assume that, for all $w \in \tW$, the vector space 
$$M^w \coloneqq M / (M \cap \bigcap_{\{g\in \bbG \mid g\bbB \in \Fl_{\leq w}\}} g^{-1}M)$$ is finite dimensional.

Following \cite{BFN}, we define the variety of triples associated to the pair $(V,M)$ as follows. Consider the trivial vector bundle $\pi:\bbG/\bbB \times V \rightarrow \bbG/\bbB$, the trivial vector bundle $\bbG/\bbB \times M \hookrightarrow \bbG/\bbB \times V$ and the closed embedding $\tM \coloneqq \bbG \times^\bbB M \hookrightarrow \bbG/\bbB \times V$ given by $(g\bbB,m)\mapsto (g\bbB,g.m)$. We define the variety of triples $\R \coloneqq \tM \cap (\bbG/\bbB \times M) \subset \bbG/\bbB \times V$. This is an ind-scheme, equipped with an action of $\bbB$. For $w\in \tW$, let $\R_w \coloneqq \pi^{-1}(\Fl_w)$ and $\R_{\leq w} \coloneqq \pi^{-1}(\Fl_{\leq w})$. We have

\begin{equation}
\label{affine_Rw_description}
\begin{tikzcd}
	\R_w & \bbB \times^{\bbB\cap w \bbB w^{-1}} (M \cap wM)\\
    \\
    \Fl_w & \bbB/(\bbB\cap w \bbB w^{-1})
	\arrow["\cong"', from=1-1, to=1-2]
	\arrow["\pi", from=1-1, to=3-1]
	\arrow[""', from=1-2, to=3-2]
	\arrow["\cong", from=3-1, to=3-2]
\end{tikzcd}  
\end{equation}

Following \cite[\textsection 2(ii)]{BFN}, we define the $\bbB$-equivariant Borel-Moore homology of $\R$. A similar construction is explained in \cite{GKO}. Let $y, w\in \tW$ such that $w \leq y$. Consider the vector bundle $\tM^y \coloneqq \bbG\times^\bbB M^y \rightarrow \Fl$. The quotient map $M \twoheadrightarrow M^y$ induces a quotient map $\tM \twoheadrightarrow \tM^y$. Let $\tM^y_{\leq w}$ be the preimage of $\Fl_{\leq w}$ under $\tM^y \rightarrow \Fl$. Let $\R^y_{\leq w}$ be the image of the composition $\R_{\leq w} \hookrightarrow \tM_{\leq w} \twoheadrightarrow \tM^y_{\leq w}$. Then, $\R^y_{\leq w} \rightarrow \Fl_{\leq w}$ is a union of vector bundles of finite ranks and for $y' \geq y$ in $\tW$, the kernel of the quotient map $\R^{y'}_{\leq w} \twoheadrightarrow \R^y_{\leq w}$ is a vector bundle of rank $\dim M^{y'} - \dim M^y$ over $\Fl_{\leq w}$. For $i\geq 0$, let $\bbB_i$ be the image of $\bbB$ in $G(\O/\varpi^i \O) \rtimes \Crot$. As $\R^y_{\leq w}$ is of finite type, the action of $\bbB$ on it factors through $\bbB_i$ for $i$ large enough. Define

\begin{equation}
\label{affine_def_H(R_leq_w)}
\HBM{\bbB}{\R_{\leq w}} \coloneqq \operatorname{H}^{-\bullet}_{\bbB_i} (\R^y_{\leq w}, \omega_{\R^y_{\leq w}})[-2 \dim M^y]    
\end{equation}

\noindent As in \cite{BFN}, this definition is independent of $i$ and of $y\geq w$. Explicitly, for $y_1\geq y_2\geq w$ and $i$ large enough, we identify $\operatorname{H}^{-\bullet}_{\bbB_i} (\R^{y_1}_{\leq w}, \omega_{\R^{y_1}_{\leq w}})[-2 \dim M^{y_1}] \cong \operatorname{H}^{-\bullet}_{\bbB_i} (\R^{y_2}_{\leq w}, \omega_{\R^{y_2}_{\leq w}})[-2 \dim M^{y_2}]$ using the pullback along the smooth map $\R^{y_1}_{\leq w} \rightarrow \R^{y_2}_{\leq w}$, whose fibers are vector spaces of complex dimension $\dim M^{y_1} - \dim M^{y_2}$.

For $w_1, w_2, y \in \tW$ with $w_1 \leq w_2 \leq y$ and for $i$ large enough, pushforward along the closed embedding $\R^y_{\leq w_1} \hookrightarrow \R^y_{\leq w_2}$ induces a map $\HBM{\bbB}{\R_{\leq w_1}} \rightarrow \HBM{\bbB}{\R_{\leq w_2}}$. This map is well-defined and independent of $i$ and $y$. Define:

\begin{equation}
\label{affine_def_H(R)}
\HBM{\bbB}{\R} \coloneqq \varinjlim \HBM{\bbB}{\R_{\leq w}}
\end{equation}

\noindent where the direct limit is taken with respect to the Bruhat order in $\tW$. Note that $\HBM{\bbB}{\R}$ is a module over $H_\bbB^\bullet(\pt) \cong H_{\tT}^\bullet(\pt) \cong S$.

As in \cite[\textsection 3(iii)]{BFN}, we equip $\HBM{\bbB}{\R}$ with a convolution product, using the following diagram, which is the analogue of \cite[(3.2)]{BFN}.

\begin{equation}
\label{affine_diagram_R_product}
\begin{tikzcd}[column sep=large, row sep=large]
\mathcal{R} \times \mathcal{R} \arrow[d, "i \times \text{id}_{\mathcal{R}}"{left}] & p^{-1}(\mathcal{R} \times \mathcal{R}) \arrow[l, "\tilde{p}"{above}] \arrow[r, "\tilde{q}"] \arrow[d] & q(p^{-1}(\mathcal{R} \times \mathcal{R})) \arrow[r, "\tilde{m}"] \arrow[d] & \mathcal{R} \arrow[d, "i"] \\
\tM \times \mathcal{R} & 
\bbG \times \mathcal{R} \arrow[l, "p"{above}] \arrow[r, "q"{above}] & 
\bbG \times^\bbB \mathcal{R} \arrow[r, "m"{above}] & 
\tM,
\end{tikzcd}
\end{equation}

Here $i:\R \hookrightarrow \tM$ is the closed embedding given by the definition of $\R$, the top row consists of closed ind-subvarieties of the bottom row, and the maps in the bottom row are given by:

$$([g_1, g_2 s], [g_2, s]) \mapsfrom (g_1, [g_2, s]) \mapsto [g_1, [g_2, s]] \mapsto [g_1 g_2, s].$$

For $z_1, z_2 \in \HBM{\bbB}{\R}$, define their product $z_1 \cdot z_2 \coloneqq \tilde{m}_* (\tilde{q}^*)^{-1} p^* (z_1\boxtimes z_2)$, where the maps are defined as in \cite[(3.7), Remark 3.8]{BFN}. Theorem 3.10 in \cite{BFN} holds with the same proof, making $\HBM{\bbB}{\R}$ into an algebra.

Following \cite[Theorem 4.9, \textsection4.4.3]{HKW}, we define an $\HBM{\bbB}{\R}$-module structure on $\HBM{\bbB}{\pt}$. We use the following analogue of diagram (9) in \cite{HKW}, with $N(\O)$ replaced by the point~$\{0\}$:

\begin{equation}
\label{affine_diagram_0_module}
\begin{tikzcd}[column sep=large, row sep=large]
\mathcal{R} \times \{0\} \arrow[d, "i \times \text{id}"{left}] & \bbG \times \{ 0 \} \arrow[l, "\tilde{p_0}"{above}] \arrow[r, "q_0"] \arrow[d, "\text{id}"] & \bbG \times^{\bbB} \{ 0 \} \arrow[r, "m_0"] \arrow[d, "\text{id}"] & \{ 0 \} \arrow[d, "\text{id}"] \\
\tM \times \{ 0 \} & 
\bbG \times \{0\} \arrow[l, "p_0"{above}] \arrow[r, "q_0"{above}] & 
\bbG \times^\bbB \{ 0\} \arrow[r, "m_0"{above}] & 
\{ 0\},
\end{tikzcd}
\end{equation}

Define the action of $z\in \HBM{\bbB}{\R}$ on $c \in \HBM{\bbB}{0}$, by $z\star c \coloneqq (m_0)_* (q_0^*)^{-1}(\tilde{p_0})^*(z \boxtimes c)$. Theorem 4.9 and Proposition 4.13 in \cite{HKW} hold in our case with the same proofs, showing that this defines an $\HBM{\bbB}{\R}$-module structure on $\HBM{\bbB}{0} = \HBM{\bbB}{\pt}$. This is a free $S$-module of rank 1, generated by the fundamental class $[\pt] \in \HBM{\bbB}{\pt}$, as $\text{H}^\bullet_{\bbB}(\pt) \cong S$.

For a representation $\mathcal{N}$ of $T\times \Crot$ or $T\times \Crot\times \Cdil$, let $\Sigma(\mathcal{N})$ be the collection of weights of $\mathcal{N}$, counted with multiplicity. As in Section \ref{section_finite}, denote $\Pi_{\mathcal{N}} \coloneqq \Pi_{\Sigma(\mathcal{N})}$, $\H_M \coloneqq \H_{\Sigma(M)}$ and  $\Pi_{wM}\coloneqq  \Pi_{\Sigma(wM)}= \Pi_{w(\Sigma(M))}$ for $w\in \tW$. Our assumption on $M$ implies that, for all $w\in \tW$, $M/(M\cap wM)$ and $wM / (M \cap wM)$ are finite dimensional, so we can consider $\frac{\Pi_{M}}{\Pi_{M\cap wM}}$ and $\frac{\Pi_{wM}}{\Pi_{M\cap wM}}$ as elements of $S$.

\begin{definition}
\label{affine_def_gluable}
    A pair $(V,M)$ is called gluable if $\frac{\Pi_{M}}{\Pi_{M\cap wM}}$ and $\frac{\Pi_{wM}}{\Pi_{M\cap wM}}$ are coprime in $S$, for all $w\in \tW$.
\end{definition}

As in Section \ref{section_finite}, gluability of a pair $(V,M)$ depends not only on $M$, but on the ambient space $V$, in which the intersection $M\cap wM$ is taken.

\begin{remark}
    The definition of gluability depends on whether $\bbG$ includes $\Cdil$. Given a pair $(V,M)$, where $V$ is a representation of $G(\K) \rtimes \Crot \times \Cdil$, gluability of $(V,M)$ with respect to $G(\K) \rtimes \Crot \times \Cdil$ is a weaker assumption than gluability with respect to $G(\K) \rtimes \Crot$. A similar phenomenon is described in \cite[Corollaries 2.9, 2.10]{GW} in the case of Coulomb branches.
\end{remark}

The main result of this section is:

\begin{theorem}
    \label{affine_H(R)=H_M}
     Assume that the pair $(V,M)$ is gluable. Then there exists an algebra isomorphism $\HBM{\bbB}{\R} \cong \H_M$. This isomorphism intertwines the action of $\HBM{\bbB}{\R}$ on $\HBM{\bbB}{\pt}$ and the action of $\H_M$ on $S$.
\end{theorem}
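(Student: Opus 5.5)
We will follow the strategy of the proof of Theorem \ref{finite_H(Z)=H_M}, replacing $Z \cong G\times^B Z'$ by the variety of triples $\R$ and working $\bbB$-equivariantly on $\Fl$. The plan has four steps.

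\emph{Step 1 ($S$-module basis).} Using the stratification $\R_{\leq w} = \bigsqcup_{y\leq w} \R_y$, together with the description (\ref{affine_Rw_description}) of each $\R_y$ as a vector bundle over the affine space $\Fl_y$, we show that $\HBM{\bbB}{\R_{\leq w}}$ is a free $S$-module. Its basis is given by the (renormalized) fundamental classes $[\overline{\R_y}]$ for $y\leq w$. As in Lemma \ref{finite_S_mod_basis}, the long exact sequences split because everything is concentrated in even degrees. One must check that this is compatible with the truncations $\R^y_{\leq w}$ and with the shift $[-2\dim M^y]$ in (\ref{affine_def_H(R_leq_w)}). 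Passing to the direct limit (\ref{affine_def_H(R)}), we get that $\{[\overline{\R_w}]\}_{w\in\tW}$ is an $S$-basis of $\HBM{\bbB}{\R}$.

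\emph{Step 2 (algebra map to $\H$).} We construct an algebra map $\theta_0:\HBM{\bbB}{\R}\to\H$ compatible with the actions on $\HBM{\bbB}{\pt}\cong S$. There are two natural routes.
\begin{itemize}
\item[(a)] Copy Lemma \ref{finite_map_to_H0}: map $\R$ to the case $M=0$, where the variety of triples is $\Fl$ and $\HBM{\bbB}{\Fl}\cong\H$ by the affine version of Proposition \ref{finite_H_0_thm} (Kostant--Kumar). The map is pushforward to $\Fl\times V$ followed by Gysin pullback to the zero section, applied on the truncations $\R^y_{\leq w}\hookrightarrow \Fl_{\leq w}\times M^y$.
\item[(b)] Use the faithful action on $\HBM{\bbB}{\pt}$ directly. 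After localizing to $\tT$-fixed points $\{\dot w\}\subset\Fl$, the action of any $z$ on $S$ is an element of $Q[\tW]$ preserving $S$, hence lies in $\H$ by Proposition \ref{tH_description}.
\end{itemize}
Route (b) gives multiplicativity for free, since the module is an algebra module by the analogue of \cite[Theorem 4.9]{HKW}. I would therefore define $\theta_0$ this way and then identify it with route (a) to compute leading terms.

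\emph{Step 3 (image lies in $\H_M$).} We show $\Im\theta_0\subset\H_M$. By Proposition \ref{H_Sigma_is_normalizer}, it suffices to show that each $\theta_0(z)$ preserves $S\Pi_M^{-1}$. As in Lemma \ref{finite_Image_in_HM}, we use a second module: the homology of $M$ itself, that is, the analogue of $\HBM{G}{\tM}$, defined via the truncations $M^y$ with the same renormalization. This is the module $\HBM{\bbB}{M}$ of \cite{HKW}, with $N(\O)$ replaced by $M$. Pushforward along $\{0\}\hookrightarrow M$ intertwines the two actions, and it sends $1$ to multiplication by the formal Euler class $\Pi_M$. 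After renormalization this is exactly the inclusion $S\cdot[\pt]\cong S\Pi_M^{-1}\cdot\Pi_M$ twisted as in the definition of $Q\Pi_\Sigma^{-1}$. This step needs care with the infinite product. The finiteness of $M^w$ makes every ratio $\Pi_M/\Pi_{wM}$ that actually appears a genuine element of $Q$.

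\emph{Step 4 (leading terms and conclusion).} We compute the leading term
$$\theta_0([\overline{\R_w}])=\frac{\Pi_{wM}}{\Pi_{M\cap wM}}D_w+\sum_{y<w}b_{w,y}D_y,\qquad b_{w,y}\in S.$$
This is done by restricting to the open stratum $\R_w$ and applying the Thom isomorphism for $M\cap wM\subset wM$, as in Lemma \ref{finite_leading_term_ZW}; only the finite-dimensional quotient $wM/(M\cap wM)$ contributes. Gluability identifies $\Pi_{wM}/\Pi_{M\cap wM}=\tPi_{wM,M}$. Then Proposition \ref{H_Sigma_basis} gives surjectivity onto $\H_M$, and injectivity follows from Proposition \ref{tH_description}, since the $D_w$ are $S$-independent.

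\emph{Main obstacle.} I expect the main difficulty to be the renormalized bookkeeping in Steps 3 and 4. One has to make the Gysin pullback and Thom isomorphism arguments precise for the ind-scheme $\R$, show they commute with the truncation and limit maps, and check that the renormalization shift $[-2\dim M^y]$ produces exactly the ratio $\Pi_{wM}/\Pi_{M\cap wM}$ rather than, say, its inverse or a version twisted by $\Pi_{M^y}$. My first approach is to fix $y\gg w$ and do all computations on the finite-type space $\R^y_{\leq w}\subset \Fl_{\leq w}\times M^y$. There the finite-dimensional arguments of Section \ref{section_finite} apply verbatim, and the factors of $\Pi_{M^y}$ cancel.
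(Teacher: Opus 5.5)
Your plan has the same architecture as the paper's proof: a basis of fundamental classes, a map $\theta_0$ to $\H$ compatible with the action on $\HBM{\bbB}{\pt}$, the $\varepsilon_*$ square giving $\Im(\theta_0)\subset\H_M$, and leading terms plus Proposition \ref{H_Sigma_basis}. However, route (a) in Step 2 uses the wrong ambient bundle, so the identification of (a) with (b) that you rely on is false.

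The map compatible with the action on $\HBM{\bbB}{\pt}$ is $\iota_0=(i_0)^*\circ(p_0)_*$. Here $p_0:\R\hookrightarrow\tM=\bbG\times^{\bbB}M$, and $i_0:\Fl\hookrightarrow\tM$ is the zero section of $\tM$; this is the affine analogue of the intermediary $\B\times\tM$ in Lemma \ref{finite_map_to_H0}. In the diagram defining the module $\HBM{\bbB}{\pt}$, the map $\bbG\times\{0\}\to\tM$, $g\mapsto[g,0]$, factors through $i_0$. So the restriction with supports is $i_0^*$ followed by smooth pullback along $\bbG\to\Fl$, which gives $\rho_0=\rho\circ\iota_0$.

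Your route (a) instead pushes forward into the trivial bundle $\Fl\times M$ and restricts to its zero section. That is a different map $\iota_M$, compatible with the action on $\HBM{\bbB}{M}$ through the Thom isomorphism. Combined with your $\varepsilon_*$ argument, it equals $\Pi_M\,\iota_0(-)\,\Pi_M^{-1}$. You can see the difference on the open stratum: the fibre $M\cap wM$ of $\R_w$ over $\dot w\bbB$ sits inside $M$ in $\Fl\times M$, but inside $wM$ in $\tM$. So (a) as written has leading term $\frac{\Pi_M}{\Pi_{M\cap wM}}D_w=\tPi_{M,wM}D_w$. By Lemma \ref{correct_leading_term}, such elements are in general not even in $\H_M$; they are the leading terms of a basis of $\Pi_M\H_M\Pi_M^{-1}$.

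Your Step 4 computation (Thom isomorphism for $M\cap wM\subset wM$) is really the computation inside $\tM$. Its formula is therefore the correct one for $\theta_0$ defined through $\HBM{\bbB}{\pt}$, but it is not a computation of the map you defined in (a), so the two halves of your argument do not match.

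Relatedly, with the given definitions $\R^y_{\leq w}$ lives in $\tM^y_{\leq w}$, not in $\Fl_{\leq w}\times M^y$. The subspace divided out in the fibre over $g\bbB$ is $gK_y$, where $K_y=M\cap\bigcap_{h\bbB\in\Fl_{\leq y}}h^{-1}M$, and this is not constant in the trivialization $\Fl\times M$. To fix the argument, replace $\Fl_{\leq w}\times M^y$ in (a) by $\tM^y_{\leq w}$ and its zero section, and prove $\rho_0=\rho\circ\iota_0$ as above. The rest of your argument then goes through as in the paper.
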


If $G$ has a simply connected derived group, the analogous theorem holds when equivariant Borel-Moore homology is replaced with equivariant $K$-theory with coefficients in $\C$.

\begin{remark}
\label{affine_DAHA_remark}
Taking $V = \g(\K)$ and $M = \text{Lie}(I)$, we recover the poles and residues construction of the double affine Hecke algebra (DAHA) $\ddH$ and its trigonometric degeneration in \cite[Theorem 6.3.1]{GKV}, see also \cite{VV} and \cite[Theorem 6.1]{GKO}.

Note that $\mathcal{Q}[\tW]$ acts on $\mathcal{Q}$ by $\mathcal{Q}$-linearly extending the usual action of $\tW$ on $\mathcal{Q}$. An action of $\mathcal{Q}[\tW]$ on the free rank $1$ module generated by a symbol $\mathbf{m}$, denoted by $\mathcal{Q}.\mathbf{m}$, is defined in \cite[(7.7)]{BEG} by the formula:

$$[s_i] : g . \mathbf{m} \mapsto s_i(g) \cdot \frac{{q}^{-1} \cdot e^{\alpha_i/2} - {q} \cdot e^{-\alpha_i/2}}{{q}^{-1} \cdot e^{-\alpha_i/2} - {q} \cdot e^{\alpha_i/2}} . \mathbf{m}$$

The map $\mathbf{m}\mapsto e^{-\hat{\rho}} \varPi_M^{-1}$ induces an isomorphism of $\mathcal{Q}[\tW]$-modules $\mathcal{Q}.\mathbf{m} \cong \mathcal{Q}.\varPi_M^{-1}$. Under this isomorphism, Proposition \ref{H_Sigma_is_normalizer} says that $\ddH$ is the subalgebra of operators in $\mathcal{Q}[\tW]$ which preserve both $\S.\mathbf{m} \subset \mathcal{Q}.\mathbf{m}$ and $\S\subset \mathcal{Q}$.    
\end{remark}

\subsection*{Spherical case}

Let $\P = G(\O) \rtimes \Crot$ or $G(\O) \rtimes \Crot \times \Cdil$, so $\bbG / \P \cong \text{Gr}_{G}$, the affine Grassmannian of $G$. Let $V$ be a representation of $\bbG$ and let $M$ be a $\P$-stable subspace of it, satisfying the same conditions as before. Define the variety of triples $\R_\P \coloneqq (\bbG\times^\P M) \cap (\bbG/\P \times M)$ and its $\P$-equivariant Borel-Moore homology $\HBM{\P}{\R_\P}$ as before. Using a diagram similar to (\ref{affine_diagram_R_product}), we equip it with an algebra structure. 

Denote by $\Hsph \coloneqq e \H e$ and $\Hsph_M \coloneqq \Hsph  \cap \Pi_M^{-1} \Hsph \Pi_M \subset eQ[\tW]e$, where
$$e \coloneqq \frac{1}{|W|}\sum_{w\in W} [w] \in Q[\tW]$$
Gluability for the pair $(V,M)$ is equivalent to the condition that $\frac{\Pi_{M}}{\Pi_{M\cap e^\lambda M}}$ and $\frac{\Pi_{e^\lambda M}}{\Pi_{M\cap e^\lambda M}}$ are coprime in $S$, for all $e^\lambda \in X_*(T) \subset \tW$. 

\begin{corollary}
\label{affine_spherical_H(R)=H_M}
    If the pair $(V,M)$ is gluable, then there exists an isomorphism of algebras $\HBM{\P}{\R_\P} \cong \Hsph_M$.
\end{corollary}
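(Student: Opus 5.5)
The plan is to deduce the spherical statement from the Iwahori one, Theorem \ref{affine_H(R)=H_M}, by realizing $\HBM{\P}{\R_\P}$ as $e\,\HBM{\bbB}{\R}\,e$ and then applying Lemma \ref{H_Sigma_sph_description}. First I would check that, since $M$ is $\P$-stable, the collection $\Sigma(M)$ is $W$-invariant (as $W\subset \P$ permutes the weight spaces of $M$). This is the hypothesis needed so that $\Pi_M^{-1} e \Pi_M = e$, and Lemma \ref{H_Sigma_sph_description} with $J$ the finite simple roots then gives $e\H_M e = \Hsph \cap \Pi_M^{-1}\Hsph\Pi_M = \Hsph_M$. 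I would also note that gluability for all $w\in\tW$ is exactly the hypothesis of Theorem \ref{affine_H(R)=H_M}, so we have $\HBM{\bbB}{\R}\cong \H_M$, compatibly with the actions on $\HBM{\bbB}{\pt}\cong S$.

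The geometric step is to identify $\HBM{\P}{\R_\P}$ with $e\,\HBM{\bbB}{\R}\,e$. Consider the projection $\bbG/\bbB \to \bbG/\P$, whose fibers are $\P/\bbB \cong G/B$, together with the intermediate variety $\R_{\P,\bbB} := (\bbG\times^\bbB M)\cap(\bbG/\bbB\times M)$. Here $\bbG\times^\bbB M$ is the pullback of $\bbG\times^\P M$ because $M$ is $\P$-stable. Restricting equivariance from $\P$ to $\bbB$ and pulling back along the smooth proper map $\R_{\P,\bbB}\to\R_\P$ with fiber $G/B$, the standard argument (as in \cite{BFN} for the spherical versus Iwahori Coulomb branch) identifies $\HBM{\P}{\R_\P}$ with the $W$-invariant, or $e$-projected, part. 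Concretely, I would define algebra maps using the classes of $\P\times^\bbB$-type correspondences. Convolution with the fundamental class of the closure of the $\R_{\leq w_0}$ stratum, where $w_0$ is the longest element of $W$, corresponds under $\theta$ to $D_{w_0}$ times a unit multiple of $\Delta$-type factors. A cleaner route is to use the faithful actions instead.

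Indeed, I would avoid computing the correspondence directly and argue via the polynomial representations. By the Iwahori theorem, $\HBM{\bbB}{\R}$ acts faithfully on $S$. Analogously, $\HBM{\P}{\R_\P}$ acts on $\HBM{\P}{\pt}\cong S^W = eS$, and by the same localization argument (fixed points are $e^\lambda$, and the stratification of $\R_\P$ by $G(\O)$-orbits on $\mathrm{Gr}_G$ paves it) this action is faithful and lands in $eQ[\tW]e$. The image preserves $eS$, and by the Thom-isomorphism argument of Lemma \ref{finite_Image_in_HM} (pushforward along the zero section of $\bbG\times^\P M$) it also preserves $eS\Pi_M^{-1}$. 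Lemma \ref{H_Sigma_sph_description} then places the image inside $\Hsph_M$.

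The main obstacle is surjectivity. My plan is to use pullback along $\R_{\P,\bbB}\to\R_\P$ followed by $e(-)e$, which compares $\HBM{\P}{\R_\P}$ with $e\HBM{\bbB}{\R}e\cong e\H_M e$. Alternatively, I would show directly that $x\mapsto exe$ is surjective from $\H_M$ onto $e\H_M e$, by lifting: given $\tilde f\in e\H_M e$, the element $\tilde f$ itself lies in $\H_M$ and equals $e\tilde f e$. Surjectivity of $\HBM{\P}{\R_\P}\to e\HBM{\bbB}{\R}e$ then reduces to the fact that averaging over the $G/B$-fibers realizes the idempotent $e$ geometrically, namely that the class $\frac{1}{|W|}\Delta$-normalized $[\P/\bbB]$ acts by $e$. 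This is the step I expect to require the most care.
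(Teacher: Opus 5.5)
Your first paragraph is the paper's proof. The paper obtains the algebra isomorphism $\HBM{\P}{\R_\P}\cong e\,\HBM{\bbB}{\R}\,e$ by following the proof of Corollary 6.3 in \cite{GKO}, and then concludes with Theorem \ref{affine_H(R)=H_M} and Lemma \ref{H_Sigma_sph_description}, whose $W$-invariance hypothesis on $\Sigma(M)$ you correctly verify. The faithful-action detour and the surjectivity worries are unnecessary once that geometric identification is taken as the cited input; your remark that $e$ is geometric (indeed $\theta_0([\overline{\R_{w_0}}])=D_{w_0}$ since $\R_{\leq w_0}=\Fl_{\leq w_0}\times M$, and $D_{w_0}\Delta=\sum_{w\in W}[w]=|W|e$) is one ingredient of that standard argument.
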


\begin{proof}
    Following the proof of Corollary 6.3 in \cite{GKO}, we obtain an isomorphism of algebras $\HBM{\P}{\R_\P} \cong e \HBM{\bbB}{\R} e$. Using Theorem \ref{affine_H(R)=H_M} and Lemma \ref{H_Sigma_sph_description}, we obtain $\HBM{\P}{\R_\P} \cong \Hsph_M$.
\end{proof}

In particular, given a finite dimensional $G$-representation $N$, one can take $V = N(\K)$ and $M=N(\O)$. Then $\HBM{\P}{\R_\P}$ is the quantized Coulomb branch corresponding to $(G,N)$, and Corollary \ref{affine_spherical_H(R)=H_M} recovers a quantized version of Theorem 2.8 in \cite{GW}. A version of this result was originally proved in \cite[Theorem 1]{Tel}. Also, this corollary provides another description of the algebra described in \cite[Corollary 6.3]{GKO}.

\subsection*{Flavor symmetry}

It is possible to extend our results in the case of flavor symmetry, as follows. Let $G\subset \widetilde{G}$ be finite dimensional reductive groups, such that $G$ is normal in $\widetilde{G}$. Choose maximal tori $T \subset \widetilde{T}$. Let $\widetilde{\tT}\coloneqq \widetilde{T}\times \Crot$, $\widetilde{\th} \coloneqq \text{Lie}(\widetilde{\tT})$, $S=\C[\widetilde{\th}]$, $Q$ the ring of rational functions on $\widetilde{
\th}$ with no poles along imaginary root hyperplanes and $\tW$ be the extended affine Weyl group of $G$. As before, we define the (extended) nil-Hecke algebra $\H$ as the subalgebra of $Q[\tW]$ generated by $S$ and the Demazure operators corresponding to elements of $\tW$. For a collection $\Sigma \subset X^*(\widetilde{\tT})$ satisfying assumption (\ref{finiteness_assumption}), we define $\Pi_\Sigma$ and $\H_\Sigma \subset Q[\tW]$ as in section \ref{section_algebra}. The results of section \ref{section_algebra} hold with the same proofs.

Let $\bbB\subset \widetilde{\bbB}$ be Iwahori subgroups of $G(\O)\rtimes \Crot \subset \widetilde{G}(\O) \rtimes \Crot$. Assume that the action of $G(\K)$ on $V$ extends to an action of $\widetilde{G}(\K)$, and that $M$ is stable under the action of $\widetilde{\bbB}$. Define the variety of triples $\R$ as before. As in \cite[3(viii)]{BFN}, the action of $\bbB$ on $\R$ extends to an action of $\widetilde{\bbB}$. We define $\HBM{\widetilde{\bbB}}{\R}$ and equip it with an algebra structure using the same constructions as for $\HBM{{\bbB}}{\R}$. In this context, Definition \ref{affine_def_gluable} is the same, but $S=\C[\widetilde{\bbh}]$, and $\Pi_M$ is a (possibly infinite) product of characters of $\widetilde{\tT}$. Then, imitating the proof of Theorem \ref{affine_H(R)=H_M}, it follows that if the pair $(V,M)$ is gluable, then there exists an isomorphism $\HBM{\widetilde{\bbB}}{\R} \cong \H_M$, intertwining the two actions on $S$.

Using Theorem 4.2 in \cite{BEF}, it is likely that Theorem \ref{affine_H(R)=H_M}, and thus the results in Section \ref{section_algebra}, apply to cyclotomic double affine Hecke algebras, recovering Theorem 2.10 and Remark 3.12 in \cite{BEF}.

\subsection*{Proof of Theorem \ref{affine_H(R)=H_M}}

Our strategy of proof of Theorem \ref{affine_H(R)=H_M} is similar to that of Theorem \ref{finite_H(Z)=H_M}. As $\HBM{\bbB}{\R}$ is a direct limit of $\bbB$-equivariant Borel-Moore homology groups, it comes equipped with an action of $\text{H}^\bullet_\bbB(\pt) \cong S$. 

Define $\HBM{\bbB}{M}$ to be the free module of rank $1$ generated by the fundamental class $[M]$. By this, we mean that for all $y\in \tW$, the $S$-module $\HBM{\bbB}{M^y}$ is a free module of rank $1$ generated by the fundamental class $[M^y]$, and for $y_1\geq y_2$ in $\tW$, the pullback along the smooth map $M^{y_1} \rightarrow M^{y_2}$ maps $[M^{y_2}]$ to $[M^{y_1}]$.

Let $w\leq y$ in $\tW$. Using pushforward along the closed embedding $\{1\bbB\} \times M^y \hookrightarrow \R^y$ and the algebra structure on $\HBM{\bbB}{\R}$, we obtain an action of $\HBM{\bbB}{M^y}$ on $\HBM{\bbB}{\R^y_{\leq w}}$. This action is independent of the choice of $y$, and commutes with the maps $\HBM{\bbB}{\R^y_{\leq w_1}} \rightarrow \HBM{\bbB}{\R^y_{\leq w_2}}$. Thus, we obtain an action of $\HBM{\bbB}{M}$ on $\HBM{\bbB}{\R}$.

\begin{lemma}
    The action of $S\cong \text{H}^\bullet_{\bbB}(\pt)$ on $\HBM{\bbB}{\R}$ and the action of $S \cong S.[M] = \HBM{\bbB}{M}$ on $\HBM{\bbB}{\R}$ agree.
\end{lemma}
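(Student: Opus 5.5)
This is the affine analogue of Lemma \ref{finite_S_actions_agree}, and I will argue the same way, one finite-dimensional approximation at a time. Fix $w\le y$ in $\tW$ and $i$ large enough. By definition (\ref{affine_def_H(R_leq_w)}), both actions can be computed on $\operatorname{H}^{-\bullet}_{\bbB_i}(\R^y_{\leq w},\omega)$. The transition maps of the direct system are smooth pullbacks (when $y$ changes) and proper pushforwards along closed embeddings (when $w$ changes). Both are $\operatorname{H}^\bullet_{\bbB_i}(\pt)$-linear, and the $\HBM{\bbB}{M}$-action is already known to commute with them. So it suffices to show, for each finite stage, that the action of $a\cdot[M^y]$ ($a\in S$) on $b\in \HBM{\bbB}{\R^y_{\le w}}$ equals $a\cap b$. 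Since the action of $\HBM{\bbB}{M^y}$ is $S$-linear in the first argument, this reduces to showing that $[M^y]$ acts as the identity.

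First step: unwind the convolution of diagram (\ref{affine_diagram_R_product}) when the first factor is supported on $\{1\bbB\}\times M^y\subset \R^y_{\le 1}$. Over the point $1\bbB$, the preimage $p^{-1}$ reduces to $\bbB\times \R$. The quotient by $q$ gives $\bbB\times^\bbB\R\cong\R$, and $m$ becomes the identity. So for $z_1=\alpha\cap[\{1\bbB\}\times M^y]$ the product is $z_1\cdot z_2 = p_1^*(\alpha)\cap z_2$, where $p_1:\R\to M$ (at the finite level, $\R^y_{\le w}\to M^y$) is the projection recording the second coordinate of $\tM\times\R$. This is the analogue of the base change and projection formula step in Lemma \ref{finite_S_actions_agree}. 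Concretely, I will apply base change along $\{1\}\times M\hookrightarrow \tM$ and then the projection formula for the smooth map $q$, where the class $[\{1\bbB\}\times M]$ pulls back to the fundamental class of $\bbB\times\R$ twisted only by the cohomology class $\alpha$.

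Second step: conclude as in the finite case. The space $M^y$ is a finite-dimensional vector space, hence equivariantly contractible, so $\operatorname{H}^\bullet_{\bbB_i}(M^y)\cong \operatorname{H}^\bullet_{\bbB_i}(\pt)\cong S$ via $q^*$. Writing $a\cdot[M^y]=q^*(a)\cap[M^y]$, we get $p_1^*q^*(a)\cap b$, which is the pullback of $a$ to $\R^y_{\le w}$ capped with $b$. That is exactly the $\operatorname{H}^\bullet_\bbB(\pt)$-module structure. The degree shifts $[-2\dim M^y]$ match on both sides, because both normalizations use the same $y$.

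The main obstacle is the first step. It must be made rigorous at the level of the finite-dimensional approximations used to define $(\tilde q^*)^{-1}$ and $\tilde p^*$ in \cite[(3.7), Remark 3.8]{BFN}. In particular I need to check that the Gysin-type pullback $p^*$ applied to a class supported on the fiber over $1\bbB$ gives the expected class on $\bbB\times\R$ with no Euler class correction. This holds because at $1\bbB$ the fibers of $\tM$ and $\R$ both equal $M$, so the relevant intersection is transversal, with excess bundle $M/M=0$. I will verify this on $\R^y_{\le w}$ with the group $\bbB_i$, where everything is of finite type.
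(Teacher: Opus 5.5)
Your proposal is correct. The reduction at the end of your first paragraph is in fact the paper's entire proof. The paper only observes that the convolution product is $S$-linear in the first variable (as in \cite[Theorem 3.10]{BFN}), so that $(a[M])\cdot z=a\,([M]\cdot z)$. It leaves implicit that the image $[\R_1]$ of $[M]$ is the unit.

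What you add is a hands-on transplant of the proof of Lemma \ref{finite_S_actions_agree}. You compute the left action of $\alpha\cap[\R_1]$ directly, through $p^{-1}(\R_1\times\R)=\bbB\times\R$ and $\bbB\times^{\bbB}\R\cong\R$. For $\alpha=1$ this supplies the unit property the paper takes for granted. For general $\alpha$ it makes the linearity reduction superfluous. The paper's route is shorter and delegates all finite-approximation bookkeeping to \cite{BFN}. Yours is self-contained, but it must carry out that bookkeeping itself.

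When you do that bookkeeping, two remarks apply.

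First, the ``excess bundle $M/M$'' remark is only a heuristic for why there is no Euler class correction. The clean reason is this: $\R_1\times\R$ is the preimage of $\{1\bbB\}\times\R$ under $\pi\times\mathrm{id}\colon\tM\times\R\to\Fl\times\R$, where $\pi\colon\tM\to\Fl$ is the bundle projection. The composite $(\pi\times\mathrm{id})\circ p\colon\bbG\times\R\to\Fl\times\R$, $(g,r)\mapsto(g\bbB,r)$, is a $\bbB$-torsor, hence smooth. So $p^*$ of a class pulled back from the point $1\bbB$ is an ordinary smooth pullback.

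Second, the $M$-coordinate map $\R\to M$, $[g_2,s]\mapsto g_2s$, does not in general descend to a map $\R^y_{\le w}\to M^y$ at the same truncation level $y$. This is harmless, because you only ever use $\alpha=q^*(a)$ pulled back from a point. In that case your first step is literally the linearity of $p^*$, $(\tilde q^*)^{-1}$ and $\tilde m_*$ over the equivariant parameters of the first copy of $\bbB$. That is exactly the fact the paper cites.
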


\begin{proof}
    The lemma is a consequence of the fact that multiplication in $\HBM{\bbB}{\R}$ is $S$-linear in the first variable. This follows as in \cite[Theorem 3.10]{BFN}.
\end{proof}

Thanks to this lemma, we view $\HBM{\bbB}{\R}$ as an $S$-module. 

Let $y, w\in \tW$ such that $y\geq w$. By diagram (\ref{affine_Rw_description}), $\R_w$ is a vector bundle over the $\bbB$-orbit $\Fl_w \subset\Fl_{\leq w}$. Consider its closure  $\overline{\R_w}$ in $\R_{\leq w}$. Similarly, we define $\overline{\R^y_w} \subset \R^y_{\leq w}$. For $y_1 \geq y_2 \geq w$, under the pullback along the smooth map $\R^{y_1}_{\leq w} \rightarrow \R^{y_2}_{\leq w}$, the fundamental class $[\overline{\R^{y_2}_w}]$ is mapped to $[\overline{\R^{y_1}_w}]$. Thus, using definitions (\ref{affine_def_H(R_leq_w)}) and (\ref{affine_def_H(R)}), we define $[\overline{\R_w}] \in \HBM{\bbB}{\R}$ independently of $y$. Arguing as in the proof of Lemma \ref{finite_S_mod_basis}, the fundamental classes $\left\{[\overline{\R_w}]\right\}_{w\in \tW}$ form a basis of the $S$-module $\HBM{\bbB}{\R}$.

Let $\rho_0 : \HBM{\bbB}{\R} \rightarrow \End(\HBM{\bbB}{\pt})$ be the representation described before using diagram (\ref{affine_diagram_0_module}). Using the same convolution
diagram for $V=M=0$, we obtain a representation $\rho : \HBM{\bbB}{\Fl} \rightarrow \End(\HBM{\bbB}{\pt})$.

Let $p_0:\R \hookrightarrow \tM$ be the closed embedding which was used to define $\R$ and let $i_0 : \Fl \hookrightarrow \tM$ be the $0$-section embedding. Let $\iota_0\coloneqq (i_0)^* \circ (p_0)_* : \HBM{\bbB}{\R} \rightarrow \HBM{\bbB}{\Fl}$. This is the analogue of the map $z^*$ in \cite[Lemma 5.11]{BFN}. Adapting the proofs in \cite[Proposition 4.15, \textsection 4.4.3]{HKW}, we obtain:

\begin{lemma}
\label{affine_0_map_to_H}
    The following diagram commutes
    \[\begin{tikzcd}
	\HBM{\bbB}{\R} \\
	&&  \End\left(\HBM{\bbB}{\pt}\right) \\
	\HBM{\bbB}{\Fl}
	\arrow["\rho_0", from=1-1, to=2-3]
	\arrow["\iota_0"', from=1-1, to=3-1]
	\arrow["\rho"', from=3-1, to=2-3]
\end{tikzcd}\]
\end{lemma}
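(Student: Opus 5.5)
The plan is to reduce the commutativity of this diagram to a statement at a finite level and then adapt the argument of \cite[Proposition 4.15]{HKW}, which is essentially the one in \cite[Lemma 5.4.27]{CG} used for Lemma \ref{finite_map_to_H0}. Both $\rho_0$ and $\rho\circ\iota_0$ are compatible with the pushforwards $\HBM{\bbB}{\R_{\leq w_1}}\to\HBM{\bbB}{\R_{\leq w_2}}$ defining the direct limit (\ref{affine_def_H(R)}). It therefore suffices to prove $\rho_0(z)=\rho(\iota_0(z))$ for $z\in\HBM{\bbB}{\R_{\leq w}}$ with $w$ fixed. We then fix $y\geq w$ and $i$ large, and work with the finite-type varieties $\R^y_{\leq w}\subset \tM^y_{\leq w}$ over $\Fl_{\leq w}$, with $\bbB_i$-equivariant homology. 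Using the pullback identifications, we check that $p_0$, $i_0$ and the convolution diagram (\ref{affine_diagram_0_module}) all descend to this level. The zero section of $\tM^y_{\leq w}$ is the same as that of $\tM$ after the normalizing shift $[-2\dim M^y]$.

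Next we unwind both sides. In (\ref{affine_diagram_0_module}), the action $z\star c$ is computed by restricting $z\boxtimes c$ along $\tilde p_0$, that is, along $\bbG\times\{0\}\to\R\times\{0\}$, $g\mapsto ([g,0],0)$. This map factors through the zero section $\Fl\hookrightarrow\tM$ followed by $i\times\mathrm{id}$. So the refined pullback $\tilde p_0^*$, taken with respect to $p_0:\bbG\times\{0\}\to\tM\times\{0\}$, is the composite of $(p_0)_*$ (pushforward into $\tM$) with the Gysin restriction to the zero section $i_0^*$. This is the base change compatibility of refined Gysin maps: pushing forward along the closed embedding $\R\hookrightarrow\tM$ and then pulling back along the regular embedding $\Fl\hookrightarrow\tM$ equals the refined pullback along the fiber square with corner $\R\cap\Fl$. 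In \cite{CG} this is the key step, the identity $i^*p_*=$ refined restriction. After this replacement, the remaining maps $(q_0^*)^{-1}$ and $(m_0)_*$ are literally the ones used to define $\rho$ with $V=M=0$. The two composites therefore coincide.

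The main obstacle is the first step: making the refined pullback in (\ref{affine_diagram_0_module}) rigorous for ind-schemes and infinite-dimensional $M$. We must check that the refined Gysin map along $p_0$, whose normal bundle is $\tM$ itself, that is, the vector bundle $\bbG\times^\bbB M$ pulled back, agrees at each finite level $y$ with the Gysin map for the finite-rank bundle $\tM^y$. We must also check that the degree shifts $[-2\dim M^y]$ match on both sides. I would handle this exactly as in \cite[\textsection 4.4.3]{HKW}. The pullback along the smooth quotient $\tM^{y_1}\to\tM^{y_2}$ commutes with zero-section Gysin maps up to the shift by the relative rank. This makes the finite-level identities compatible as $y$ grows, and the excess intersection formula then gives no extra Euler class, since $\R\to\tM$ and $\Fl\to\tM$ meet transversally in the relevant normal direction.
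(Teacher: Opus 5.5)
Your proposal is correct and follows essentially the same route as the paper, which only says it adapts \cite[Proposition 4.15, \textsection 4.4.3]{HKW} (the affine analogue of \cite[Lemma 5.4.27]{CG}); that argument is your factorization of the pullback in (\ref{affine_diagram_0_module}) through the zero section $\Fl\hookrightarrow\tM$, combined with $i_0^*\circ(p_0)_*=i_0^!$ (refined Gysin maps commute with proper pushforward), carried out at a finite level $\R^y_{\leq w}$. One correction: your closing claim that $\R$ and $\Fl$ meet transversally is false, since $\Fl\subset\R$ and the factor $\Pi_{wM}/\Pi_{M\cap wM}$ in Lemma \ref{affine_leading_term_RW} is exactly an excess Euler class, but it is also unnecessary: $i^!f_*=f'_*i^!$ holds for any proper $f$, and independence of $y$ follows because $\R^{y_1}_{\leq w}$ is the full preimage of $\R^{y_2}_{\leq w}$ under the smooth map $\tM^{y_1}\to\tM^{y_2}$, so zero-section Gysin maps commute with that smooth pullback.
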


We identify $\HBM{\bbB}{\pt}$ with $S$ and we denote by $\alpha:\End \left( \HBM{\bbB}{\pt}\right) \rightarrow \End (S)$ the resulting isomorphism. The next proposition follows from \cite{KK1}.

\begin{proposition}
\label{affine_H_0_thm}
The assignment
$$\left[ \Fl_{\leq w}\right] \mapsto D_w$$
extends to an $S$-linear algebra isomorphism $\theta : \HBM{\bbB}{\Fl} \xrightarrow{\cong}\H$, such that the following diagram commutes:
    \[\begin{tikzcd}
	\HBM{\bbB}{\Fl} && \End \left( \HBM{\bbB}{\pt}\right) \\
	\\
	\H && \End (S)
	\arrow["\rho"', from=1-1, to=1-3]
	\arrow["\theta", from=1-1, to=3-1]
	\arrow["\alpha", from=1-3, to=3-3]
	\arrow["\rho'",from=3-1, to=3-3]
\end{tikzcd}\]
    Here $\rho'$ is the action of $\H$ on $S$ described in Section \ref{section_algebra}.
\end{proposition}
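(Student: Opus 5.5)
The statement to prove is Proposition \ref{affine_H_0_thm}, the affine analogue of Proposition \ref{finite_H_0_thm}; I would follow the Kostant--Kumar argument of \cite{KK1}, adapted to the extended group $\tW = \Waff \rtimes \Omega$.

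\textbf{Step 1: the module structure.} First I would show that $\HBM{\bbB}{\Fl}$ is a free $S$-module with basis $\{[\Fl_{\leq w}]\}_{w\in\tW}$. Each $\Fl_{\leq w}$ is a finite-dimensional projective variety paved by the $\bbB$-stable affine cells $\Fl_y$, $y\leq w$. As in Lemma \ref{finite_S_mod_basis}, the long exact sequences split into short exact sequences because everything is concentrated in even degrees. Passing to the direct limit over the Bruhat order then gives freeness.

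\textbf{Step 2: the action on $S$.} Next I would compute $\rho$ on generators, using equivariant localization on $\HBM{\bbB}{\pt}\cong S$.
\begin{itemize}
\item For a simple affine reflection $s_i$, the class $[\Fl_{\leq s_i}]$ is a $\P^1$ fibered over the point. Convolution with it is the pull--push $\bbB\backslash\pt \leftarrow \bbB\backslash \P_i/\bbB \rightarrow \P_i\backslash\pt$, where $\P_i$ is the minimal parahoric. The resulting operator on $S$ is the pull--push along $\P^1$, which by the Atiyah--Bott formula with tangent weights $\pm\alpha_i$ equals $\frac{1}{\alpha_i}(1-s_i)$, i.e.\ $D_{s_i}$. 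I would take care here with sign conventions.
\item For $\omega\in\Omega$, the orbit $\Fl_\omega$ is a single $\bbB$-fixed point normalizing $\bbB$. Convolution with it acts by twisting the equivariance by $\omega$, giving $[\omega]$.
\item The $S$-action, by cup product with $H^\bullet_\bbB(\pt)$ on the left factor, acts by multiplication.
\end{itemize}

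\textbf{Step 3: products of basis classes.} If $\ell(ww')=\ell(w)+\ell(w')$, the convolution map $\Fl_{\leq w}\tilde\times\Fl_{\leq w'}\to\Fl_{\leq ww'}$ is birational, so $[\Fl_{\leq w}]\star[\Fl_{\leq w'}]=[\Fl_{\leq ww'}]$. Hence the classes of a reduced word multiply to $[\Fl_{\leq w}]$, and $\rho\bigl([\Fl_{\leq w}]\bigr)$ is the corresponding product of $D_{s_i}$'s and $[\omega]$, namely $D_w$.

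\textbf{Step 4: the isomorphism $\theta$.} Define $\theta$ $S$-linearly by $[\Fl_{\leq w}]\mapsto D_w$. By Step 3, $\alpha\circ\rho=\rho'\circ\theta$ on the basis. By $S$-linearity of $\rho$ in the left factor, this identity holds on the whole module. The action of $\H\subset Q[\tW]$ on $S$ is faithful, since $Q[\tW]$ acts faithfully on $Q$ ($\tW$ acts faithfully on $\th$). Together with $\theta$ being a bijection onto $\H$ (Proposition \ref{tH_description}), this shows $\theta$ is multiplicative. Indeed, $\rho$ is an algebra homomorphism by the module theorem, so
$$\rho'\bigl(\theta(ab)\bigr)=\alpha\rho(a)\,\alpha\rho(b)=\rho'\bigl(\theta(a)\theta(b)\bigr).$$

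\textbf{Main obstacle.} The hardest step is the rank-one computation in Step 2 within the $\HBM{\bbB}{\pt}$-module formalism of diagram (\ref{affine_diagram_0_module}), including the $\Crot$ weights of affine simple roots. I would reduce it to a finite computation over $\Fl_{\leq s_i}\cong\P^1$, using that the $\bbB$-action factors through $\bbB_i$.
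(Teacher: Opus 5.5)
Your proposal is correct and follows essentially the same route as the paper, which simply cites \cite{KK1}: your steps (the $S$-basis of Schubert classes $[\Fl_{\leq w}]$, the rank-one computations for $s_i$ and for $\omega\in\Omega$, additivity of fundamental classes along reduced words in $\tW=\Waff\rtimes\Omega$, and multiplicativity of $\theta$ deduced from faithfulness of $\H$ on $S$) are exactly the Kostant--Kumar argument in this setting. Two details remain to be pinned down. The sign and inverse conventions in the rank-one computations have to be fixed; otherwise you get $[\Fl_{\leq w}]\mapsto(-1)^{\ell(w)}D_w$ rather than $D_w$. Also, faithfulness of $\H$ on $S$ itself does not follow formally from faithfulness of $Q[\tW]$ on $Q$, but the Dedekind--Artin independence argument applied directly on $S$ gives it.
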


Because the action of $\tW$ on $S$ is faithful, the action of $\H$ on $S$ is faithful. So $\rho$ is injective.

Following \cite[Theorem 4.9]{HKW}, we define an action of $\HBM{\bbB}{\R}$ on $\HBM{\bbB}{M}$, that is for every $w\leq y$ in $\tW$ we construct an action of $\HBM{\bbB}{\R^y_{\leq w}}$ on $\HBM{\bbB}{M^y}$, such that these actions commute with the pushforwards and smooth pullbacks used in definitions (\ref{affine_def_H(R_leq_w)}) and (\ref{affine_def_H(R)}). Consider the following version of diagram (9) in \cite{HKW}. It is an extension of our diagram (\ref{affine_diagram_0_module}).

\begin{equation}
\label{affine_diagram_M_module}
\begin{tikzcd}[column sep=large, row sep=large]
\mathcal{R} \times \{0\} \arrow[d] & \bbG \times \{ 0 \} \arrow[l, "\tilde{p}_0"{above}] \arrow[r, "q_0"] \arrow[d] & \bbG \times^{\bbB} \{ 0 \} \arrow[r, "m_0"] \arrow[d] & \{ 0 \} \arrow[d] \\
\mathcal{R} \times M \arrow[d, "i \times \text{id}"{left}] & p_M^{-1} (\R \times M) \arrow[l, "\tilde{p}_M"{above}] \arrow[r, "\tilde{q}_M"] \arrow[d] & \R \arrow[r, "\tilde{m}_M"] \arrow[d, "i"] & M \arrow[d] \\
\tM \times M & \bbG \times M \arrow[l, "p_M"{above}] \arrow[r, "q_M"{above}] & \tM \arrow[r, "m_M"{above}] & V,
\end{tikzcd}
\end{equation}

Here, all vertical arrows are closed embeddings, and the maps on the bottom row are given by:

$$([g, m], m) \mapsfrom (g, m) \mapsto [g, m] \mapsto gm$$

Using the second row of diagram (\ref{affine_diagram_M_module}), define the action of $z\in \HBM{\bbB}{\R}$ on $c \in \HBM{\bbB}{M}$, by:

\begin{equation}
\label{affine_H(R)_on_H(M)_action}
z\star c \coloneqq (\tilde{m}_M)_* (\tilde{q}_M^*)^{-1}({\tilde{p}_M^*})(z \boxtimes c)
\end{equation}

Theorem 4.9 in \cite{HKW} holds in our case with the same proof, showing that this defines an $\HBM{\bbB}{\R}$-module structure on $\HBM{\bbB}{M}$. Recall that we defined an $\HBM{\bbB}{\R}$-module structure on $\HBM{\bbB}{\pt}$ using the same composition of maps for the corresponding maps on the first row of diagram (\ref{affine_diagram_M_module}).

Let $\theta_0 \coloneqq \theta \circ\iota_0: \HBM{\bbB}{\R}\rightarrow \H$.

\begin{lemma}
\label{affine_Image_in_HM}
    $\Im(\theta_0) \subset \H_M$.
\end{lemma}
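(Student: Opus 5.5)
We follow the proof of Lemma \ref{finite_Image_in_HM}, replacing the finite-dimensional Thom isomorphism by its version for the pro-finite-dimensional space $M$. By Lemma \ref{affine_0_map_to_H} and Proposition \ref{affine_H_0_thm} we have $\rho'\circ\theta_0 = \alpha\circ\rho_0$. By Proposition \ref{H_Sigma_is_normalizer}, $\H_M$ consists of the elements $f\in\H$ with $f.(S\Pi_M^{-1})\subset S\Pi_M^{-1}$. Since $\theta_0$ lands in $\H$, it therefore suffices to prove one thing: for every $z\in\HBM{\bbB}{\R}$, the operator $(\alpha\circ\rho_0)(z)$ on $S$, extended $Q$-linearly through the $Q[\tW]$-action, preserves the fractional ideal $S\Pi_M^{-1}$.

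\textbf{Step 1.} The first step is to compare the modules $\HBM{\bbB}{\pt}$ and $\HBM{\bbB}{M}$ via the zero-section map $\varepsilon_*:\HBM{\bbB}{\pt}\to\HBM{\bbB}{M}$. Formally, $\varepsilon_*$ sends $[\pt]$ to ``$\Pi_M\,[M]$''. Because $\Pi_M$ is an infinite product, this does not make sense literally. We handle it at each finite level $y$: the Thom isomorphism for $M^y$ gives $\varepsilon^y_*[\pt] = \Pi_{M^y}[M^y]$, compatibly with the smooth pullbacks $M^{y_1}\to M^{y_2}$ once the shifts $[-2\dim M^y]$ are taken into account.

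\textbf{Step 2.} Next we show that the actions are compatible. For $z\in\HBM{\bbB}{\R_{\le w}}$ the claim is
$$z\star(\Pi_{M^y}[M^y]) = \Pi_{M^y}\,(\rho_0(z)[\pt])$$
read in $\HBM{\bbB}{M^y}$, that is, $\varepsilon_*$ intertwines the two actions. This is the analogue of the commutative square from \cite[Lemma 5.2.23]{CG}. It follows from the first two rows of diagram (\ref{affine_diagram_M_module}): the top row is the base change of the middle row along the zero sections $\{0\}\hookrightarrow M$ and $\bbG\times\{0\}\hookrightarrow p_M^{-1}(\R\times M)$, so proper pushforward and the (smooth or Gysin) pullbacks commute with $\varepsilon_*$. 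This is the same argument as \cite[Proposition 4.15]{HKW}.

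\textbf{Step 3.} We then compute the action of $z$ on the generator $[M]$, namely $z\star[M]=g_z[M]$ for some $g_z\in S$. Since $\HBM{\bbB}{M}$ is free of rank one, this is automatic: $g_z\in S$.

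\textbf{Step 4.} Finally we translate into the algebra. Steps 2 and 3 combine to give
$$\rho_0(z)(\Pi_M\cdot h)=\Pi_M\cdot(\text{element of }S)$$
for $h\in S$, interpreted through the finite ratios $\Pi_M/\Pi_{wM}$. Writing $\theta_0(z)=\sum f_w[w]$, this says that $\sum_w f_w\frac{\Pi_M}{\Pi_{wM}}[w]$ preserves $S$, i.e.\ $\Pi_M\theta_0(z)\Pi_M^{-1}\in\H$. In the language of the $Q[\tW]$-module $Q\Pi_M^{-1}$, this is exactly that $\theta_0(z)$ preserves $S\Pi_M^{-1}$.

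\textbf{Main obstacle.} The hard part is the bookkeeping with infinite products in Step 4. For $z$ supported on $\R_{\le w}$ and $y\ge w$, we must check that the operator obtained at level $M^y$ equals $\Pi_{M^y}\theta_0(z)\Pi_{M^y}^{-1}$ and that this agrees with $\Pi_M\theta_0(z)\Pi_M^{-1}$. The intended argument: only components $[x]$ with $x\le w$ occur in $\theta_0(z)$, and for these $\Pi_M/\Pi_{xM}=\Pi_{M^y}/\Pi_{(xM)^y}$ by the definition of $M^y$. The kernel of $M\to M^y$ is $x$-stable up to the relevant finite pieces, so its weights cancel in the ratio. We would verify this cancellation first on a single orbit $\R_x$ using diagram (\ref{affine_Rw_description}), then pass to closures.
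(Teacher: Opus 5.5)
Your Steps 1--2 are the paper's argument: a level-$y$ zero-section square whose commutativity comes from base change along the first two rows of (\ref{affine_diagram_M_module}). The gap is in how you get from level $y$ back to $\Pi_M$.

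\textbf{The proposed identity is false.} You claim $\Pi_M/\Pi_{xM}=\Pi_{M^y}/x(\Pi_{M^y})$, i.e.\ that the weights of $M_y\coloneqq\ker(M\to M^y)$ cancel against those of $xM_y$. This fails already in the following example.
\begin{itemize}
\item Take $G=T=\C^*$, $V=\K$ with $T$ acting by its identity character $u$, $M=\O$, and $x=\varpi$.
\item Every element of $\tW$ has length $0$, so $\Fl_{\le x}=\Fl_x$. Hence $M_x=\O\cap\varpi^{-1}\O=\O$, so $M^x=0$ and $\Pi_{M^x}=1$.
\item On the other hand, $\Pi_M/\Pi_{xM}=\Pi_{\O}/\Pi_{\varpi\O}=u$.
\item Here $\theta_0([\R_x])=[x]$, and the convolution (\ref{affine_H(R)_on_H(M)_action}) gives $[\R_x]\star[M]=[\varpi\O]=u\,[M]$. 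But $\Pi_{M^x}\,\rho_0([\R_x])[\pt]$, read in $\HBM{\bbB}{M}$, is $[M]$.
\end{itemize}
So the identity in your Step 2 also fails. The most a single-truncation square can give is $\Pi_{M^y}\theta_0(z)\Pi_{M^y}^{-1}\in\H$, which is not what is needed.

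Enlarging $y$ does not help. For $M=N(\O)$ and a translation $t_\lambda$ with $\langle\lambda,\chi\rangle\neq 0$ for some weight $\chi$ of $N$, no finite-codimensional subspace of $M$ has a $t_\lambda$-stable multiset of weights. The reason is that $t_\lambda$ shifts the $\hbar$-degrees in the $\chi$-isotypic part, which is bounded below.

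\textbf{Where the error comes from.} The map $\tilde m_M\colon[g,m]\mapsto gm$ does not preserve the truncation. Over $x\bbB$ with $x\le y$, it sends $M_y$ to $xM_y$, which lies in $M\cap xM$ but is not $M_y$. (For $x=\varpi^{-1}$ above, it sends $M_x=\varpi\O$ onto $\O\not\subset M_x$, so a naive level-$y$ action is not even defined.) Consequently, on the $[x]$-component the output-side Euler class is $\Pi_{M/xM_y}$, not $\Pi_{M^y}$. Writing $\theta_0(z)=\sum_x f_x[x]$, the correct level-$y$ statement is
\[
z\star\bigl(\Pi_{M^y}\,s\,[M]\bigr)=\sum_{x\le w}\Pi_{M/xM_y}\,f_x\,x(s)\,[M],\qquad s\in S .
\]
Moreover, $\Pi_{M/xM_y}/x(\Pi_{M^y})=\Pi_{M/xM_y}/\Pi_{xM/xM_y}$ is exactly the regularized ratio $\Pi_M/\Pi_{xM}$, computed through the common subspace $xM_y\subset M\cap xM$.

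\textbf{How to fix it.} Since the output truncation depends on $x$, argue componentwise. One way is to localize at the $\tT$-fixed points $x\bbB$ using (\ref{affine_Rw_description}). Equivalently, prove $\theta\circ\iota_M=\Pi_M\,\theta_0(-)\,\Pi_M^{-1}$ for the map $\iota_M$ in the remark following the lemma, by comparing the normal Euler classes $\Pi_{xM/(M\cap xM)}$ and $\Pi_{M/(M\cap xM)}$ at $x\bbB$. Since $\theta\iota_M(z)\in\H$, this gives $\theta_0(z)\in\H_M$ directly.

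This route also covers a point your Step 4 passes over. The zero-section square only controls the action on the image of $\varepsilon_*$. In finite type, extending to all of $S$ uses $S^W$-linearity, but here $S^{\tW}$ is only $\C[\hbar]$ (or $\C[\hbar,\epsilon]$), so that argument does not transfer.
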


\begin{proof}
    Let $y\geq w$ in $\tW$. As in the proof of Lemma \ref{finite_Image_in_HM}, it is enough to prove that the following diagram commutes:
    \[\begin{tikzcd}
	\HBM{\bbB}{\R^y_{\leq w}} \otimes \HBM{\bbB}{0} && \HBM{\bbB}{0} \\
	\\
	\HBM{\bbB}{\R^y_{\leq w}} \otimes \HBM{\bbB}{M^y} && \HBM{\bbB}{M^y}&& 
	\arrow["\star"', from=1-1, to=1-3]
	\arrow["\text{id} \otimes \varepsilon_*", from=1-1, to=3-1]
	\arrow["\varepsilon_*", from=1-3, to=3-3]
	\arrow["\star",from=3-1, to=3-3]
\end{tikzcd}\]

Here, $\varepsilon : \{ 0 \}\hookrightarrow M^y$ is the closed embedding of the origin inside the vector space $M^y$, and the horizontal maps are the action maps defined before. The commutativity of the diagram follows from proper base change along the Cartesian squares in the first two rows of diagram (\ref{affine_diagram_M_module}).
\end{proof}

\begin{remark}
    Denote by $\rho_M : \HBM{\bbB}{\R} \rightarrow  \End \left( \HBM{\bbB}{M}\right)$ the action defined in (\ref{affine_H(R)_on_H(M)_action}). Let $p_M:\R \hookrightarrow \Fl\times M$ be the closed embedding which was used to define $\R$ and let $i_M : \Fl \hookrightarrow \Fl \times M$ be the $0$-section embedding. These maps are $\bbB$-equivariant, where $\Fl \times M$ is equipped with the diagonal $\bbB$-action. Let $\iota_M\coloneqq (i_M)^* \circ (p_M)_* : \HBM{\bbB}{\R} \rightarrow \HBM{\bbB}{\Fl}$. Following the proof of Lemma \ref{affine_0_map_to_H}, it can be shown that the following diagram commutes
    \[\begin{tikzcd}
	\HBM{\bbB}{\R}  &&   \End \left( \HBM{\bbB}{M}\right)\\
	\\
	\HBM{\bbB}{\Fl} &&  \End \left( \HBM{\bbB}{0}\right)
	\arrow["\rho_M"', from=1-1, to=1-3]
	\arrow["\iota_M", from=1-1, to=3-1]
	\arrow[from=1-3, to=3-3]
	\arrow["\rho",from=3-1, to=3-3]
\end{tikzcd}\]
Here, the right vertical arrow is the isomorphism induced by the Thom isomorphism, i.e. $\HBM{\bbB}{M} \cong \HBM{\bbB}{0}$. This is the affine analogue of Lemma 5.4.35 in \cite{CG}, while Lemma \ref{affine_0_map_to_H} is the affine analogue of Lemma \ref{finite_map_to_H0}. We obtain two embeddings $\iota_0, \iota_M : \HBM{\bbB}{\R} \rightarrow \HBM{\bbB}{\Fl}$. The proof of Lemma \ref{affine_Image_in_HM} shows that they differ by conjugation by $\Pi_M$ in $\HBM{\bbB}{\Fl} \cong \H$. Theorem \ref{affine_H(R)=H_M} shows that, if the pair $(V,M)$ is gluable, then 
$$\HBM{\bbB}{\R} \cong \H \times_{Q[\tW]} \H \coloneqq \{ (a,b) \in \H \times \H \mid  a = \Pi_M^{-1} b \Pi_M \} \cong \H \cap \Pi_M^{-1} \H \Pi_M$$

\noindent where the two embeddings $\H \hookrightarrow Q[\tW]$ are given by localization and its conjugate by $\Pi_M$, and the maps $\HBM{\bbB}{\R} \rightarrow \H$ are given by $\iota_0$ and $\iota_M$. This is an analogue of the morphism (2.6) and Theorem 2.8 in \cite{GW}.
\end{remark}

\begin{lemma}
\label{affine_leading_term_RW}
    The action of $\theta_0$ on fundamental classes is given by:
    $$\theta_0\left(\left[\overline{\R_w}\right]\right) = \frac{\Pi_{wM}}{\Pi_{M\cap wM}} D_w + \sum_{y< w} b_{w,y} D_y$$
    for some $b_{w,y} \in S$.
    In particular, $\theta_0$ is injective.
\end{lemma}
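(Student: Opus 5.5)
We follow the proof of Lemma \ref{finite_leading_term_ZW}, working at a finite level $\R^y_{\leq w}$ for some $y \geq w$ and $i$ large, and restricting to the open Bruhat cell. Set $Y_w \coloneqq \Fl_w \subset \Fl_{\leq w}$, with embeddings $Y_w \xhookrightarrow{j} \Fl_{\leq w} \xhookleftarrow{k} \Fl_{<w}$. The cells are affine spaces and $\Fl_{\leq w}$ is paved by them. Hence the long exact sequence in $\bbB$-equivariant Borel--Moore homology splits into short exact sequences $0 \to \HBM{\bbB}{\Fl_{<w}} \to \HBM{\bbB}{\Fl_{\leq w}} \xrightarrow{j^*} \HBM{\bbB}{Y_w} \to 0$. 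Moreover $\HBM{\bbB}{Y_w} = S.[Y_w]$ is free of rank one.

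By Proposition \ref{affine_H_0_thm}, $\theta$ sends $[\Fl_{\leq v}]$ to $D_v$. The classes $[\Fl_{\leq v}]$ with $v<w$ span $\HBM{\bbB}{\Fl_{<w}}$ over $S$. It therefore suffices to show
$$j^*\iota_0([\overline{\R_w}]) = \frac{\Pi_{wM}}{\Pi_{M\cap wM}}[Y_w].$$
The map $\iota_0 = i_0^*(p_0)_*$ is compatible with restriction to the open set over $Y_w$: both the proper pushforward and the Gysin pullback along the zero section commute with open restriction. So we may compute over $Y_w$. There $\R_w \cong \bbB\times^{\bbB\cap w\bbB w^{-1}}(M\cap wM)$ by (\ref{affine_Rw_description}), and the restriction of $\tM$ to $Y_w$ is $\bbB\times^{\bbB\cap w\bbB w^{-1}} wM$. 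Under these identifications, $p_0$ restricts to the inclusion of the subbundle with fibre $M\cap wM$ into the bundle with fibre $wM$.

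To make this finite-dimensional, we pass to the quotients by $M\cap \bigcap g^{-1}M$ used in (\ref{affine_def_H(R_leq_w)}). Over $Y_w$ the relevant fibres become $(M\cap wM)/K$ and $wM/K$ for a common subspace $K$ of finite codimension in both, so the normalizing shifts by $\dim M^y$ cancel consistently. We then apply the Thom isomorphism: for a subbundle $E'\subset E$ over $Y_w$, the composite $i^*[E']$ equals $e(E/E')\cap[Y_w]$. The equivariant Euler class of $E/E'$, computed via induction from $\bbB\cap w\bbB w^{-1}$ down to $\tT$, is the product of the weights of $wM/(M\cap wM)$, namely $\Pi_{wM}/\Pi_{M\cap wM}$. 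This product lies in $S$ because $wM/(M\cap wM)$ is finite dimensional. This gives the leading term. Because the $[\Fl_{\leq v}]$ form an $S$-basis, the lower coefficients $b_{w,v}$ lie in $S$.

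\textbf{Main obstacle.} The hardest step is checking that the renormalized definitions (\ref{affine_def_H(R_leq_w)}) are compatible with $\iota_0$ and with the Thom computation. Concretely, $\tM$ is infinite rank, so $i_0^*$ must be defined through the finite quotients $\tM^y$. One must verify that the smooth pullbacks and degree shifts identify the Gysin pullback for $\tM^y$ with a Thom-class computation whose Euler class involves only the finite quotient $wM/(M\cap wM)$. I would handle this by choosing $y$ large enough that $K$ is contained in $M\cap wM$, then checking that the kernel bundles cancel in the excess intersection formula.

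\textbf{Injectivity.} Once the leading-term formula holds, injectivity of $\theta_0$ follows as in the finite case. Each leading coefficient $\Pi_{wM}/\Pi_{M\cap wM}$ is nonzero. The $D_w$ are $Q$-linearly independent by Proposition \ref{tH_description}. The classes $[\overline{\R_w}]$ form an $S$-basis of $\HBM{\bbB}{\R}$. Taking a Bruhat-maximal $w$ appearing in a nonzero combination shows that its image under $\theta_0$ is nonzero.
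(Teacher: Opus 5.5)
Your proposal is correct and follows essentially the paper's route. The paper's proof simply says to replace $\overline{\R_w}$ by its finite-level version $\overline{\R^z_w}$ for $z \geq w$ and repeat the finite-dimensional argument, which is exactly what you do. That argument splits the sequence for $\Fl_{<w} \subset \Fl_{\leq w} \supset \Fl_w$ and restricts to the open cell. There the Thom isomorphism, applied to the subbundle with fibre $M \cap wM$ inside the bundle with fibre $wM$, gives the leading coefficient $\Pi_{wM}/\Pi_{M\cap wM}$, and injectivity then follows from the leading terms.

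The containment you flag in your ``main obstacle'' is automatic. For any $y \geq w$ one has $\dot{w}\bbB \in \Fl_{\leq y}$, so the kernel of $M \twoheadrightarrow M^y$ lies in $M \cap w^{-1}M$, and its $w$-translate lies in $M \cap wM$.
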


\begin{proof}
    After replacing $\overline{\R_w}$ with $\overline{\R^z_w}$ for $z\geq w$ in $\tW$, the proof is similar to that of Lemma \ref{finite_leading_term_ZW}.
\end{proof}

\begin{proof}[Proof of Theorem \ref{affine_H(R)=H_M}]
    The proof is similar to that of Theorem \ref{finite_H(Z)=H_M}, replacing the lemmas for a finite dimensional group with their affine analogues.
\end{proof}

\printbibliography

\end{document}